\documentclass[11pt]{article}
\usepackage{amsmath,amssymb,textcomp,stmaryrd,xifthen,psfrag,graphicx,color}
\usepackage{amsfonts,amsthm}
\usepackage{color}
\usepackage{enumerate}

\SetSymbolFont{stmry}{bold}{U}{stmry}{m}{n} 
\usepackage[T1]{fontenc}
\date{}

\oddsidemargin 0.5cm
\textwidth     16cm
\textheight    20cm
\setlength\parindent{0pt}

\newcommand{\eremk}{\hbox{}\hfill\rule{0.8ex}{0.8ex}}

\newtheorem{theorem}{Theorem}[section]
\newtheorem{lemma}[theorem]{Lemma}
\newtheorem{cor}[theorem]{Corollary}
\newtheorem{proposition}[theorem]{Proposition}
\theoremstyle{definition} \newtheorem{definition}[theorem]{Definition}
\newtheorem{remark}[theorem]{Remark}


\newcommand{\vdual}[2]{(#1\hspace*{.5mm},#2)}
\newcommand{\abs}[1]{\vert #1 \vert}
\newcommand{\norm}[3][]{#1\|#2#1\|_{#3}}
\newcommand{\snorm}[2]{|#1|_{#2}}
\newcommand{\enorm}[2][]{#1|\hspace*{-.3mm}#1|\hspace*{-.3mm}#1|#2#1|\hspace*{-.3mm}#1|\hspace*{-.3mm}#1|}

\newcommand{\diam}{\mathrm{diam}}
\newcommand{\wilde}{\widetilde}
\newcommand{\wat}{\widehat}
\newcommand{\dist}{{\rm dist}}

\def\div{{\rm div}}

\newcommand{\trace}{ \operatorname*{tr}}

\newcommand{\supp}{{\rm supp}}

\DeclareMathOperator*{\argmin}{arg\,min}

\newcommand{\cA}{\ensuremath{\mathcal{A}}}
\newcommand{\cL}{\ensuremath{\mathcal{L}}}

\newcommand{\R}{\ensuremath{\mathbb{R}}}
\newcommand{\N}{\ensuremath{\mathbb{N}}}

\newcommand{\dD}{\ensuremath{\mathcal{D}}}
\newcommand{\hH}{\ensuremath{\mathcal{H}}}
\newcommand{\eE}{\ensuremath{\mathcal{E}}}

\newcommand{\Ii}{\ensuremath{\mathcal{I}}}
\newcommand{\lL}{\ensuremath{\mathcal{L}}}
\newcommand{\kK}{\ensuremath{\mathcal{K}}}

\newcommand{\bb}{\ensuremath{\mathbf{b}}}

\newcommand{\ww}{\ensuremath{\boldsymbol{w}}}
\newcommand{\tT}{\ensuremath{\mathcal{T}}}

\newcommand{\sS}{\ensuremath{\mathcal{S}}}

\newcommand{\fu}{\ensuremath{\mathbf{u}}}

\newcommand{\fw}{\ensuremath{\mathbf{w}}}

\newcommand{\vV}{\ensuremath{\mathbf{v}}}

\newcommand{\xx}{\ensuremath{\mathbf{x}}}

\newcommand{\zz}{\ensuremath{\mathbf{z}}}

\newcommand{\Af}{\ensuremath{\mathbf{A}}}
\newcommand{\Bf}{\ensuremath{\mathbf{B}}}
\newcommand{\Xf}{\ensuremath{\mathbf{X}}}
\newcommand{\Yf}{\ensuremath{\mathbf{Y}}}
\newcommand{\If}{\ensuremath{\mathbf{I}}}

\newcommand{\BL}{\ensuremath{\mathcal{B}}}
\newcommand{\calH}{\ensuremath{\mathcal{H}}}



\newif\iftechreport
\techreportfalse
\iftechreport
  \typeout{======================================}
  \typeout{==      making extended version    ===}
  \typeout{======================================}
\else
  \typeout{======================================}
  \typeout{==           making paper          ===}
  \typeout{======================================}
\fi

\iftechreport
\title{${\mathcal H}$-matrix approximability of inverses of discretizations of the fractional Laplacian
(extended version)
\thanks{MK was supported by Conicyt Chile through project FONDECYT 1170672.
JMM was supported by the Austrian Science Fund (FWF) project F 65.
}}
\else
\title{${\mathcal H}$-matrix approximability of inverses of discretizations of the fractional Laplacian
\thanks{MK was supported by Conicyt Chile through project FONDECYT 1170672,
JMM was supported by the Austrian Science Fund (FWF) project F 65.
}}
\fi

\author{Michael Karkulik\thanks{Departamento de Matem\'atica, Universidad T\'ecnica Federico Santa Mar\'ia,
  Avenida Espa\~na 1680, Valpara\'iso, Chile,
  \texttt{mkarkulik.mat.utfsm.cl}, email: \texttt{michael.karkulik@usm.cl}}
  \and Jens Markus Melenk\thanks{Institut f\"ur Analysis und Scientific Computing, Technische Universit\"at Wien,
    Wiedner Hauptstrasse 8-10, Wien, Austria, 
email: \texttt{melenk@tuwien.ac.at}}}
\begin{document}
\maketitle
\begin{abstract}
The integral version of the fractional Laplacian on a bounded domain is discretized by a 
Galerkin approximation based on piecewise
linear functions on a quasi-uniform mesh. We show that the inverse of the associated stiffness 
matrix can be approximated by blockwise low-rank matrices at an exponential rate
in the block rank. 

\bigskip
\noindent
{\em Key words}: Hierarchical Matrices, Fractional Laplacian\\
\noindent
{\em AMS Subject Classification}: 65N30, 65F05, 65F30, 65F50
\end{abstract}
\section{Introduction}
Fractional differential operators are non-local operators with 
many applications in science and technology and interesting mathematical
properties; a discussion of some of their features can be found, e.g., in  
\cite{di-nezza-palatucci-valdinoci12}. The nonlocal nature of such operators
implies for numerical discretizations that the resulting 
system matrices are fully populated. Efficient matrix compression techniques 
are therefore necessary. Various data-sparse representations of 
(discretizations) of classical integral operators have been proposed in the 
past. We mention techniques based on 
multipole expansions, 
panel clustering, 
wavelet compression techniques,
the mosaic-skeleton method, 
the adaptive cross approximation (ACA) method, 
and the hybrid cross approximation (HCA); 
we refer to \cite{faustmann-melenk-praetorius15b} for a more detailed
literature review in the  context of classical boundary element methods (BEM). 
In fact, many of these data-sparse methods 
may be understood as specific incarnations of
${\calH}$-matrices, which were introduced
in \cite{Hackbusch99,GrasedyckHackbusch,GrasedyckDissertation,hackbusch15}
as blockwise low-rank matrices. Although many of the above mentioned techniques
were originally developed for applications in BEM
the underlying reason for their success is the 
so-called ``asymptotic smoothness'' of the kernel function, which is given
for a much broader class of problems. We refer 
to \cite{doelz-harbrecht-schwab17} and references therein, where the question
of approximability is discussed for pseudodifferential operators. 
Discretizations of integral versions of the fractional Laplacian such as 
the one considered in the present paper, \eqref{eq:integral-fractional-laplacian}, 
are therefore amenable 
to data-sparse representations with $O(N \log^\beta N)$ complexity, where
$N$ is the matrix size and $\beta \ge 0$. This compressibility has 
recently been observed in \cite{zhao-hu-cai-karniadakis17} and in
\cite{ainsworth-glusa17}, where an analysis and implementation of a 
panel clustering type matrix-vector multiplication for the stiffness matrix is presented. 
It is the purpose of the present paper
to show that also the inverse of the stiffness matrix of a discretization 
of the integral version of the fractional Laplacian can be represented
in the $\calH$-matrix format, using the same underlying block structure 
as employed to compress the stiffness matrix. 

One reason for studying the compressibility of the inverses (or the closely
related question of compressibility of the $LU$-factors) 
are recent developments in fast (approximate) arithmetic for data-sparse 
matrix formats. For example, $\calH$-matrices come with 
an (approximate) arithmetic with log-linear complexity, which includes, 
in particular, the (approximate) inversion and factorization of matrices. 
These (approximate)
inverses/factors could either be used as direct solvers or 
as preconditioners, as advocated, for example, in a BEM context in  
\cite{bebendorf05,Grasedyck05,Grasedyck08,leborne-grasedyck06,grasedyck-kriemann-leborne08} and in \cite{li-mao-song-wang-karniadakis18} in the context
of fractional differential equations. 
We point out that the class of $\calH$-matrices is not the only one for which
inversion and factorizations algorithms have been devised. 
Related to ${\calH}$-matrices and its arithmetic are
``hierarchically semiseparable matrices'',
\cite{xia13,xia-chandrasekaran-gu-li09,li-gu-wu-xia12} and the idea of
``recursive skeletonization'',
\cite{ho-greengard12,greengard-gueyffier-martinsson-rokhlin09,ho-ying13};
for discretizations of PDEs, we mention \cite{ho-ying13,gillman-martinsson13,schmitz-ying12,martinsson09}, and
particular applications to boundary integral
equations are \cite{martinsson-rokhlin05,corona-martinsson-zorin13,ho-ying13a}.


The underlying structure of our proof is similar to that in 
\cite{faustmann-melenk-praetorius15b,faustmann-melenk-praetorius15c}
for the classical single layer and hypersingular operators of BEM. There, 
it is exploited that these operators are traces of potentials, i.e., 
they are related to functions that solve an elliptic PDE. The connection 
of \cite{faustmann-melenk-praetorius15b,faustmann-melenk-praetorius15c}
with the present article is given by the works 
of \cite{CaffarelliS_07, stinga-torrea10,caffarelli-stinga16}, which show 
that fractional powers of certain elliptic operators posed in $\R^d$ can be realized 
as the Dirichlet-to-Neumann maps for (degenerate) PDEs posed in $\R^{d+1}$. 

\subsection{The fractional Laplacian and the Caffarelli-Silvestre extension}
In this section, we briefly introduce the fractional Laplacian; the discussion will
remain somewhat formal as the pertinent function spaces (e.g., $H^s_0(\R^d; \Omega)$) 
and lifting operators (e.g., $\cL$) will be defined in subsequent sections. 

For $s \in (0,1)$, the fractional Laplacian in full space
$\R^d$ is classically defined through the Fourier transform, 
$(-\Delta u)^{s}:= {\mathcal F}^{-1} \left(|\xi|^{2s} {\mathcal F}(u)\right)$. 
As discussed in the survey \cite{kwasnicki17}, several equivalent definitions 
are available. For example, for suitable $u$, a pointwise
characterization is given in terms of a principal value integral: 
$$
(-\Delta u)^s(x) = C(d,s)\, \text{P.V.} \int_{\R^d} \frac{u(x) - u(y)}{|x - y|^{d+2s}}\,dy, 
\qquad 
C(d,s):= 2^{2s}s \frac{\Gamma(s+d/2)}{\pi^{d/2}\Gamma(1-s)}. 
$$
Caffarelli and Silvestre \cite{CaffarelliS_07} characterized this operator 
as the Dirichlet-to-Neumann operator of a (degenerate) elliptic PDE. That is,
they proved 
\begin{equation}
\label{eq:CS} 
C(d,s) (-\Delta u)^s (x) = - \lim_{x_{d+1} \rightarrow 0+} 
x_{d+1}^{1-2s} \partial_{x_{d+1}} (\cL u)(x,x_{d+1}), 
\qquad x \in \R^d, 
\end{equation}
where the \emph{extension} $\cL u$ is a function on the half-space 
$\R^{d+1}_+:= \{(x,x_{d+1})\,|\, x \in \R^d, x_{d+1} > 0\}$ and solves 
\begin{equation}
\label{eq:lifting} 
  \operatorname*{div} (x_{d+1}^{1-2s} \nabla \lL u)  = 0  \qquad 
\mbox{ in $\R^{d+1}_+$}, 
\qquad \trace \cL u = u. 
\end{equation}
In weak form, the combination of \eqref{eq:CS} and \eqref{eq:lifting} 
therefore yields 
\begin{equation}
\int_{\R^{d+1}_+} x_{d+1}^{1-2s} \nabla \cL u \cdot \nabla \cL v = 
C(d,s) \int_{\R^d} (-\Delta u)^s v \qquad \forall v \in C^\infty_0(\R^{d+1}). 
\end{equation}
For suitable $u$, $v$, we also have 
\begin{align}
\label{eq:def-fractional-a}
\int_{\R^{d+1}_+} x_{d+1}^{1-2s} \nabla \cL u \cdot \nabla \cL v &= 
C(d,s) \int_{\R^d} (-\Delta u)^s v  \\
\label{eq:def-fractional-b}
& = 
\frac{C(d,s)}{2} \int_{\R^d \times \R^d} \frac{(u(x) - u(y))(v(x) - v(y))}{|x - y|^{d+2s}}\,dx\,dy, 
\end{align}
which is a form that is amenable to Galerkin discretizations. 

The fractional Laplacian on a \emph{bounded} domain $\Omega \subset \R^d$ 
can be defined in one of several \emph{non-equivalent} ways. We consider the 
\emph{integral fractional Laplacian} with the exterior ``boundary'' condition 
$u \equiv 0$ in $\Omega^c$, which reads, 
cf., e.g., the discussions in 
\cite{AcostaB_17,ainsworth-et-al18}
\begin{equation}
\label{eq:integral-fractional-laplacian}
(-\Delta u)_I^s(x) = C(d,s) \text{P.V.} \int_{\R^d} \frac{u(x) - u(y)}{|x - y|^{d+2s}}\,dy, 
\qquad x \in \Omega
\end{equation}
and the understanding that $u = 0$ on $\Omega^c$. Important  
for the further developments is that this version of the fractional Laplacian
still admits the interpretation \eqref{eq:CS} as a Dirichlet-to-Neumann map 
for arguments $u \in H^s_0(\R^d;\Omega)$  
(see \eqref{eq:def-Hs0} ahead). 
In particular, for $u$, $v \in H^s_0(\R^d;\Omega)$ 
the representations \eqref{eq:def-fractional-a} and \eqref{eq:def-fractional-b}
are both valid. 

\subsection{Notation}
Let $\R^{d+1}_+ = \R^d\times (0,\infty)$ be the upper half-space. 
We will identify its boundary $\R^{d} \times \{0\}$ with $\R^d$. 
More generally, if necessary, we will identify subsets $\omega \subset \R^d$ 
with $\omega \times \{0\} \subset \R^{d+1}$. 
For measurable subset $M$ of $\R^d$,
we will use standard Lebesgue and Sobolev spaces $L^2(M)$ and $H^1(M)$.
Sobolev spaces of non-integer order $s\in(0,1)$ are defined via the Sobolev-Slobodecki norms
\begin{align*}
  \norm{u}{H^s(M)}^2 = \norm{u}{L^2(M)}^2 + \snorm{u}{H^s(M)}^2 = 
  \norm{u}{L^2(M)}^2 + \int_{M}\int_{M} \frac{\abs{u(x)-u(y)}^2}{\abs{x-y}^{d+2s}}\,dxdy.
\end{align*}
We will work in particular with the Hilbert space
\begin{align}
\label{eq:def-Hs0}
  H^s_0(\R^d;\Omega) := \left\{ u\in H^s(\R^d)\mid u\equiv 0 \text{ on } \R^d\setminus\overline\Omega \right\}.
\end{align}
\section{Main results}
\subsection{Model problem and discretization}
For a polyhedral Lipschitz domain $\Omega\subset\R^d$ 
and $s\in(0,1)$, we are interested in calculating
the trace $u$ on $\Omega \subset \R^d$ of a function $\fu$ defined 
on $\R^{d+1}_+$, where $\fu$ solves 
\begin{align}\label{eq:dirichlet}
  \begin{split}
  -\div \left(  x_{d+1}^{1-2s} \nabla \fu \right) &= 0 \quad\text{ in } \R^{d+1}_+,\\
  - \lim_{x_{d+1} \rightarrow 0+} x_{d+1}^{1-2s} \partial_{x_{d+1}} \fu &= f \quad\text{ on } \Omega,\\
  \fu &= 0 \quad\text{ on } \R^d\setminus\Omega.
  \end{split}
\end{align}
Our variational formulation of~\eqref{eq:dirichlet} is based on the spaces 
$H^s_0(\R^d;\Omega)$: 
Find $u\in H^s_0(\R^d;\Omega)$ such that
\begin{align}\label{eq:dirichlet:var}
  \int_{\R^{d+1}_+} x_{d+1}^{1-2s} \nabla \lL u \cdot \nabla \lL v\,dx = \int_\Omega f v\,dx
  \quad\text{ for all } v\in H^s_0(\R^d;\Omega).
\end{align}
Here, $\lL$ is the \textit{harmonic extension} operator associated with the PDE given in~\eqref{eq:dirichlet}. It has already appeared in 
\eqref{eq:lifting} and is formally defined in \eqref{eq:def-lL}.
We will show in Section~\ref{sec:cont} ahead that the left-hand side of the above equation introduces a bounded and elliptic bilinear form.
Hence, the Lax-Milgram Lemma proves that the variational formulation~\eqref{eq:dirichlet:var} is well-posed.
Given a quasiuniform mesh $\tT_h$ on $\Omega$ with mesh width $h$, we discretize problem \eqref{eq:dirichlet:var} using
the conforming finite element space
\begin{align*}
  \sS^1_0(\tT_h) := \left\{ u\in C(\R^d) \mid 
\operatorname{supp} u \subset \overline\Omega \mbox{ and } 
u|_K \in {\mathcal P}_1\forall K \in \tT_h\right\} \subset H^s_0(\R^d;\Omega),
\end{align*}
where ${\mathcal P}_1$ denote the space of polynomials of degree $1$. 
We emphasize that $\sS^1_0(\tT_h)$ is the ``standard'' space of 
piecewise linear functions on $\Omega$ that are extended by zero outside 
$\Omega$.
Obviously, there is a unique solution $u_h\in \sS^1_0(\tT_h)$ of the linear system
\begin{align}\label{eq:discrete}
  \int_{\R^{d+1}_+} x_{d+1}^{1-2s} \nabla \lL u_h \cdot \nabla \lL v_h\,dx = \int_\Omega f v_h\,dx
  \quad\text{ for all } v_h\in\sS^1_0(\tT_h).
\end{align}
If we consider the nodal basis $(\psi_j)_{j=1}^N$ of $\sS^1_0(\tT_h)$, we can write equation~\eqref{eq:discrete}
as
\begin{align*}
  \Af \xx = \bb.
\end{align*}
Our goal is to derive an $\hH$-matrix representation of the inverse $\Af^{-1}$.
\begin{remark}
Computationally, the bilinear form \eqref{eq:discrete} is 
not easily accessible. One possibility is to 
employ \eqref{eq:def-fractional-b}. For this representation of the bilinear
form, the entries of the stiffness matrix $\Af$ can be computed, 
\cite{AcostaBB_17,ainsworth-glusa17}. 
\eremk
\end{remark}
\subsection{Blockwise low-rank approximation}
Let us introduce the necessary notation.
Let $\Ii = \left\{ 1,\dots,N \right\}$ be the set of indices of the nodal basis $(\psi_j)_{j=1}^N$ of $\sS^1_0(\tT_h)$.
A \textit{cluster} $\tau$ is a subset of $\Ii$. For a cluster $\tau$, we say that $B_{R_\tau}^0\subset\R^d$ is a
\textit{bounding box} if
\begin{itemize}
  \item[(i)] $B_{R_\tau}^0$ is a hyper cube with side length $R_\tau$,
  \item[(ii)] $\supp(\psi_j)\subset B_{R_\tau}^0$ for all $j\in\tau$.
\end{itemize}
For an \textit{admissibility parameter} $\eta>0$, a pair of cluster $(\tau,\sigma)$ is called $\eta$-admissible, if
there exist bounding boxes $B_{R_\tau}^0$ of $\tau$ and $B_{R_\sigma}^0$ of $\sigma$ such that
\begin{align}
\label{eq:admissiblity}
  \max \left\{ \diam(B_{R_\tau}^0), \diam(B_{R_\sigma}^0) \right\} \leq \eta\, \dist \left( B_{R_\tau}^0, B_{R_\sigma}^0 \right).
\end{align}
The next theorem is the first main result of this work. 
For two admissible clusters, the associated matrix block
of the inverse $\Af^{-1}$ of the matrix associated to the linear system of problem~\eqref{eq:dirichlet}
can be approximated by low-rank matrices with an error that is exponentially small in the rank.
\begin{theorem}\label{thm:approx:blocks}
  Let $\eta>0$ be a fixed admissibility parameter and $q\in(0,1)$. Let 
  $(\tau,\sigma)$ be a cluster pair with $\eta$-admissible bounding boxes.
  Then, for each $k\in\N$, there exist matrices $\Xf_{\tau\sigma}\in\R^{\abs{\tau}\times r}$ and $\Yf_{\tau\sigma}\in\R^{\abs{\sigma}\times r}$
  with rank $r\leq C_{\rm dim}(2+\eta)^{d+1}q^{-(d+1)}k^{d+2}$ such that
  \begin{align}\label{thm:approx:blocks:eq}
    \norm{\Af^{-1}|_{\tau\times\sigma} - \Xf_{\tau\sigma}\Yf_{\tau\sigma}^{\top}}{2} \leq C_{\rm apx} N^{\frac{1+d}{d}} q^k.
  \end{align}
  The constants $C_{\rm dim}$ and $C_{\rm apx}$ depend only on $d$, $\Omega$, the shape-regularity of $\tT_h$, and on $s$.
\end{theorem}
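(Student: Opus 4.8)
The strategy follows the paradigm of \cite{faustmann-melenk-praetorius15b}: exploit that the discrete solution operator is, up to the Galerkin projection, the trace of a solution of the degenerate elliptic problem \eqref{eq:dirichlet}, and then use a Caccioppoli-type interior regularity estimate together with a low-dimensional approximation of the solution set on the relevant cluster boxes. Concretely, fix an admissible cluster pair $(\tau,\sigma)$ with bounding boxes $B_{R_\tau}^0$, $B_{R_\sigma}^0$. For a coefficient vector $\bb$ supported on the indices in $\sigma$, the corresponding right-hand side functional is supported (in the $\Omega$-variable) in $B_{R_\sigma}^0$, hence the extension $\fu = \lL u_h$ of the discrete solution satisfies the homogeneous degenerate equation $\div(x_{d+1}^{1-2s}\nabla\fu)=0$ in the half-space away from $B_{R_\sigma}^0\times\{0\}$. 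The admissibility condition \eqref{eq:admissiblity} guarantees that, on a cylinder $B_{R_\tau}^0\times(0,H)$ over the target box (with $H$ comparable to $\diam B_{R_\tau}^0$), $\fu$ is ``caloric-like'' with no data, so a Caccioppoli inequality on dyadically nested boxes applies.

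The core technical step is to show that the space of all such extensions $\fu$ restricted to the target cylinder is well-approximated by a space of dimension $r$ as claimed. First I would set up a single-step approximation: given $\fu$ solving the homogeneous equation in a box $B_{(1+\delta)\rho}\times(0,(1+\delta)\rho)$ (a $\delta$-neighborhood of $B_\rho\times(0,\rho)$), the weighted Caccioppoli estimate $\|x_{d+1}^{(1-2s)/2}\nabla\fu\|_{L^2(B_\rho\times(0,\rho))}\le C\delta^{-1}\rho^{-1}\|x_{d+1}^{(1-2s)/2}\fu\|_{L^2(B_{(1+\delta)\rho}\times\cdots)}$ combined with a weighted Poincaré/compactness argument (or an explicit approximation by piecewise polynomials on the coarser box, in the spirit of the scalar elliptic case, now with the Muckenhoupt $A_2$ weight $x_{d+1}^{1-2s}$) gives approximation from a space of fixed finite dimension $m\sim(2+\eta)^{d+1}\delta^{-(d+1)}$ with a fixed contraction factor. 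Iterating this $k$ times over a chain of $k$ nested boxes shrinking from the $\eta$-admissible scale down to the target box yields an approximation of $\fu|_{B_{R_\tau}^0\times\{0\}}$ from a space of dimension $r\lesssim m\cdot k^{d+2}$ (the extra $k$ powers come from the growth of the polynomial degree along the iteration, exactly as in the BEM case) with error $\lesssim q^k\|x_{d+1}^{(1-2s)/2}\fu\|$ on the large box; choosing $\delta\sim q$ fixes $m\sim(2+\eta)^{d+1}q^{-(d+1)}$, giving the stated rank bound $r\le C_{\rm dim}(2+\eta)^{d+1}q^{-(d+1)}k^{d+2}$.

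Finally I would convert the function-space approximation into the matrix statement. Let $\Pi$ be the $L^2(B_{R_\tau}^0\cap\Omega)$-projection (or nodal interpolation) onto the $r$-dimensional approximation space; then the matrix $\Xf_{\tau\sigma}\Yf_{\tau\sigma}^\top$ is defined so that applying it to $\bb$ returns the basis-coefficient vector of $\Pi(u_h)|_\tau$. The approximation error in coefficients is controlled by the $H^s$-error, which is controlled by the weighted $H^1$-error of the extension on the target cylinder via the trace estimate for $\lL$ (to be established in Section~\ref{sec:cont}), which in turn is $\lesssim q^k\|x_{d+1}^{(1-2s)/2}\fu\|_{L^2(\text{large box})}$. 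The latter is bounded, using the stability of the discrete problem and the equivalence of norms on finite-dimensional spaces, by $C\|\bb\|_2$ times an inverse-estimate factor; tracking the mesh-dependence of these finite-dimensional norm equivalences (the $\ell^2$-to-$L^2$ mass-matrix scaling $h^{d/2}$, the stability constant of $\Af^{-1}$, and the $H^s$-inverse inequality) produces the algebraic prefactor, which on a quasiuniform mesh with $N\sim h^{-d}$ becomes $C_{\rm apx}N^{(1+d)/d}$.

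The main obstacle is the weighted single-step approximation lemma: establishing the Caccioppoli inequality and the finite-dimensional approximation result for the degenerate operator $\div(x_{d+1}^{1-2s}\nabla\cdot)$ uniformly down to the boundary $x_{d+1}=0$, where the $A_2$-weight degenerates or blows up depending on $s$. One must work with the correct weighted Sobolev spaces, use that $x_{d+1}^{1-2s}$ is a Muckenhoupt $A_2$-weight so that Poincaré inequalities, Caccioppoli estimates, and polynomial approximation results remain valid with constants independent of the box (after rescaling), and handle the interaction of the cutoff functions with the weight near $x_{d+1}=0$. Everything downstream — the iteration, the rank count, and the passage to matrices — is then bookkeeping analogous to \cite{faustmann-melenk-praetorius15b,faustmann-melenk-praetorius15c}.
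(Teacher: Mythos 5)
Your proposal follows essentially the same route as the paper: use the Caffarelli--Silvestre extension to localize, prove a weighted Caccioppoli inequality on nested boxes (Lemma~\ref{lem:caccioppoli}), establish a single-step low-dimensional approximation result using a quasi-interpolant adapted to the Muckenhoupt $A_2$-weight $x_{d+1}^{1-2s}$ (Lemma~\ref{lem:aprox:1}, using the operator of Nochetto--Ot\'arola--Salgado), iterate $k$ times (Lemma~\ref{lem:aprox:2}), translate to a box-local statement about $u_h$ (Proposition~\ref{prop:main}), and finally convert to a matrix bound via Scott--Zhang-type dual functionals and the $h^{d/2}$ mass-matrix scaling.

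One small inaccuracy worth flagging: the $k^{d+2}$ factor does \emph{not} arise from growth of the polynomial degree along the iteration. The paper (like the BEM references you cite) keeps the approximation degree fixed at $p=1$ and instead shrinks the coarse mesh width to $H\sim \delta R/k$ so that the per-step contraction works uniformly; each of the $k$ steps then contributes a space of dimension $\sim (R/H)^{d+1}\sim(k/q)^{d+1}$, and the sum over $k$ steps yields $k\cdot k^{d+1}q^{-(d+1)}=k^{d+2}q^{-(d+1)}$. Your final rank bound is correct, but the mechanism is coarse-grid refinement, not degree growth. You also do not spell out the discrete-orthogonality constraints built into the space $\hH_{h,0}(\cdot)$, which are what make the Caccioppoli argument valid for the \emph{discrete} harmonic extensions (they allow the test function $\eta^2\fu$ to be traded for its non-interpolated remainder); this is a small but essential ingredient in Lemma~\ref{lem:caccioppoli}.
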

Theorem~\ref{thm:approx:blocks} shows that individual blocks of 
$\Af^{-1}$ can be approximated by low-rank matrices. $\calH$-matrices 
are blockwise low-rank matrices where the blocks are organized 
in a tree structure, which affords the fast arithmetic of $\calH$-matrices. 
The block cluster tree is based on a tree structure for the index set 
$\Ii$, which we described next.  
\begin{definition}[cluster tree]
A \emph{cluster tree} with \emph{leaf size} $n_{\rm leaf} \in \mathbb{N}$ is a binary tree $\mathbb{T}_{\Ii}$ with root $\mathcal{I}$
such that for each cluster $\tau \in \mathbb{T}_{\Ii}$ the following dichotomy holds: either $\tau$ is a leaf of the tree and
$\abs{\tau} \leq n_{\rm leaf}$, or there exist so called sons $\tau'$, $\tau'' \in \mathbb{T}_{\Ii}$, which are disjoint subsets of $\tau$ with
$\tau = \tau' \cup \tau''$. The \emph{level function} ${\rm level}: \mathbb{T}_{\Ii} \rightarrow \mathbb{N}_0$ is inductively defined by
${\rm level}(\Ii) = 0$ and ${\rm level}(\tau') := {\rm level}(\tau) + 1$ for $\tau'$ a son of $\tau$. The \emph{depth} of a cluster tree
is ${\rm depth}(\mathbb{T}_{\Ii}) := \max_{\tau \in \mathbb{T}_{\Ii}}{\rm level}(\tau)$.
\end{definition}

\begin{definition}[far field, near field, and sparsity constant]
A partition $P$ of $\Ii \times \Ii$ is said to be based on the cluster tree $\mathbb{T}_{\Ii}$,
if $P \subset \mathbb{T}_{\Ii}\times\mathbb{T}_{\Ii}$. For such a partition $P$ and fixed
admissibility parameter $\eta > 0$,
we define the \emph{far field} and the \emph{near field} as
\begin{equation}\label{eq:farfield}
P_{\rm far} := \{(\tau,\sigma) \in P \; : \; (\tau,\sigma) \; \text{is $\eta$-admissible}\}, \quad P_{\rm near} := P\backslash P_{\rm far}.
\end{equation}
The \emph{sparsity constant} $C_{\rm sp}$,
introduced in \cite{HackbuschKhoromskij2000a,HackbuschKhoromskij2000b,GrasedyckDissertation}, of such a partition is defined by
\begin{equation}\label{eq:sparsityConstant}
C_{\rm sp} := \max\left\{\max_{\tau \in \mathbb{T}_{\Ii}}\abs{\{\sigma \in \mathbb{T}_{\Ii} \, : \, \tau \times \sigma \in P_{\rm far}\}},
\max_{\sigma \in \mathbb{T}_{\Ii}}\abs{\{\tau \in \mathbb{T}_{\Ii} \, : \, \tau \times \sigma \in P_{\rm far}\}}\right\}.
\end{equation}
\end{definition}

The following Theorem~\ref{th:Happrox} shows that the matrix
$\Af^{-1}$ can be approximated by blockwise rank-$r$ matrices at an exponential rate in the
block rank $r$:

\begin{theorem}\label{th:Happrox}
Fix the admissibility parameter $\eta > 0$.
Let a partition $P$ of ${\Ii} \times {\Ii}$ be based on a cluster tree
$\mathbb{T}_{\Ii}$. Then, there is a
blockwise rank-$r$ matrix $\mathbf{B}_{\mathcal{H}}$ such that
\begin{equation}
\|\Af^{-1} - \mathbf{B}_{\mathcal{H}}\|_2 \leq C_{\rm apx} C_{\rm sp} N^{(d+1)/d} \operatorname*{depth} (\mathbb{T}_{\Ii}) e^{-br^{1/(d+2)}}.
\end{equation}
The constant $C_{\rm apx}$ depends only on $\Omega$, $d$,
the shape regularity of the quasiuniform triangulation $\mathcal{T}_h$, and on s,
while the constant $b>0$ additionally depends on $\eta$.
\end{theorem}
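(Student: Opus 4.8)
The plan is to assemble the global blockwise low-rank approximant $\mathbf{B}_{\mathcal{H}}$ from the individual block approximants supplied by Theorem~\ref{thm:approx:blocks}, and then control the spectral norm of the global error by a standard block-counting argument. Concretely, for each admissible block $(\tau,\sigma) \in P_{\rm far}$ I would set $\mathbf{B}_{\mathcal{H}}|_{\tau\times\sigma} := \Xf_{\tau\sigma}\Yf_{\tau\sigma}^\top$ with the rank-$r$ factors from Theorem~\ref{thm:approx:blocks}, while on the near field $P_{\rm near}$ I would simply take $\mathbf{B}_{\mathcal{H}}|_{\tau\times\sigma} := \Af^{-1}|_{\tau\times\sigma}$, so that the error is supported entirely on the far field. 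The first step is therefore to invert the rank estimate: given the target rank $r$, solve $r \leq C_{\rm dim}(2+\eta)^{d+1} q^{-(d+1)} k^{d+2}$ for the largest admissible $k$, which yields $k \gtrsim (r/((2+\eta)^{d+1}q^{-(d+1)}))^{1/(d+2)} \gtrsim c\, r^{1/(d+2)}$ with a constant depending on $\eta$ and $d$ (here $q\in(0,1)$ is fixed, say $q=1/2$). Substituting this $k$ into the block error bound \eqref{thm:approx:blocks:eq} gives, for every admissible block,
\begin{equation*}
\norm{\Af^{-1}|_{\tau\times\sigma} - \mathbf{B}_{\mathcal{H}}|_{\tau\times\sigma}}{2} \leq C_{\rm apx} N^{(1+d)/d} q^k \leq C_{\rm apx} N^{(1+d)/d} e^{-b r^{1/(d+2)}},
\end{equation*}
where $b = -c\log q > 0$ depends on $\eta$, $d$, and $s$; this is the exponential-in-rank decay we want.

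The second step is to pass from the blockwise bound to the global spectral-norm bound. Here I would invoke the standard $\mathcal{H}$-matrix estimate (see, e.g., \cite{GrasedyckDissertation,hackbusch15}) that for a matrix $E$ whose nonzero blocks form a partition $P$ based on a cluster tree $\mathbb{T}_{\Ii}$, one has
\begin{equation*}
\|E\|_2 \leq C_{\rm sp}\, \operatorname*{depth}(\mathbb{T}_{\Ii})\, \max_{(\tau,\sigma)\in P} \|E|_{\tau\times\sigma}\|_2 .
\end{equation*}
The reasoning behind this is that the blocks can be organized by level, on each level the block-diagonal-type structure together with the sparsity constant $C_{\rm sp}$ controls the norm of the level contribution by $C_{\rm sp}$ times the worst block norm, and summing over the at most $\operatorname*{depth}(\mathbb{T}_{\Ii})+1$ levels gives the stated factor. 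Applying this with $E = \Af^{-1} - \mathbf{B}_{\mathcal{H}}$, whose blocks vanish on $P_{\rm near}$ and are bounded on $P_{\rm far}$ by the displayed block estimate, yields exactly
\begin{equation*}
\|\Af^{-1} - \mathbf{B}_{\mathcal{H}}\|_2 \leq C_{\rm apx} C_{\rm sp} N^{(d+1)/d} \operatorname*{depth}(\mathbb{T}_{\Ii}) e^{-b r^{1/(d+2)}},
\end{equation*}
after renaming constants.

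I do not expect a serious obstacle here: the theorem is essentially a packaging of Theorem~\ref{thm:approx:blocks} plus bookkeeping. The one point that requires a little care is making sure the rank inversion is done cleanly --- i.e., that for \emph{every} $r\in\N$ there is a valid choice of $k\in\N$ (for small $r$ one may need to allow $k=1$ and absorb the resulting $O(1)$ loss into $C_{\rm apx}$), and that the constant $b$ is genuinely independent of $N$ and the particular cluster pair, depending only on the admissibility parameter $\eta$ (through the $(2+\eta)^{d+1}$ factor) and on $d$, $\Omega$, shape-regularity, and $s$. The other mild subtlety is the precise constant in the level-summation lemma, but since the theorem only claims dependence through $C_{\rm sp}$ and $\operatorname*{depth}(\mathbb{T}_{\Ii})$, the standard statement suffices and no sharpening is needed.
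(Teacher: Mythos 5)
Your proposal is correct and follows essentially the same route as the paper: the paper's proof simply cites the standard result that spectral-norm bounds for a block matrix based on a cluster tree follow from the blockwise bounds via the sparsity constant and the tree depth, combined with Theorem~\ref{thm:approx:blocks}, which is exactly the assembly-plus-level-summation argument you spell out. Your additional care about inverting the rank bound to choose $k$ from $r$ is the right (and only) bookkeeping point.
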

\begin{proof}
As it is shown in 
\cite{GrasedyckDissertation}, \cite[Lemma~{6.32}]{hackbusch15}, norm bounds
for a block matrix that is based on a cluster tree can be inferred from
norm bounds for the blocks. This allows one to prove Theorem~\ref{th:Happrox}
based on the results of Theorem~\ref{thm:approx:blocks} 
(see, e.g., the proof of \cite[Thm.~{2}]{FMP_NumerMath_15} for details).
\end{proof}
\begin{remark}
For quasiuniform meshes with $\mathcal{O}(N)$ elements,
typical clustering strategies such as the ``geometric clustering'' described in \cite{hackbusch15}
lead to fairly balanced cluster trees $\mathbb{T}_{\Ii}$ with 
$\operatorname*{depth} \mathbb{T}_{\Ii} = \mathcal{O}(\log N)$ and a sparsity constant $C_{\rm sp}$ 
that is bounded uniformly in $N$.
We refer to~\cite{HackbuschKhoromskij2000a,HackbuschKhoromskij2000b,GrasedyckDissertation,hackbusch15} for the fact that the
memory requirement to store $\mathbf{B}_{\mathcal{H}}$ is $\mathcal{O}\big((r+n_{\rm leaf}) N \log N\big)$.
\eremk
\end{remark}
%
\section{The Beppo-Levi space $\BL^1_\alpha(\R^{d+1}_+$)}\label{sec:cont}
In the present section, we formulate a functional framework for the lifting operator 
$\cL$ of \eqref{eq:lifting}. We will work in the Beppo-Levi space
\begin{align*}
  \BL^1_\alpha(\R^{d+1}_+)
  := \left\{ \fu \in \dD'(\R^{d+1}_+) \mid \nabla \fu \in L^2_\alpha(\R^{d+1}_+) \right\}
\end{align*}
of all distributions $\dD'(\R^{d+1}_+)$ having all first order partial
derivatives in $L^2_\alpha(\R^{d+1}_+)$ for 
\begin{equation}
\label{eq:alpha} \alpha=1-2s \in (-1,1),
\end{equation}
where this last space is defined as the set
of measurable functions $\fu$ such that
\begin{align*}
  \norm{\fu}{L^2_\alpha(\R^{d+1}_+)}^2 = \int_{\R^{d+1}_+} x_{d+1}^\alpha\abs{\fu(x)}^2\,dx < \infty.
\end{align*}
We denote by $L_{\alpha,\rm{bdd}}^2(\R^{d+1}_+)$ the set of functions that are in $L_\alpha^2$
on every bounded subset of $\R^{d+1}_+$.
By $\trace: C^{\infty}(\overline{\R^{d+1}_+})\rightarrow C^{\infty}(\R^d)$ we denote the trace operator
$(\trace\,\fu)(x_1, \dots, x_d) := \fu(x_1, \dots, x_d, 0)$.
The following result, which is an extension to weighted spaces of 
the well-known result~\cite[Cor.~{2.1}]{DenyL_55}, shows that the distributions in $\BL^1_\alpha(\R^{d+1}_+)$
are actually functions. 
Its proof will be given below in Section~\ref{section:tech}.
\begin{lemma}\label{thm:bl}
  For $\alpha\in(-1,1)$ there holds
  $\BL^1_\alpha(\R^{d+1}_+) \subset L_{\alpha,\rm{bdd}}^2(\R^{d+1}_+)$.
Furthermore, for $\alpha \in [0,1)$ one has $\fu \in L^2_{0,{\rm bdd}}(\R^{d+1}_+)$. 
\end{lemma}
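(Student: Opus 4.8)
The plan is to split the assertion into an \emph{interior} part, where the weight $x_{d+1}^\alpha$ is inactive and the statement follows from the classical (unweighted) Deny--Lions lemma, and a \emph{boundary} part, where the weighted integrability of $\fu$ up to $\R^d\times\{0\}$ is recovered from that of $\nabla\fu$ by a one-dimensional argument exploiting the tensor-product structure of the weight.

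First I would handle the interior. For any open $\omega$ whose closure is a compact subset of $\R^{d+1}_+$ the weight $x_{d+1}^\alpha$ is bounded above and below on $\omega$, hence $\nabla\fu\in L^2(\omega)$; applying a local form of the classical result \cite[Cor.~2.1]{DenyL_55} on balls compactly contained in $\R^{d+1}_+$ then shows that every $\fu\in\BL^1_\alpha(\R^{d+1}_+)$ is represented by a function in $L^2_{\rm loc}(\R^{d+1}_+)$; since in addition $\nabla\fu\in L^2_{\rm loc}(\R^{d+1}_+)$, we obtain $\fu\in H^1_{\rm loc}(\R^{d+1}_+)$. Consequently (Fubini for Sobolev functions), for a.e.\ $x'\in\R^d$ the slice $t\mapsto\fu(x',t)$ is locally absolutely continuous on $(0,\infty)$ with weak derivative $\sigma\mapsto\partial_{x_{d+1}}\fu(x',\sigma)$.

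Next I would reduce the global claim to boxes and prove the key one-dimensional estimate. Every bounded $M\subset\R^{d+1}_+$ is contained in a box $Q^+:=(-\rho,\rho)^d\times(0,\rho)$, so it suffices to bound $\norm{\fu}{L^2_\alpha(Q^+)}$, and $\norm{\fu}{L^2(Q^+)}$ when $\alpha\in[0,1)$. Set $I:=(\rho/4,\rho/2)$ and $\langle g\rangle_I:=\abs{I}^{-1}\int_I g$. If $g$ is locally absolutely continuous on $(0,\rho)$ with $\int_0^\rho\abs{g'(\sigma)}^2\sigma^\alpha\,d\sigma<\infty$, then $g'\in L^1(0,\rho)$ by Cauchy--Schwarz (here $\int_0^\rho\sigma^{-\alpha}\,d\sigma<\infty$ because $\alpha<1$), so $g$ extends continuously to $[0,\rho]$, and averaging $g(t)-g(\tau)=\int_\tau^t g'$ over $\tau\in I$ yields
\[
  \abs{g(t)-\langle g\rangle_I}^2\ \le\ \frac{\rho^{1-\alpha}}{1-\alpha}\int_0^\rho\abs{g'(\sigma)}^2\sigma^\alpha\,d\sigma ,\qquad t\in[0,\rho].
\]
By the interior step and Tonelli's theorem, for a.e.\ $x'$ the slice $\fu(x',\cdot)$ meets these hypotheses; writing $h(x'):=\langle\fu(x',\cdot)\rangle_I$, this gives, for a.e.\ $(x',t)\in Q^+$,
\[
  \abs{\fu(x',t)}^2\ \le\ 2\abs{h(x')}^2+\frac{2\rho^{1-\alpha}}{1-\alpha}\int_0^\rho\abs{\partial_{x_{d+1}}\fu(x',\sigma)}^2\sigma^\alpha\,d\sigma .
\]
Then I would integrate in $x'$. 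The anchor $h$ lies in $L^2((-\rho,\rho)^d)$: indeed $\abs{h(x')}^2\le\abs{I}^{-1}\int_I\abs{\fu(x',\sigma)}^2\,d\sigma$, and, the closure of $(-\rho,\rho)^d\times I$ being a compact subset of $\R^{d+1}_+$, the interior step gives $\norm{h}{L^2((-\rho,\rho)^d)}^2\le\abs{I}^{-1}\norm{\fu}{L^2((-\rho,\rho)^d\times I)}^2<\infty$. Multiplying the last display by $t^\alpha$, integrating over $Q^+$, and using $\int_0^\rho t^\alpha\,dt<\infty$ (valid since $\alpha>-1$) then yields $\norm{\fu}{L^2_\alpha(Q^+)}^2\lesssim\norm{h}{L^2((-\rho,\rho)^d)}^2+\norm{\nabla\fu}{L^2_\alpha(Q^+)}^2<\infty$, proving $\BL^1_\alpha(\R^{d+1}_+)\subset L^2_{\alpha,{\rm bdd}}(\R^{d+1}_+)$; integrating the same display against $dt$ instead (so that $\int_0^\rho dt=\rho$) gives the analogous unweighted bound, hence $\fu\in L^2_{0,{\rm bdd}}(\R^{d+1}_+)$ when $\alpha\in[0,1)$.

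The step I expect to demand the most care is the interior one: the Deny--Lions result must be invoked strictly away from $\R^d\times\{0\}$, where the weight is inactive, and one must verify that the resulting $H^1_{\rm loc}$-regularity genuinely licenses the slice-wise manipulations that follow. Everything afterwards is routine and reduces, via Fubini and the product form of $x_{d+1}^\alpha$, to the elementary weighted one-dimensional inequality above, into which the two halves of the constraint $\alpha\in(-1,1)$ enter through the finiteness of $\int_0^\rho t^\alpha\,dt$ and of $\int_0^\rho\sigma^{-\alpha}\,d\sigma$ respectively.
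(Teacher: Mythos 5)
Your proposal is correct, but it takes a genuinely different route from the paper's. You split the argument into an interior part (away from $\R^d\times\{0\}$, where the weight $x_{d+1}^\alpha$ is bounded above and below, so the unweighted Deny--Lions result gives $\fu\in H^1_{\rm loc}(\R^{d+1}_+)$) and a boundary part (a one-dimensional weighted Poincar\'e-type estimate, obtained by anchoring at the average over a middle interval $I=(\rho/4,\rho/2)$, applied to a.e.\ slices of the $H^1_{\rm loc}$ representative via Fubini for Sobolev functions). The paper instead proves the $H^1_{\rm loc}$ regularity from scratch: it defines $\fu_\varepsilon$ by translating $\fu$ away from the boundary by $2\varepsilon e_{d+1}$ and mollifying, establishes $\varepsilon$-uniform weighted gradient bounds (Steps~2--4), infers $\fu\in H^1_{\rm loc}$ from a weak compactness argument (Step~5), and then, for the boundary estimate (Step~6), applies a one-dimensional fundamental-theorem-of-calculus argument to the regularized $\fu_\varepsilon$ anchored at $x_{d+1}=1$ (after a cut-off), passing to the limit $\varepsilon\to0$. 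Your decomposition is leaner: by observing that the weight is innocuous in the interior, you outsource the hard part to the classical unweighted theorem and need no uniform-in-$\varepsilon$ estimates at all; the 1D step is then performed directly on the function rather than on a regularization. The paper's route is longer but self-contained, avoiding the appeal to Deny--Lions and to Fubini-for-Sobolev-functions. Both 1D arguments use the same two finiteness facts $\int_0^\rho\sigma^\alpha\,d\sigma<\infty$ ($\alpha>-1$) and $\int_0^\rho\sigma^{-\alpha}\,d\sigma<\infty$ ($\alpha<1$), and both obtain the unweighted $L^2$-bound for $\alpha\in[0,1)$ by repeating the integration without the weight; the content is the same, only the anchor point and the object being sliced differ.
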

We additionally define the space
\begin{align*}
  \BL^s(\R^d) := \left\{ u\in L^2_{\rm loc}(\R^d) \mid \snorm{u}{H^s(\R^d)}<\infty \right\}.
\end{align*}
From now on, we fix a hypercube $K := K'\times (0,b_{d+1})$, $K' = \prod_{j=1}^d (a_j,b_j)$.
Then, using Lemma~\ref{thm:bl}, one can show that $\BL^1_\alpha(\R^{d+1}_+)$ 
and $\BL^s(\R^d)$ are Hilbert spaces when endowed with the norms
\begin{align}
\label{eq:norm-BL1}
  \norm{\fu}{\BL^1_{\alpha}(\R^{d+1}_+)}^2 := \norm{\fu}{L^2_\alpha(K)}^2 + \norm{\nabla\fu}{L^2_\alpha(\R^{d+1}_+)}^2 \quad\text{ and }\quad
  \norm{u}{\BL^s(\R^d)}^2 := \norm{u}{L^2(K')}^2 + \snorm{u}{H^s(\R^d)}^2.
\end{align}
There holds the following density result, which can be found for bounded domains in~\cite[Thm.~{11.11}]{Kufner_85} even for higher Sobolev regularity.
In the present case of first order regularity and unbounded domains, 
we give a short proof below in Section~\ref{section:tech}.
\begin{lemma}\label{lem:bl:dense}
  For $\alpha\in(-1,1)$ the set 
  $C^\infty(\overline{\R^{d+1}_+})\cap \BL^1_\alpha(\R^{d+1}_+)$ is dense in $\BL^1_\alpha(\R^{d+1}_+)$.
\end{lemma}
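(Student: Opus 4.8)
The plan is to build the smooth approximants by even reflection across the hyperplane $\{x_{d+1}=0\}$ followed by mollification on all of $\R^{d+1}$; the restriction of a mollification on $\R^{d+1}$ to $\overline{\R^{d+1}_+}$ is automatically smooth up to the hyperplane. No truncation at infinity is attempted, and indeed none is possible in general, since $\BL^1_\alpha(\R^{d+1}_+)$ contains non-decaying functions such as the constants, which cannot be approximated by compactly supported ones. Throughout, write $w(x):=\abs{x_{d+1}}^\alpha$ on $\R^{d+1}$.

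\emph{Step 1 (reflection).} Given $\fu\in\BL^1_\alpha(\R^{d+1}_+)$, set $\wilde\fu(x',x_{d+1}):=\fu(x',\abs{x_{d+1}})$. The claim is that $\wilde\fu$ has a distributional gradient on $\R^{d+1}$ equal to the even reflection of $\nabla_{x'}\fu$ in its first $d$ components and the odd reflection of $\partial_{x_{d+1}}\fu$ in the last, so that $\nabla\wilde\fu\in L^2_w(\R^{d+1})$ with $\norm{\nabla\wilde\fu}{L^2_w(\R^{d+1})}^2=2\norm{\nabla\fu}{L^2_\alpha(\R^{d+1}_+)}^2$; moreover $\wilde\fu\in L^2_{w,\mathrm{bdd}}(\R^{d+1})$ by Lemma~\ref{thm:bl}, hence $\wilde\fu\in L^1_{\mathrm{loc}}(\R^{d+1})$ since $w$ is locally integrable ($\alpha>-1$). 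The identity for the tangential components is immediate. For the $x_{d+1}$-derivative one tests against $\varphi\in C^\infty_0(\R^{d+1})$ and, after the change of variables $x_{d+1}\mapsto-x_{d+1}$ on the lower half-space, is reduced to proving $\int_{\R^{d+1}_+}\fu\,\partial_{x_{d+1}}\chi=-\int_{\R^{d+1}_+}(\partial_{x_{d+1}}\fu)\,\chi$ for the odd test function $\chi(x',t):=\varphi(x',t)-\varphi(x',-t)$, which vanishes on $\{t=0\}$ and satisfies $\abs{\chi(x',t)}\le C\abs{t}$. One multiplies $\chi$ by a cutoff $\zeta_k(t)$ equal to $1$ for $t>2/k$ and to $0$ for $t<1/k$, applies the definition of the weak derivative to the legitimate test function $\chi\zeta_k\in C^\infty_0(\R^{d+1}_+)$, and lets $k\to\infty$; the resulting commutator term is $\le Ck\int_{1/k<t<2/k}\abs{\fu}\,t\le C\int_{1/k<t<2/k}\abs{\fu}\to0$ because $\abs{\chi}\le C t$ and $\fu\in L^1_{\mathrm{loc}}$ up to the boundary. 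The local Lebesgue-integrability of $\fu$ and $\partial_{x_{d+1}}\fu$ near $\{x_{d+1}=0\}$ used here follows from $\int_0^M t^{-\alpha}\,dt<\infty$ (valid exactly because $\alpha<1$) together with Cauchy--Schwarz and Lemma~\ref{thm:bl}.

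\emph{Step 2 (mollification).} Since $t\mapsto\abs{t}^\alpha$ belongs to the Muckenhoupt class $A_2(\R)$ precisely for $\alpha\in(-1,1)$, and a weight depending on a single coordinate inherits $A_p$-membership on $\R^{d+1}$, we have $w\in A_2(\R^{d+1})$; hence the Hardy--Littlewood maximal operator is bounded on $L^2_w(\R^{d+1})$. For a standard mollifier $\rho_\delta\in C^\infty_0(\R^{d+1})$ one has $\abs{\rho_\delta*g}\le C\,Mg$ pointwise, so convolution is bounded on $L^2_w(\R^{d+1})$ uniformly in $\delta$; together with the density of $C^\infty_0(\R^{d+1})$ in $L^2_w(\R^{d+1})$ and dominated convergence on compact sets (where $w$ is integrable) this gives $\rho_\delta*g\to g$ in $L^2_w(\R^{d+1})$ for every $g\in L^2_w(\R^{d+1})$, and likewise in $L^2_{w,\mathrm{bdd}}$. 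Setting $\fw_\delta:=(\rho_\delta*\wilde\fu)|_{\R^{d+1}_+}$ yields $\fw_\delta\in C^\infty(\overline{\R^{d+1}_+})$, and since $\nabla(\rho_\delta*\wilde\fu)=\rho_\delta*\nabla\wilde\fu$, Step~2 gives $\nabla\fw_\delta\to\nabla\fu$ in $L^2_\alpha(\R^{d+1}_+)$, while $\fw_\delta\to\fu$ in $L^2_\alpha(K)$. Thus $\fw_\delta\in C^\infty(\overline{\R^{d+1}_+})\cap\BL^1_\alpha(\R^{d+1}_+)$ and $\fw_\delta\to\fu$ in the norm~\eqref{eq:norm-BL1}, which proves the asserted density.

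The main obstacle is Step~1, namely verifying that the even reflection genuinely belongs to the weighted Beppo--Levi space on all of $\R^{d+1}$ --- equivalently, that the distributional $x_{d+1}$-derivative of $\wilde\fu$ carries no singular part supported on the hyperplane $\{x_{d+1}=0\}$. This is exactly where the hypothesis $\alpha\in(-1,1)$ is used essentially: the upper bound $\alpha<1$ through the integrability $\int_0^M t^{-\alpha}\,dt<\infty$ that controls the boundary commutator, and the lower bound $\alpha>-1$ through the $A_2$-property of $w$ needed in Step~2.
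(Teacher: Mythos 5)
Your proof is correct, but it takes a genuinely different route from the paper's. The paper reduces Lemma~\ref{lem:bl:dense} to the analogous density statement for $H^1_\alpha$ via a locally finite partition of unity, and proves that $H^1_\alpha$ statement (Lemma~\ref{lem:density}) by first citing \cite{GoldshteinU_09} for density of functions smooth in the \emph{open} half-space, then shifting the function away from the boundary (replacing values on the strip $\{x_{d+1}<h\}$ by the trace at height $h$) to obtain continuity up to $\{x_{d+1}=0\}$, and finally mollifying in the interior. You instead reflect $\fu$ evenly across $\{x_{d+1}=0\}$ to a function $\wilde\fu$ on all of $\R^{d+1}$, verify via the cutoff/commutator argument that $\nabla\wilde\fu$ is the even/odd reflection of $\nabla\fu$ with no singular part on the hyperplane, and then mollify in the full space, using that $|x_{d+1}|^\alpha$ is an $A_2(\R^{d+1})$ weight so the mollifiers are uniformly bounded on $L^2_w$. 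Your approach is self-contained (no appeal to an external density theorem and no partition of unity), at the cost of invoking the $A_2$/maximal-function machinery; the paper's argument is more elementary analytically but leans on \cite{GoldshteinU_09}. Both exploit $\alpha\in(-1,1)$: in the paper's proof the bound $\alpha>-1$ enters when sending $h\to 0$ in the shift step, while you use $\alpha<1$ for the boundary commutator and the full range for $A_2$.

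One small slip worth correcting: you justify $\wilde\fu\in L^1_{\mathrm{loc}}(\R^{d+1})$ by ``$w$ is locally integrable ($\alpha>-1$)''; the correct reason, via Cauchy--Schwarz against $\|\wilde\fu\|_{L^2_w}$, is that $w^{-1}=|x_{d+1}|^{-\alpha}$ is locally integrable, which holds because $\alpha<1$. You do state the right condition later when controlling the commutator, so this is a bookkeeping error rather than a gap. Also, the convergence $\fw_\delta\to\fu$ in $L^2_\alpha(K)$ deserves one more line: since $\rho_\delta\star\wilde\fu$ on $K$ only sees $\wilde\fu$ on a $\delta$-neighborhood of $K$, one first truncates $\wilde\fu$ to a fixed bounded neighborhood (where it lies in $L^2_w$ by Lemma~\ref{thm:bl}) before applying the global $L^2_w$-convergence of mollification.
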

The trace operator can be extended to the spaces $\BL^1_\alpha(\R^{d+1}_+)$ as will also be shown below in Section~\ref{section:tech}.
Analogous trace theorems in Sobolev spaces on smooth and bounded domains
are given for $s=1/2$ in~\cite[Prop.~{1.8}]{CabreT_10}, and for $s\in(0,1)\setminus\frac{1}{2}$ in~\cite[Prop.~{2.1}]{CapellaDDS_11}.
\begin{lemma}\label{lem:bl:trace}
  For $\alpha\in(-1,1)$, the trace operator is a bounded linear operator $\trace: \BL^1_\alpha(\R^{d+1}_+) \rightarrow \BL^s(\R^d)$, where $s$ is given by \eqref{eq:alpha}.
\end{lemma}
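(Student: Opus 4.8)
The plan is to first establish the trace estimate on the reference-type cylinder $K = K' \times (0, b_{d+1})$ for smooth functions, and then transfer it to all of $\R^{d+1}_+$ by a covering/translation argument together with the density result of Lemma~\ref{lem:bl:dense}. More precisely, I would begin by taking $\fu \in C^\infty(\overline{\R^{d+1}_+}) \cap \BL^1_\alpha(\R^{d+1}_+)$ and aim to prove
\begin{equation*}
  \snorm{\trace \fu}{H^s(\R^d)}^2 \leq C \norm{\nabla \fu}{L^2_\alpha(\R^{d+1}_+)}^2,
  \qquad
  \norm{\trace \fu}{L^2(K')}^2 \leq C \norm{\fu}{\BL^1_\alpha(\R^{d+1}_+)}^2,
\end{equation*}
with $C$ depending only on $d$, $s$, and $K$. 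The second (local $L^2$) bound is the easier one: writing $\fu(x,0) = \fu(x,x_{d+1}) - \int_0^{x_{d+1}} \partial_{d+1}\fu(x,t)\,dt$, integrating over $x \in K'$ and over $x_{d+1} \in (0,b_{d+1})$, and using a weighted Cauchy--Schwarz inequality (here the condition $\alpha < 1$ is what makes $\int_0^{b_{d+1}} t^{-\alpha}\,dt$ finite) gives the $L^2(K')$ control in terms of $\norm{\fu}{L^2_\alpha(K)}$ and $\norm{\partial_{d+1}\fu}{L^2_\alpha(K)}$.

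The main work is the seminorm bound. The natural route is the classical identity expressing the Gagliardo seminorm of the trace through the harmonic (or weighted-harmonic) extension, i.e. the fact underlying \eqref{eq:def-fractional-a}--\eqref{eq:def-fractional-b}: for the minimal-energy $\alpha$-harmonic extension $E v$ of a function $v$ on $\R^d$ one has $\snorm{v}{H^s(\R^d)}^2 \simeq \norm{\nabla E v}{L^2_\alpha(\R^{d+1}_+)}^2$, and since $Ev$ minimizes the weighted Dirichlet energy among all extensions, $\norm{\nabla E v}{L^2_\alpha}^2 \leq \norm{\nabla \fu}{L^2_\alpha}^2$ for any $\fu$ with $\trace \fu = v$. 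Applying this with $v = \trace\fu$ yields exactly the desired seminorm estimate. To keep the argument self-contained one can instead give a direct Fourier-side computation: extend $\fu$ evenly in $x_{d+1}$ if needed, take the $d$-dimensional Fourier transform in $x$, and estimate $|\wat{v}(\xi)|^2$ in terms of a one-dimensional weighted integral of $|\partial_{d+1}\wat{\fu}(\xi,\cdot)|^2$ and $|\xi|^2 |\wat{\fu}(\xi,\cdot)|^2$; optimizing the cut-off in $x_{d+1}$ at scale $|\xi|^{-1}$ and using $\int_0^\infty t^{\pm\alpha} e^{-ct}\,dt < \infty$ (again $|\alpha|<1$) produces the factor $|\xi|^{2s}$ with $2s = 1-\alpha$, and integration in $\xi$ gives $\snorm{v}{H^s(\R^d)}^2 \lesssim \norm{\nabla\fu}{L^2_\alpha(\R^{d+1}_+)}^2$.

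Once the estimate is proved for smooth $\fu$, I would extend $\trace$ to all of $\BL^1_\alpha(\R^{d+1}_+)$ by density: given $\fu \in \BL^1_\alpha(\R^{d+1}_+)$, Lemma~\ref{lem:bl:dense} provides smooth $\fu_n \to \fu$ in $\BL^1_\alpha$, the estimate shows $(\trace \fu_n)$ is Cauchy in $\BL^s(\R^d)$, and the limit defines $\trace\fu$ independently of the approximating sequence; passing to the limit in the estimate gives boundedness of $\trace : \BL^1_\alpha(\R^{d+1}_+) \to \BL^s(\R^d)$. The main obstacle I anticipate is the seminorm bound: one must handle the degenerate/singular weight $x_{d+1}^\alpha$ carefully (the borderline integrability of $t^{-\alpha}$ and $t^{\alpha}$ near $0$ is what forces $|\alpha|<1$), and one must be careful that the Gagliardo seminorm is a \emph{global} quantity on $\R^d$ while $\norm{\fu}{L^2_\alpha}$ only appears locally in the $\BL^1_\alpha$ norm — this is precisely why only the seminorm, not the full $H^s(\R^d)$ norm, can appear on the left, and why the extension/energy-minimization (or Fourier) argument, rather than a purely local cylinder estimate, is needed for that term.
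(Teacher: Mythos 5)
Your overall structure --- establish the estimate for smooth $\fu$, split it into a local $L^2(K')$ bound and a global seminorm bound, and transfer to all of $\BL^1_\alpha(\R^{d+1}_+)$ via the density result of Lemma~\ref{lem:bl:dense} --- matches the paper's, and your $L^2(K')$ argument (fundamental theorem of calculus plus weighted Cauchy--Schwarz, with $\alpha<1$ making $\int_0^{b_{d+1}} t^{-\alpha}\,dt$ finite) parallels the multiplicative trace inequality of Lemma~\ref{lem:mult:trace}. Where you diverge is the seminorm bound $\snorm{\trace\fu}{H^s(\R^d)} \lesssim \norm{\nabla\fu}{L^2_\alpha(\R^{d+1}_+)}$: the paper proves this in Lemma~\ref{lem:trace} by a direct real-variable argument (insert the midpoint value $\fu(\tfrac{x+y}{2},\tfrac{|x-y|}{2})$ into the Gagliardo difference quotient, pass to polar coordinates $y = x+r\phi$, and apply the one-dimensional weighted Hardy inequality along the ray), whereas you propose either the Caffarelli--Silvestre energy identity or a Fourier computation. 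Both can in principle be made to work, but each carries a caveat. Route (a), invoking $\snorm{v}{H^s}^2 \simeq \norm{\nabla E v}{L^2_\alpha}^2$ for the minimal-energy extension, is circular within the paper's own logical ordering: the existence of the minimizer $\cL$ and the stability estimate~\eqref{eq:1} are established \emph{downstream} of this trace theorem via Corollary~\ref{cor:poincare} and Lemma~\ref{lem:ext}, so one would have to import the explicit Poisson-kernel construction of \cite{CaffarelliS_07} as an external black box rather than use the machinery of Section~\ref{sec:cont}. Route (b) does produce the correct power (the cutoff at scale $|\xi|^{-1}$, together with $\int_0^{|\xi|^{-1}} t^{-\alpha}\,dt \sim |\xi|^{\alpha-1}$, yields $|\xi|^{2s}$ with $2s=1-\alpha$), but in the Beppo--Levi setting $\fu(\cdot,t)$ need not be globally $L^2$ for each fixed $t>0$, so the partial Fourier transform in $x$ must be interpreted distributionally or one must first approximate by compactly supported functions; the paper's polar-coordinate argument avoids this issue entirely. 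In short: same skeleton, different engine for the key seminorm estimate, with a circularity pitfall to watch if you go via route (a).
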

We define the Hilbert space $\BL^1_{\alpha,0}(\R^{d+1}_+) := \ker(\trace)$.
The following Poincar\'e inequality holds on this space. The proof will be given below
in Section~\ref{section:tech}.
\begin{cor}\label{cor:poincare}
  For all $\fu\in\BL^1_{\alpha,0}(\R^{d+1}_+)$, there holds
  \begin{align*}
    \norm{\fu}{\BL^1_{\alpha}(\R^{d+1}_+)} \lesssim \norm{\nabla \fu}{L^2_\alpha(\R^{d+1}_+)}.
  \end{align*}
\end{cor}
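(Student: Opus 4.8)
The plan is to derive the Poincar\'e inequality on $\BL^1_{\alpha,0}(\R^{d+1}_+)$ from the trace theorem (Lemma \ref{lem:bl:trace}) together with the density result (Lemma \ref{lem:bl:dense}). The key observation is that the norm in \eqref{eq:norm-BL1} contains the $L^2_\alpha(K)$-mass only on the \emph{fixed} hypercube $K = K' \times (0,b_{d+1})$, so it suffices to control $\norm{\fu}{L^2_\alpha(K)}$ by $\norm{\nabla\fu}{L^2_\alpha(\R^{d+1}_+)}$ for $\fu \in \BL^1_{\alpha,0}(\R^{d+1}_+)$. By Lemma \ref{lem:bl:dense} it is enough to prove this for $\fu \in C^\infty(\overline{\R^{d+1}_+}) \cap \BL^1_\alpha(\R^{d+1}_+)$ with $\trace\fu = 0$, and then pass to the limit; note that $\trace$ is continuous into $\BL^s(\R^d)$ by Lemma \ref{lem:bl:trace}, so the kernel is closed and the approximating sequence can be arranged to also lie (asymptotically) in $\ker(\trace)$, or one simply subtracts off the trace contribution using a bounded right inverse of $\trace$.

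For a smooth $\fu$ with vanishing trace, I would write, for $(x,x_{d+1}) \in K$,
\begin{align*}
  \fu(x,x_{d+1}) = \int_0^{x_{d+1}} \partial_{x_{d+1}} \fu(x,t)\,dt,
\end{align*}
since $\fu(x,0) = 0$. The main technical point is the weighted one-dimensional Hardy-type estimate: using Cauchy--Schwarz with the weight $t^\alpha$ split as $t^{\alpha/2} \cdot t^{-\alpha/2}$, one gets
\begin{align*}
  \abs{\fu(x,x_{d+1})}^2 \leq \left(\int_0^{x_{d+1}} t^{-\alpha}\,dt\right)\left(\int_0^{x_{d+1}} t^{\alpha}\abs{\partial_{x_{d+1}}\fu(x,t)}^2\,dt\right)
  = \frac{x_{d+1}^{1-\alpha}}{1-\alpha}\int_0^{x_{d+1}} t^{\alpha}\abs{\partial_{x_{d+1}}\fu(x,t)}^2\,dt,
\end{align*}
which is finite precisely because $\alpha \in (-1,1)$ forces $1-\alpha > 0$. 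Multiplying by $x_{d+1}^\alpha$, integrating $x_{d+1}$ over $(0,b_{d+1})$ and $x$ over $K'$, and using Fubini together with $\int_0^{b_{d+1}} x_{d+1}^\alpha \cdot x_{d+1}^{1-\alpha}\,dx_{d+1} = b_{d+1}^2/2 < \infty$, yields
\begin{align*}
  \norm{\fu}{L^2_\alpha(K)}^2 \leq \frac{b_{d+1}^2}{2(1-\alpha)} \int_{K'}\int_0^{b_{d+1}} t^\alpha \abs{\partial_{x_{d+1}}\fu(x,t)}^2\,dt\,dx \leq C(\alpha,b_{d+1})\, \norm{\nabla\fu}{L^2_\alpha(\R^{d+1}_+)}^2.
\end{align*}
Adding $\norm{\nabla\fu}{L^2_\alpha(\R^{d+1}_+)}^2$ to both sides and taking square roots gives $\norm{\fu}{\BL^1_\alpha(\R^{d+1}_+)} \lesssim \norm{\nabla\fu}{L^2_\alpha(\R^{d+1}_+)}$ on the dense subset, and density extends it to all of $\BL^1_{\alpha,0}(\R^{d+1}_+)$.

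The step I expect to be the main obstacle is the density/closure argument at the interface of the two lemmas: a priori, the smooth functions furnished by Lemma \ref{lem:bl:dense} that approximate a given $\fu \in \ker(\trace)$ need not themselves have vanishing trace, only traces converging to $0$ in $\BL^s(\R^d)$. One clean way around this is to prove the estimate directly for smooth $\fu$ \emph{without} assuming zero trace, picking up an extra boundary term $\norm{\trace\fu}{L^2(K')}$ on the right-hand side (the one-dimensional identity then reads $\fu(x,x_{d+1}) = \fu(x,0) + \int_0^{x_{d+1}}\partial_t\fu(x,t)\,dt$, and one uses $(a+b)^2 \le 2a^2 + 2b^2$), so that passing to the limit is automatic; the term $\norm{\trace\fu}{L^2(K')}$ then vanishes in the limit because $\trace\fu \to 0$ in $\BL^s(\R^d)$, whose norm controls $\norm{\cdot}{L^2(K')}$ by definition \eqref{eq:norm-BL1}. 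This avoids any delicate construction of trace-preserving mollifications.
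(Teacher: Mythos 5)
Your proposal is correct and, in particular, the ``clean way around'' you describe at the end---proving the estimate for all smooth $\fu$ with the additional boundary term $\norm{\trace\,\fu}{L^2(K')}$ on the right, then invoking density of $C^\infty(\overline{\R^{d+1}_+})\cap\BL^1_\alpha$ and continuity of $\trace$ to kill that term in the limit---is exactly the paper's proof. The one-dimensional Cauchy--Schwarz estimate with the weight split $t^{\alpha/2}\cdot t^{-\alpha/2}$ is also the same.
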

For a function $u\in H^s(\R^d)$, we define the \textit{minimum norm extension} or \textit{harmonic extension} $\lL u\in \BL^1_\alpha(\R^{d+1}_+)$ as
\begin{align}
\label{eq:def-lL}
  \lL u = \argmin_{\substack{ \fu \in \BL^1_\alpha(\R^{d+1}_+)\\ \trace\,\fu = u}} \norm{\nabla \fu}{L^2_\alpha(\R^{d+1}_+)}.
\end{align}
We can characterize $\lL u$ by
\begin{align}\label{eq:minnorm:var}
  \begin{split}
  \int_{\R^{d+1}_+}x_{d+1}^{1-2s} \nabla \lL u \cdot \nabla \vV\,dx &= 0
  \quad\text{ for all } \vV \in \BL^1_{\alpha,0}(\R^{d+1}_+),\\
  \trace \lL u &= u.
  \end{split}
\end{align}
In view of the previous developments the minimum norm extension exists 
uniquely and satisfies 
\begin{align}\label{eq:1}
  \norm{\lL u}{\BL^1_\alpha(\R^{d+1}_+)} \lesssim \norm{u}{H^s(\R^d)}.
\end{align}
Indeed, the minimum norm extension can be written $\lL u = \eE u + \fu$, where $\eE u$ is the operator
from Lemma~\ref{lem:ext}, and $\fu\in \BL^1_{\alpha,0}(\R^{d+1}_+)$ is given by
\begin{align*}
  \int_{\R^{d+1}_+}x_{d+1}^{1-2s} \nabla \fu \cdot \nabla \vV\,dx = -\int_{\R^{d+1}_+}x_{d+1}^{1-2s} \nabla \eE u \cdot \nabla \vV\,dx
  \quad\text{ for all } \vV \in \BL^1_{\alpha,0}(\R^{d+1}_+).
\end{align*}
This equation is uniquely solvable due to the Lax-Milgram theorem and Corollary~\ref{cor:poincare},
and this also implies the stability~\eqref{eq:1}.
Due to~\eqref{eq:minnorm:var}, we see that a variational form of our
original problem~\eqref{eq:dirichlet} is actually given by~\eqref{eq:dirichlet:var}.
Next, we show that problem~\eqref{eq:dirichlet:var} is well-posed. We mention that ellipticity has already been
shown in~\cite[eq.~(3.7)]{CaffarelliS_07} using Fourier methods.
\begin{lemma}\label{lem:dirichlet:var}
  Problem~\eqref{eq:dirichlet:var} has a unique solution $u\in H^s_0(\R^d;\Omega)$, and
  \begin{align*}
    \norm{u}{H^s(\R^d)} \lesssim \norm{f}{H^{-s}(\Omega)},
  \end{align*}
  where $H^{-s}(\Omega)$ is the dual space of $H^s_0(\R^d;\Omega)$.
\end{lemma}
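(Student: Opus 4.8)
The plan is to deduce the well-posedness of \eqref{eq:dirichlet:var} from the Lax--Milgram lemma, applied to the bilinear form $a(u,v):=\int_{\R^{d+1}_+}x_{d+1}^{1-2s}\,\nabla\lL u\cdot\nabla\lL v\,dx$ and the linear functional $F(v):=\int_\Omega f v\,dx$ on the Hilbert space $H^s_0(\R^d;\Omega)$ equipped with $\norm{\cdot}{H^s(\R^d)}$. First I would note that $a$ is bilinear, which is inherited from the linearity of the minimum norm extension $\lL$ (visible from the splitting $\lL u=\eE u+\fu$ recorded above, both summands being linear in $u$), and that $F$ is a bounded linear functional with $\norm{F}{(H^s_0(\R^d;\Omega))'}=\norm{f}{H^{-s}(\Omega)}$ by the very definition of $H^{-s}(\Omega)$ as the dual space. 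It then remains to verify the boundedness and the ellipticity of $a$.

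Boundedness is immediate: by the Cauchy--Schwarz inequality in $L^2_\alpha(\R^{d+1}_+)$ and the stability estimate \eqref{eq:1},
\[
  |a(u,v)|\le\norm{\nabla\lL u}{L^2_\alpha(\R^{d+1}_+)}\,\norm{\nabla\lL v}{L^2_\alpha(\R^{d+1}_+)}\le\norm{\lL u}{\BL^1_\alpha(\R^{d+1}_+)}\,\norm{\lL v}{\BL^1_\alpha(\R^{d+1}_+)}\lesssim\norm{u}{H^s(\R^d)}\,\norm{v}{H^s(\R^d)}.
\]

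The substantive step, and the one I expect to be the main obstacle, is ellipticity, i.e.\ $a(u,u)\gtrsim\norm{u}{H^s(\R^d)}^2$ for $u\in H^s_0(\R^d;\Omega)$, which I would establish in two stages. For the $H^s$-seminorm I would invoke that for $u\in H^s_0(\R^d;\Omega)$ the representations \eqref{eq:def-fractional-a}--\eqref{eq:def-fractional-b} are valid, so that with $v=u$,
\[
  a(u,u)=\norm{\nabla\lL u}{L^2_\alpha(\R^{d+1}_+)}^2=\frac{C(d,s)}{2}\,\snorm{u}{H^s(\R^d)}^2,\qquad C(d,s)>0.
\]
Making this identity rigorous for all $u\in H^s_0(\R^d;\Omega)$, as opposed to smooth $u$ only, is the one genuinely delicate point; it is here that the density result of Lemma~\ref{lem:bl:dense} and the trace bound of Lemma~\ref{lem:bl:trace} enter, both sides being continuous in the relevant norms. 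In the second stage I would pass from the seminorm to the full norm by a fractional Poincar\'e inequality, exploiting that $u$ vanishes outside the bounded set $\Omega$: fixing a ball $B$ with $\overline\Omega\subset B$ and $|B\setminus\overline\Omega|>0$, and using $u(y)=0$ and $|x-y|\le\diam B$ for $x\in\Omega$ and $y\in B\setminus\overline\Omega$,
\[
  \snorm{u}{H^s(\R^d)}^2\ \ge\ \int_\Omega\int_{B\setminus\overline\Omega}\frac{|u(x)|^2}{|x-y|^{d+2s}}\,dy\,dx\ \ge\ \frac{|B\setminus\overline\Omega|}{(\diam B)^{d+2s}}\,\norm{u}{L^2(\R^d)}^2,
\]
whence $\norm{u}{H^s(\R^d)}^2\lesssim\snorm{u}{H^s(\R^d)}^2$ with a constant depending only on $d$, $s$, and $\Omega$. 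Combining the two stages yields the ellipticity of $a$.

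Finally, the Lax--Milgram lemma then delivers a unique $u\in H^s_0(\R^d;\Omega)$ solving \eqref{eq:dirichlet:var}, together with the a priori bound $\norm{u}{H^s(\R^d)}\lesssim\norm{F}{(H^s_0(\R^d;\Omega))'}=\norm{f}{H^{-s}(\Omega)}$, which is the claimed estimate.
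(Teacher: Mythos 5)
Your proof is correct and reaches the same a priori estimate via Lax--Milgram, but it takes a genuinely different route to ellipticity than the paper. The paper never uses the exact Caffarelli--Silvestre energy identity $a(u,u)=\tfrac{C(d,s)}{2}\snorm{u}{H^s(\R^d)}^2$; instead it combines the fractional Poincar\'e inequality $\norm{u}{L^2(\Omega)}\lesssim\snorm{u}{H^s(\R^d)}$ (cited from \cite[Prop.~2.4]{AcostaB_17}) with the one-sided trace inequality $\snorm{\trace\,\fu}{H^s(\R^d)}\lesssim\norm{\nabla\fu}{L^2_\alpha(\R^{d+1}_+)}$ of Lemma~\ref{lem:trace}, applied to $\fu=\lL u$. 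This gives $\norm{u}{H^s(\R^d)}^2\lesssim\snorm{u}{H^s(\R^d)}^2\lesssim a(u,u)$, and the trace bound is proved self-containedly in Section~\ref{section:tech}, so the paper stays independent of the precise constant in the Caffarelli--Silvestre characterization. Your version instead invokes the exact identity, which the paper states only informally in the introduction and remarks on (``ellipticity has already been shown in \cite{CaffarelliS_07} using Fourier methods'') without re-deriving. Your version is cleaner if one is willing to import that identity wholesale, but it is slightly less self-contained within the paper's framework. Your elementary proof of the fractional Poincar\'e inequality by integrating over $\Omega\times(B\setminus\overline\Omega)$ is correct and is a nice self-contained replacement for the citation.

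One imprecision worth flagging: you say that justifying the identity for all $u\in H^s_0(\R^d;\Omega)$ relies on Lemma~\ref{lem:bl:dense} and Lemma~\ref{lem:bl:trace}, but those are not the right tools. Lemma~\ref{lem:bl:dense} is a density result for $\BL^1_\alpha(\R^{d+1}_+)$, a space on the $(d+1)$-dimensional half-space, not the $d$-dimensional trace space. What you actually need is density of smooth compactly supported functions in $H^s(\R^d)$ (standard), continuity of $u\mapsto a(u,u)$ in $\norm{\cdot}{H^s(\R^d)}$ (which does follow from~\eqref{eq:1}), and validity of the Caffarelli--Silvestre identity for smooth data. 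The lemmas you cite do not supply the first ingredient. This is a small repair, not a fatal gap, but as stated the passage to the limit is not justified by the cited results.
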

\begin{proof}
  Due to~\cite[Prop.~2.4]{AcostaB_17}, there holds the Poincar\'e inequality
  $\norm{u}{L^2(\Omega)}\lesssim \snorm{u}{H^s(\R^d)}$ for all $u\in H^s_0(\R^d;\Omega)$.
  We conclude that $\norm{u}{H^s(\R^d)}\lesssim \snorm{u}{H^s(\R^d)}$ for all $u\in H^s_0(\R^d;\Omega)$.
  Combining this Poincar\'e inequality with the trace estimate~\eqref{lem:trace:eq1}, we obtain
  the ellipticity of the bilinear form on the left-hand side of~\eqref{eq:dirichlet:var}.
  The continuity of this bilinear form follows from~\eqref{eq:1}.
\end{proof}
\subsection{Technical details and proofs}\label{section:tech}
Define the Sobolev space $H^1_\alpha(\R^{d+1}_+)$ as the space of functions $\fu$ such that
\begin{align*}
  \norm{\fu}{L^2_\alpha(\R^{d+1}_+)}^2 + \norm{\nabla\fu}{L^2_\alpha(\R^{d+1}_+)}^2 < \infty.
\end{align*}
We start with a density result, whose proof is based on ideas from~\cite[Thm.~11.11]{Kufner_85}.
\begin{lemma}\label{lem:density}
  For $\alpha\in(-1,1)$, the space $C^{\infty}(\overline{\R^{d+1}_+})\cap H^1_\alpha(\R^{d+1}_+)$ is dense in
  $H^1_\alpha(\R^{d+1}_+)$.
\end{lemma}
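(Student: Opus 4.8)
\emph{Proof plan.} The plan is to reduce to the half-space by a combination of truncation, reflection across the hyperplane $\{x_{d+1}=0\}$, and mollification. The restriction $\alpha\in(-1,1)$ enters in exactly two places: $\alpha>-1$ is used in the reflection step, and $\alpha<1$ (the Muckenhoupt property of the weight) is used in the mollification step.

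\textbf{Step 1 (truncation).} Fix $\chi\in C_c^\infty(\R^{d+1})$ with $\chi\equiv 1$ on the unit ball and $\supp\chi$ contained in the ball of radius $2$, and set $\chi_R(x):=\chi(x/R)$. For $\fu\in H^1_\alpha(\R^{d+1}_+)$ one has $\nabla(\chi_R\fu)=\chi_R\nabla\fu+\fu\nabla\chi_R$ with $\|\fu\nabla\chi_R\|_{L^2_\alpha(\R^{d+1}_+)}^2\lesssim R^{-2}\int_{\{R\le|x|\le 2R\}}x_{d+1}^\alpha|\fu|^2\,dx\to0$ and, by dominated convergence, $\|(1-\chi_R)\nabla\fu\|_{L^2_\alpha}+\|(1-\chi_R)\fu\|_{L^2_\alpha}\to0$. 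Hence $\chi_R\fu\to\fu$ in $H^1_\alpha(\R^{d+1}_+)$, so it suffices to approximate functions with bounded support (this reduction is in any case needed for the alternative route indicated below).

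\textbf{Step 2 (even reflection).} For $\fu\in H^1_\alpha(\R^{d+1}_+)$ define the even extension $E\fu(x',x_{d+1}):=\fu(x',|x_{d+1}|)$ and the even weight $w(x):=|x_{d+1}|^\alpha$ on $\R^{d+1}$; let $H^1_w(\R^{d+1})$ and $L^2_w(\R^{d+1})$ be the corresponding weighted spaces. The tangential derivatives of $E\fu$ are the even reflections of those of $\fu$ and pose no difficulty. For the normal derivative, the key observation is that $\alpha>-1$ makes $t\mapsto t^{-\alpha}$ locally integrable on $[0,\infty)$; by Cauchy--Schwarz along the lines $\{x'\}\times(0,\infty)$ this shows that $\partial_{x_{d+1}}\fu$ is in $L^1_{\rm loc}(\overline{\R^{d+1}_+})$ and that $\fu$ possesses an $L^1_{\rm loc}$ boundary trace, so that in testing $E\fu$ against $\phi\in C_c^\infty(\R^{d+1})$ the one-sided boundary terms produced by integration by parts from above and from below cancel. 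Consequently $\partial_{x_{d+1}}(E\fu)$ is the \emph{odd} reflection of $\partial_{x_{d+1}}\fu$, with no singular part on $\{x_{d+1}=0\}$, and $E$ is bounded from $H^1_\alpha(\R^{d+1}_+)$ to $H^1_w(\R^{d+1})$.

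\textbf{Step 3 (mollification).} Since $\alpha\in(-1,1)$, the one-dimensional power weight $|t|^\alpha$ belongs to the Muckenhoupt class $A_2(\R)$, hence $w(x)=|x_{d+1}|^\alpha\in A_2(\R^{d+1})$; in particular the Hardy--Littlewood maximal operator $M$ is bounded on $L^2_w(\R^{d+1})$. For a standard mollifier $\rho_\eps$ one has the pointwise bound $|\rho_\eps*h|\lesssim Mh$ together with $\rho_\eps*h\to h$ almost everywhere; dominated convergence in $L^1_w$, with dominating function $(Mh)^2 w\in L^1$, gives $\rho_\eps*h\to h$ in $L^2_w(\R^{d+1})$ for every $h\in L^2_w(\R^{d+1})$. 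Applying this to $h=E\fu$ and to each component of $h=\nabla(E\fu)$, and using $\nabla(\rho_\eps*E\fu)=\rho_\eps*\nabla(E\fu)$, yields $\rho_\eps*E\fu\to E\fu$ in $H^1_w(\R^{d+1})$. The functions $v_\eps:=(\rho_\eps*E\fu)|_{\overline{\R^{d+1}_+}}$ then lie in $C^\infty(\overline{\R^{d+1}_+})\cap H^1_\alpha(\R^{d+1}_+)$ and satisfy $\|v_\eps-\fu\|_{H^1_\alpha(\R^{d+1}_+)}\le\|\rho_\eps*E\fu-E\fu\|_{H^1_w(\R^{d+1})}\to0$, which is the assertion.

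\textbf{Main obstacle.} The delicate point is Step 2: one must check that even reflection creates no distributional mass on $\{x_{d+1}=0\}$, i.e.\ that $\fu$ has an integrable trace and $\partial_{x_{d+1}}\fu$ is integrable up to the boundary, both of which rest on $\alpha>-1$ through $t^{-\alpha}\in L^1_{\rm loc}$. If one wishes to avoid Muckenhoupt theory in Step 3, it can be replaced by a direct proof that translation in the $x_{d+1}$-direction is continuous on $L^2_\alpha(\R^{d+1}_+)$ (first for $h\in C_c(\overline{\R^{d+1}_+})$ by dominated convergence using $t^\alpha\in L^1_{\rm loc}$, then in general), after which one mollifies a slightly shifted-up copy of $\fu$; this translation argument then becomes the technical heart, and is the route of \cite[Thm.~11.11]{Kufner_85}.
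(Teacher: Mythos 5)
Your proof is correct, but it takes a genuinely different route from the paper's. The paper first invokes \cite[Thm.~1]{GoldshteinU_09} for density of $C^\infty(\R^{d+1}_+)\cap H^1_\alpha$ (interior smoothness only), then ``flattens'' the function below height $h$ by replacing it with $\fu(\cdot,h)$, shows via a one-dimensional weighted trace inequality that this converges as $h\to 0$, and finally mollifies in the region $\{x_{d+1}>h/2\}$, staying entirely within the half-space. Your argument instead doubles the domain by even reflection and then mollifies globally, relying on the fact that $|x_{d+1}|^\alpha$ is a Muckenhoupt $A_2$ weight for $\alpha\in(-1,1)$ so that the maximal function controls the mollification error in $L^2_w$. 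Both proofs exploit the interval $(-1,1)$ from both ends, but in a different division of labor: the paper uses $\alpha+1>0$ to kill the boundary layer in the flattening step and does not need $A_2$ at all; you use $\alpha>-1$ to show that reflection creates no distributional mass on $\{x_{d+1}=0\}$ (via local integrability of $t^{-\alpha}$ and the resulting $L^1_{\rm loc}$ trace) and $\alpha<1$ for the $A_2$ property driving the mollification. Your approach is structurally cleaner (truncate, reflect, mollify) and avoids the external citation for interior density, at the cost of invoking weighted maximal-function theory; the paper's approach is more elementary in its analytic toolkit but needs the two-stage smoothing (interior then up-to-the-boundary). One small caution worth recording: in Step~2, to justify that the boundary terms at $x_{d+1}=\pm\delta$ vanish as $\delta\to0$, one should note that $|\fu(x',\delta)|\le|\fu(x',1)|+\int_0^1|\partial_{d+1}\fu(x',t)|\,dt$ gives a $\delta$-independent $L^1_{\rm loc}$ majorant in $x'$, which (after the Step~1 truncation, or directly using the compact support of the test function) makes the dominated-convergence argument airtight; your sketch correctly isolates $\alpha>-1$ as the mechanism but leaves this last integrability step implicit.
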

\begin{proof}
By \cite[Thm.~1]{GoldshteinU_09}
  the space $C^\infty(\R^{d+1}_+)\cap H^1_\alpha(\R^{d+1}_+)$ 
is dense in $H^1_\alpha(\R^{d+1}_+)$. 
  Hence, without loss of generality, we may assume that $\fu\in C^\infty(\R^{d+1}_+)\cap H^1_\alpha(\R^{d+1}_+)$.
     For $h>0$, define the function $\fu_h$ by
      \begin{align*}
        \fu_h(x_1,\dots,x_{d+1}):=
        \begin{cases}
          \fu(x_1,\dots,x_{d+1}) & \text{ if } h<x_{d+1}\\
          \fu(x_1,\dots,h) & \text{ if } x_{d+1}\leq h.
        \end{cases}
      \end{align*}
      By construction, $\fu_h\in C(\overline{\R^{d+1}_+})\cap H^1_\alpha(\R^{d+1}_+)$ and
      \begin{align}\label{lem:density:eq2}
        \begin{split}
          \norm{\fu-\fu_h}{H^1_\alpha(\R^{d+1}_+)}^2 = \norm{\fu-\fu_h}{H^1_\alpha(\R^d\times(0,h))}^2
          &= \norm{\fu-\fu_h}{L^2_\alpha(\R^d\times(0,h))}^2 + \norm{\nabla \fu}{L^2_\alpha(\R^d\times(0,h))}^2\\
          &\lesssim \norm{\fu}{H^1_\alpha(\R^d\times(0,h))}^2 + \norm{\fu_h}{L^2_\alpha(\R^d\times(0,h))}^2.
        \end{split}
      \end{align}
By Lebesgue Dominated Convergence, we have 
$\lim_{h \rightarrow 0} \norm{\fu}{H^1_\alpha(\R^d\times (0,h))} = 0$. Hence, we focus on showing
$\lim_{h \rightarrow 0} \norm{\fu_h}{L^2_\alpha(\R^d\times (0,h))} = 0$. To that end, we use a 1D trace
inequality: For $v \in C^1(0,\infty)$ we have $v(h) = v(y) - \int_{h}^y v^\prime(t)\,dt$ so that 
\begin{align*}
\int_{y=h}^{h+1} y^\alpha v^2(h)\,dy &\leq 
2 \int_{y=h}^{h+1} y^\alpha v^2(y) \,dy 
+ 2 \int_{y=h}^{h+1} y^\alpha \left| \int_{t=h}^y |v^\prime(t)|\,dt\right|^2\,dy\\
&\lesssim \|v\|^2_{L^2_\alpha(h,h+1)} + \int_{y=h}^{h+1} y^\alpha y^{1-\alpha} \int_{t=h}^{h+1} t^\alpha |v^\prime(t)|^2\,dt\,dy
\lesssim \|v\|^2_{L^2_\alpha(h,h+1)} + \|v^\prime\|^2_{L^2_\alpha(h,h+1)}. 
\end{align*}
Since there exists $C > 0$ such that  for $h \in (0,1]$, 
we have $C^{-1} \leq \int_{h}^{h+1} t^\alpha\,dt \leq C$, we can conclude 
\begin{align}
\label{eq:1d-trace}
|v(h)|^2 \leq C^2_{\rm trace}  \left[ \|v\|^2_{L^2_\alpha(h,h+1)} + \|v^\prime\|^2_{L^2_\alpha(h,h+1)}
\right].
\end{align}
With this, we estimate 
      \begin{align*}
        \norm{\fu_h}{L^2_\alpha(\R^d\times(0,h))}^2 &= \int_0^h x_{d+1}^\alpha\int_{y\in\R^d}\fu_h(y,h)^2\, dy\, dx_{d+1}
        = h^{\alpha+1} \int_{y\in\R^d}\fu(y,h)^2\, dy\\
        &\stackrel{\eqref{eq:1d-trace}}{\leq} C^2_{\rm trace} h^{\alpha+1} \norm{\fu}{H^1_\alpha(\R^{d}\times(h,h+1))}^2
        \leq C^2_{\rm trace} h^{\alpha+1} \norm{\fu}{H^1_\alpha(\R^{d+1}_{+})}^2.
      \end{align*}
      As $\alpha+1>0$ we conclude that $\lim_{h \rightarrow} \norm{\fu_h}{L^2_\alpha(\R^d\times (0,h))} = 0$. 
      Since $\fu_h$ is only piecewise smooth, we perform, as a last step, a mollification step.   
      The above shows that, given $\varepsilon>0$, we can fix $h$ such that
      \begin{align}\label{lem:density:eq3}
        \norm{\fu-\fu_h}{H^1_\alpha(\R^{d+1}_+)} \leq \varepsilon.
      \end{align}
      Next, for $0<\delta<h/4$ define the function
      \begin{align*}
        \wilde \fu_\delta :=
        \begin{cases}
          \rho_{\delta} \star \fu_h & h/2 < x_{d+1}\\
          \fu_h & x_{d+1}\leq h/2.
        \end{cases}
      \end{align*}
      Then, $\wilde \fu_\delta\in C^\infty(\overline{\R^{d+1}_+})\cap H^1_\alpha(\R^{d+1}_+)$, cf.~\cite{GoldshteinU_09}, and
      \begin{align}\label{lem:density:eq4}
        \norm{\fu_h-\wilde \fu_\delta}{H^1_\alpha(\R^{d+1}_+)} = \norm{\fu_h-\rho_\delta \star \fu_h}{H^1_\alpha(\R^d\times(h/2,\infty))}.
      \end{align}
      Note that $h$ is already fixed. Standard results about mollification, cf., e.g.,~\cite{GoldshteinU_09},
      show that the term $\norm{\fu_h-\rho_\delta \star \fu_h}{H^1_\alpha(\R^d\times(h/2,\infty))}$
      converges to zero for $\delta\rightarrow 0$. Hence, choosing $\delta$ small enough, we obtain from~\eqref{lem:density:eq3}
      and~\eqref{lem:density:eq4} that $\norm{\fu-\wilde \fu_\delta}{H^1_\alpha(\R^{d+1}_+)}\leq 2\varepsilon$, which proves the result.
\end{proof}
Next, we show that the trace operator $\trace$ extends continuously to weighted Sobolev spaces.
\begin{lemma}\label{lem:mult:trace}
  Let $\alpha\in(-1,1)$. The trace operator $\trace$ has a unique extension
  as linear and bounded operator $H^1_\alpha(\R^{d+1}_+)\rightarrow L^2(\R^d)$,
  and there holds the multiplicative trace inequality
  \begin{align}\label{lem:mult:trace:eq}
    \norm{\trace\, \fu}{L^2(\Omega)} \leq
    C_{\rm tr} \norm{\fu}{L^2_\alpha(\Omega_+)}^{(1-\alpha)/2} \cdot
    \norm{\partial_{d+1} \fu}{L^2_\alpha(\Omega_+)}^{(1+\alpha)/2},
  \end{align}
  for all measurable subsets $\Omega\subseteq\R^d$, where $\Omega_+ := \Omega\times (0,\infty)$.
  The constant $C_{\rm tr}$ does not depend on $\Omega$.
\end{lemma}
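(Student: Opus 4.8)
The plan is to prove \eqref{lem:mult:trace:eq} first for smooth $\fu$, reducing it to a scale-invariant one-dimensional weighted trace inequality, and then to extend $\trace$ to all of $H^1_\alpha(\R^{d+1}_+)$ by density. By Lemma~\ref{lem:density} the space $C^\infty(\overline{\R^{d+1}_+})\cap H^1_\alpha(\R^{d+1}_+)$ is dense in $H^1_\alpha(\R^{d+1}_+)$, so it suffices to establish \eqref{lem:mult:trace:eq} for such $\fu$; taking $\Omega=\R^d$ and using Young's inequality $a^{1-\alpha}b^{1+\alpha}\le\tfrac{1-\alpha}{2}a^2+\tfrac{1+\alpha}{2}b^2\le a^2+b^2$ then yields $\norm{\trace\,\fu}{L^2(\R^d)}\le C_{\rm tr}\norm{\fu}{H^1_\alpha(\R^{d+1}_+)}$, so that $\trace$ extends uniquely to a bounded linear operator $H^1_\alpha(\R^{d+1}_+)\to L^2(\R^d)$. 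Since $\Omega_+\subseteq\R^{d+1}_+$, convergence in $H^1_\alpha(\R^{d+1}_+)$ forces convergence of $\fu$ and $\partial_{d+1}\fu$ in $L^2_\alpha(\Omega_+)$ and of the traces in $L^2(\Omega)$, so \eqref{lem:mult:trace:eq} passes to the limit.

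Now fix a smooth $\fu$. By Fubini, for a.e.\ $x\in\R^d$ the slice $v:=\fu(x,\cdot)\in C^1([0,\infty))$ satisfies $\int_0^\infty t^\alpha v(t)^2\,dt<\infty$ and $\int_0^\infty t^\alpha v'(t)^2\,dt<\infty$. I would first prove, for every such $v$ and every $\lambda>0$, the estimate
\[
  |v(0)|^2 \le \frac{2(1+\alpha)}{\lambda^{1+\alpha}}\int_0^\lambda t^\alpha v(t)^2\,dt + \frac{2\lambda^{1-\alpha}}{1-\alpha}\int_0^\lambda t^\alpha v'(t)^2\,dt .
\]
For this, write $v(0)=v(\tau)-\int_0^\tau v'(r)\,dr$ for $0<\tau<\lambda$; by Cauchy--Schwarz, $\bigl(\int_0^\tau|v'|\,dr\bigr)^2\le\bigl(\int_0^\lambda r^{-\alpha}\,dr\bigr)\int_0^\lambda r^\alpha v'(r)^2\,dr=\tfrac{\lambda^{1-\alpha}}{1-\alpha}\int_0^\lambda r^\alpha v'(r)^2\,dr$, where finiteness of $\int_0^\lambda r^{-\alpha}\,dr$ uses $\alpha<1$. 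Hence $|v(0)|^2\le 2|v(\tau)|^2+\tfrac{2\lambda^{1-\alpha}}{1-\alpha}\int_0^\lambda r^\alpha v'(r)^2\,dr$, and averaging in $\tau$ against the weight $\tau^\alpha$ over $(0,\lambda)$, which is legitimate because $\int_0^\lambda\tau^\alpha\,d\tau=\tfrac{\lambda^{1+\alpha}}{1+\alpha}<\infty$ uses $\alpha>-1$, gives the displayed bound. Minimizing its right-hand side over $\lambda>0$ (the degenerate case in which one of the two integrals over $(0,\infty)$ vanishes being immediate, since then $v\equiv0$) yields
\[
  |v(0)|^2 \le C_\alpha\Bigl(\int_0^\infty t^\alpha v(t)^2\,dt\Bigr)^{(1-\alpha)/2}\Bigl(\int_0^\infty t^\alpha v'(t)^2\,dt\Bigr)^{(1+\alpha)/2}
\]
with $C_\alpha$ depending only on $\alpha$; the exponents $(1\mp\alpha)/2$ are exactly what the scaling optimization produces.

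Finally I would integrate in $x$: applying the last display with $v=\fu(x,\cdot)$, integrating over $x\in\Omega$, and using Hölder's inequality on $\Omega$ with conjugate exponents $2/(1-\alpha)$ and $2/(1+\alpha)$ gives $\norm{\trace\,\fu}{L^2(\Omega)}^2\le C_\alpha\norm{\fu}{L^2_\alpha(\Omega_+)}^{1-\alpha}\norm{\partial_{d+1}\fu}{L^2_\alpha(\Omega_+)}^{1+\alpha}$, i.e.\ \eqref{lem:mult:trace:eq} with $C_{\rm tr}:=\sqrt{C_\alpha}$, which manifestly does not depend on $\Omega$. Combined with the density and extension argument of the first paragraph, this finishes the proof.

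The only step needing care is the one-dimensional estimate, and it turns entirely on the two-sided bound $\alpha\in(-1,1)$ (equivalently $s\in(0,1)$): $\alpha<1$ is what makes $r^{-\alpha}$ integrable at the origin, so that $\int_0^\tau|v'|$ can be controlled by a weighted $L^2$ norm of $v'$, while $\alpha>-1$ is what makes $\tau^\alpha$ integrable at the origin, so that averaging $|v(0)|^2$ against $\tau^\alpha$ is meaningful; both facts are needed and neither can be dropped.
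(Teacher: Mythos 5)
Your proposal is correct, and the overall strategy coincides with the paper's: reduce by density (Lemma~\ref{lem:density}) to smooth functions, establish a one-dimensional weighted multiplicative trace estimate on each slice $x\mapsto\fu(x,\cdot)$, optimize a free scale parameter, then integrate over $\Omega$ and use H\"older with exponents $2/(1\mp\alpha)$. The difference is in how the one-dimensional estimate is produced. The paper bounds $|v(0)|\le|v(y)|+\bigl|\int_0^y v'\bigr|$, then bounds $|v(y)|^2$ via an \emph{unweighted} 1D trace inequality on the dyadic interval $(y,2y)$, converts to the weighted norm using $t^\alpha\sim y^\alpha$ there, and optimizes $y$. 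You instead start from $|v(0)|^2\le 2|v(\tau)|^2+\text{(Cauchy--Schwarz term)}$ and average against the weight $\tau^\alpha$ over $(0,\lambda)$, then minimize over $\lambda$. Your route is more direct and elementary: it makes the roles of $\alpha<1$ (integrability of $r^{-\alpha}$ at $0$) and $\alpha>-1$ (integrability of $\tau^\alpha$ at $0$) immediately visible, and it dispenses with the paper's cut-off reduction to $\operatorname{supp}\fu\subset\R^d\times(0,1)$ — your observation that $v'\equiv0$ together with $\int_0^\infty t^\alpha v^2<\infty$ already forces $v\equiv0$ handles the degenerate case without any support assumption. The paper's dyadic-interval argument is slightly more modular (it invokes a standard unweighted 1D trace inequality as a black box), but both derivations give the same sharp exponents and an $\Omega$-independent constant, and the final H\"older-in-$x$ step is identical.
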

\begin{proof}
  In order to prove all statements of the lemma, we note that due to Lemma~\ref{lem:density}, it
  is sufficient to show the estimate~\eqref{lem:mult:trace:eq} for smooth functions
  $\fu\in C^\infty(\overline{\R^{d+1}_+})\cap H^1_\alpha(\R^{d+1}_+)$. 
 We may also assume that $\fu$ is supported by $\R^d \times (0,1)$.
  Using the abreviation $v(x) = \fu(x_1,\dots,x_d,x)$, we note that due to H\"older's inequality
  \begin{align*}
    \abs{v(0)} &\leq \abs{v(y)} + \abs{\int_0^y v'(t)\,dt}
    \lesssim \abs{v(y)} + y^{(1-\alpha)/2} \norm{v'}{L^2_\alpha(\R_+)}.
  \end{align*}
  A one-dimensional trace inequality and a scaling argument show for $y > 0$
  \begin{align*}
    \abs{v(y)}^2
    &\lesssim y^{-1} \int_y^{2y} \abs{v(t)}^2\,dt + y \int_y^{2y} \abs{v'(t)}^2\,dt.
  \end{align*}
  For $t\in(y,2y)$ we have $1\leq y^{-\alpha} t^\alpha$ if $\alpha\in [0,1)$
  and $1 \leq 2^{-\alpha} y^{-\alpha} t^\alpha$ if $\alpha \in (-1,0)$,
  and we conclude
  \begin{align*}
    \abs{v(y)}^2 &\lesssim y^{-1-\alpha}\norm{v}{L^2_\alpha(\R_+)}^2 + y^{1-\alpha}\norm{v'}{L^2_\alpha(\R_+)}^2.
  \end{align*}
  If $\norm{v'}{L^2_\alpha(\R_+)} \ne 0$, we set 
  $y = \norm{v}{L^2_\alpha(\R_+)} \cdot \norm{v'}{L^2_\alpha(\R_+)}^{-1}$ and 
  get 
  \begin{align}
  \label{eq:1D-trace}
    \abs{v(0)}^2 \lesssim \norm{v}{L^2_\alpha(\R_+)}^{1-\alpha}\cdot\norm{v'}{L^2_\alpha(\R_+)}^{1+\alpha}.
  \end{align}
  We note that \eqref{eq:1D-trace} is also valid if 
  $\norm{v'}{L^2_\alpha(\R_+)} = 0$ since our assumption
  $\operatorname{supp} \fu \in \R^d \times [0,1]$ implies in 
  this degenerate case $v \equiv 0$. 
  Integrating $\fu(\cdot,0)$ over $\Omega$ and using \eqref{eq:1D-trace} 
  shows~\eqref{lem:mult:trace:eq}.
\end{proof}
\begin{lemma}\label{lem:trace}
  Let $\alpha\in(-1,1)$ and $s$ be given by \eqref{eq:alpha}.
  The trace operator $\trace$ is bounded as
  $\trace: H^1_\alpha(\R^{d+1}_+)\rightarrow H^s(\R^d)$, and
  \begin{align}\label{lem:trace:eq1} 
    \snorm{\trace\,\fu}{H^s(\R^d)} \lesssim \norm{\nabla \fu}{L^2_\alpha(\R^{d+1}_+)}. 
  \end{align}
\end{lemma}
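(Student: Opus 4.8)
The plan is to reduce, by density, to smooth functions and then, via the Fourier transform in the tangential variables, to a one-dimensional weighted trace inequality. By Lemma~\ref{lem:density} it suffices to prove \eqref{lem:trace:eq1} for $\fu\in C^\infty(\overline{\R^{d+1}_+})\cap H^1_\alpha(\R^{d+1}_+)$; boundedness of $\trace$ into the \emph{full} space $H^s(\R^d)$ then follows by adding the $L^2$-bound on the trace already contained in Lemma~\ref{lem:mult:trace} (taken with $\Omega=\R^d$, then estimated by $\norm{\fu}{H^1_\alpha(\R^{d+1}_+)}$ via Young's inequality). Let $\widehat{\cdot}$ denote the Fourier transform with respect to $x'=(x_1,\dots,x_d)$ and set $v(\xi,t):=\widehat{\fu(\cdot,t)}(\xi)$. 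Plancherel's theorem in the tangential variables identifies $\norm{\nabla\fu}{L^2_\alpha(\R^{d+1}_+)}^2$ with $\int_{\R^d}\int_0^\infty t^\alpha(|\xi|^2|v(\xi,t)|^2+|\partial_t v(\xi,t)|^2)\,dt\,d\xi$, and $\snorm{\trace\,\fu}{H^s(\R^d)}^2$ with a constant times $\int_{\R^d}|\xi|^{2s}|v(\xi,0)|^2\,d\xi$. Hence the estimate reduces to showing, for a.e.\ fixed $\xi\ne 0$ with $\lambda:=|\xi|$,
\begin{equation}\label{plan:1d}
\lambda^{2s}|v(\xi,0)|^2 \lesssim \int_0^\infty t^\alpha\bigl(\lambda^2|v(\xi,t)|^2+|\partial_t v(\xi,t)|^2\bigr)\,dt
\end{equation}
with implied constant independent of $\xi$, and then integrating over $\xi\in\R^d$.

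For \eqref{plan:1d}, abbreviate $g:=v(\xi,\cdot)$ and fix $T>0$. The fundamental theorem of calculus together with the Cauchy--Schwarz inequality (using $\alpha<1$, so $\int_0^T t^{-\alpha}\,dt<\infty$, and $1-\alpha=2s$) gives $|g(0)|\le|g(T)|+\int_0^T|g'|\lesssim|g(T)|+T^{s}\norm{g'}{L^2_\alpha(\R_+)}$, while the elementary one-dimensional trace inequality on the interval $(T,2T)$ combined with $t^\alpha\simeq T^\alpha$ there yields $|g(T)|^2\lesssim T^{-1-\alpha}\norm{g}{L^2_\alpha(\R_+)}^2+T^{1-\alpha}\norm{g'}{L^2_\alpha(\R_+)}^2$. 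Collecting these bounds, choosing $T=\lambda^{-1}$, and multiplying by $\lambda^{2s}=\lambda^{1-\alpha}$ gives precisely \eqref{plan:1d}. (Equivalently, one may invoke \eqref{eq:1D-trace} and a weighted Young inequality, noting that the proof of \eqref{eq:1D-trace} uses the support hypothesis only to rule out the degenerate case.) The degenerate cases here are harmless: $\xi=0$ does not enter the $H^s$-seminorm, and $g'\equiv 0$ together with $g\in L^2_\alpha(\R_+)$ forces $g\equiv 0$ since $\int_0^\infty t^\alpha\,dt=\infty$.

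The only genuinely delicate points are the Fubini/measurability arguments needed to make the slicing rigorous: that for a.e.\ $\xi$ the slice $v(\xi,\cdot)$ lies in $H^1_\alpha(\R_+)$, is absolutely continuous (so that the fundamental theorem of calculus step and the pointwise value $v(\xi,0)$ are legitimate), and decays sufficiently as $t\to\infty$ so that the boundary term at $T\to\infty$ is controlled. These are routine once one has restricted to the dense class of smooth $\fu$ with decay at infinity, so no real obstacle remains. I would also note an alternative, Fourier-free route in which the Gagliardo double integral for $\snorm{\trace\,\fu}{H^s(\R^d)}$ is estimated directly by inserting the two heights $\fu(x,r)$ and $\fu(y,r)$, with $r:=|x-y|$, into $u(x)-u(y)$ and integrating; but the argument sketched above is shorter to carry out and is the one I would adopt.
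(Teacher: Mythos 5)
Your proof is correct, but it follows a genuinely different route from the paper. You reduce by density to smooth functions (as the paper does), but then pass to the tangential Fourier transform, use Plancherel to rewrite both the weighted Dirichlet energy and the $H^s$-seminorm, and reduce the estimate to a one-dimensional weighted trace inequality on each Fourier slice, closed by choosing the sampling height $T=|\xi|^{-1}$; the scaling $1-\alpha=2s$ makes the powers of $\lambda=|\xi|$ match exactly, and since you work with a fixed finite $T$ no decay at $t\to\infty$ is actually needed. The paper instead stays entirely on the physical side: it estimates the Gagliardo double integral for $\snorm{\trace\,\fu}{H^s(\R^d)}$ directly by inserting the intermediate value $\fu(\tfrac{x+y}{2},\tfrac{|x-y|}{2})$, passing to polar coordinates $y=x+r\phi$, applying the fundamental theorem of calculus along the slanted segment, and concluding with the weighted Hardy inequality of Muckenhoupt/Zygmund. (Your parenthetical ``Fourier-free alternative'' is essentially the paper's argument.) Your route is the classical one going back to the Fourier characterization in Caffarelli--Silvestre and is arguably shorter once the slicing technicalities (measurability, absolute continuity of $v(\xi,\cdot)$ for a.e.\ $\xi$, identification of $v(\xi,0)$ with $\widehat{\trace\,\fu}$) are dispatched on the dense smooth class, and it is well suited to tracking constants; the paper's argument avoids the Fourier transform altogether and works directly with the Sobolev--Slobodecki seminorm, which makes it more robust under perturbations (non-flat geometries, variable coefficients) where Plancherel is unavailable. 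Both yield the same estimate, and both complete the boundedness into the full $H^s(\R^d)$ by appending the $L^2$ trace bound of Lemma~\ref{lem:mult:trace}.
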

\begin{proof}
  Due to Lemma~\ref{lem:density}, it suffices to show~\eqref{lem:trace:eq1} for $\fu\in C^\infty(\overline{\R^{d+1}_+})\cap H^1_\alpha(\R^{d+1}_+)$. 
  Combining~\eqref{lem:trace:eq1} with Lemma~\ref{lem:mult:trace} then proves that $\trace: H^1_\alpha(\R^{d+1}_+)\rightarrow H^s(\R^d)$ is bounded.

Upon writing $y = x + r \phi$ with polar coordinates 
  $r > 0$, $\phi\in S^{d-1}:=\partial B_1(0) \subset \R^d$, we obtain 
  with the triangle inequality and symmetry arguments 
  \begin{align*}
    \snorm{\trace\,\fu}{H^s(\R^d)}^2 &= \int_{\R^d}\int_{\R^d} \frac{\abs{\fu(x,0)-\fu(y,0)}^2}{\abs{x-y}^{d+2s}} \,dy\,dx
    \lesssim \int_{\R^d}\int_{\R^d} \frac{\abs{\fu(\tfrac{x+y}{2},\tfrac{\abs{x-y}}{2})-\fu(x,0)}^2}{\abs{x-y}^{d+2s}} \,dy\,dx\\
    &\sim \int_x \int_{\phi \in S^{d-1}}\int_{r=0}^\infty \frac{\abs{\fu(x + \tfrac{r}{2}\phi,\tfrac{r}{2})-\fu(x,0)}^2}{r^{1+2s}}\,dr\,d\phi\,dx. 
  \end{align*}
The fundamental theorem of calculus gives 
  \begin{align*}
    \fu(x + \tfrac{r}{2}\phi,\tfrac{r}{2})-\fu(x,0) = 
\int_0^{r} \nabla_{x} \fu(x + \tfrac{y}{2} \phi,\tfrac{y}{2})\cdot \phi +
    \partial_{d+1}\fu(x+\tfrac{y}{2}\phi,\tfrac{y}{2})\,dy,
  \end{align*}
  and the weighted Hardy inequality from~\cite[I, Thm.~9.16]{Zygmund_02}, cf. also~\cite[(1.1)]{Muckenhoupt_72}, then implies 
  \begin{align*}
    \int_{r=0}^\infty \frac{\abs{\fu(x+\tfrac{r}{2}\phi,\tfrac{r}{2})-\fu(x,0)}^2}{r^{1+2s}}\,dr \lesssim
    \int_{r=0}^\infty r^{1-2s} \abs{\nabla_{x} \fu(x+\tfrac{r}{2}\phi,\tfrac{r}{2}) \cdot \phi+ \partial_{d+1}\fu(x+\tfrac{r}{2}\phi,\tfrac{r}{2})}^2.
  \end{align*}
  Hence,
  \begin{align*}
    \snorm{\trace\,\fu}{H^s(\R^d)}^2 \lesssim
    \int_{x\in\R^d}\int_{\phi \in S^{d-1}} \int_{r=0}^\infty r^{1-2s} 
\abs{\nabla \fu(x+\tfrac{r}{2}\phi,\tfrac{r}{2})}^2\,dr\,d\phi\,dx
    \lesssim \norm{\nabla \fu}{L^2_\alpha(\R^{d+1}_+)}^2,
  \end{align*}
  which proves~\eqref{lem:trace:eq1}.
\end{proof}
Next, we will show that the trace operator $\trace: H^1_\alpha(\R^{d+1}_+)\rightarrow H^s(\R^d)$
is actually onto. To that end, we generalize ideas from~\cite{Gagliardo_57}.
\begin{lemma}\label{lem:ext}
  Let $\alpha\in(-1,1)$ and $s$ be given by \eqref{eq:alpha}.
  There exists a bounded linear operator
  $\eE:H^s(\R^d)\rightarrow H^1_\alpha(\R^{d+1}_+)$ that is a right-inverse of the trace operator $\trace$.
  Furthermore, there exists a constant $C>0$ such that for all $h>0$ it holds
  \begin{align*}
    \norm{\eE u}{L^2_\alpha(\R^d\times (0,h))} \leq C h^{1-s}\norm{u}{L^2(\R^d)}.
  \end{align*}
\end{lemma}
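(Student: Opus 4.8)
The plan is to construct $\eE u$ by an explicit averaging (mollification-in-$x$) formula in the spirit of Gagliardo's classical trace extension, with the mollification length scale tied to the height $x_{d+1}$. Concretely, I would fix a nonnegative $\rho \in C_0^\infty(\R^d)$ with $\int \rho = 1$ supported in the unit ball, set $\rho_t(x) := t^{-d}\rho(x/t)$, and define
\begin{align}
\label{eq:ext-def}
  \eE u(x, x_{d+1}) := \int_{\R^d} \rho_{x_{d+1}}(x - z)\, u(z)\, dz = (\rho_{x_{d+1}} \star u)(x), \qquad x_{d+1} > 0.
\end{align}
Possibly I would cut this off for $x_{d+1} > 1$ (multiply by a fixed smooth cutoff in $x_{d+1}$) so that $\eE u$ is supported in $\R^d \times (0,1)$; this is harmless since the trace is unaffected and it makes the weighted norms finite. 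The trace identity $\trace\, \eE u = u$ follows from standard mollifier convergence $\rho_t \star u \to u$ in $L^2$ as $t \to 0^+$ (and pointwise a.e. along a subsequence, or in $H^s$ by Fourier arguments), so $\eE$ is a right inverse of $\trace$.

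The main work is the two norm estimates. For the weighted $L^2$ bound over a slab, Young's inequality gives $\|\rho_{x_{d+1}} \star u\|_{L^2(\R^d)} \le \|\rho_{x_{d+1}}\|_{L^1} \|u\|_{L^2(\R^d)} = \|u\|_{L^2(\R^d)}$, uniformly in $x_{d+1}$; hence
\begin{align*}
  \norm{\eE u}{L^2_\alpha(\R^d\times(0,h))}^2 = \int_0^h x_{d+1}^\alpha \|\rho_{x_{d+1}}\star u\|_{L^2(\R^d)}^2\, dx_{d+1}
  \le \|u\|_{L^2(\R^d)}^2 \int_0^h x_{d+1}^\alpha\, dx_{d+1} = \frac{h^{\alpha+1}}{\alpha+1}\|u\|_{L^2(\R^d)}^2,
\end{align*}
which is exactly $C h^{1-s}\|u\|_{L^2(\R^d)}$ after squaring since $\alpha + 1 = 2(1-s)$; note $\alpha > -1$ is precisely what makes the integral converge. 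For the gradient bound $\|\nabla \eE u\|_{L^2_\alpha(\R^{d+1}_+)} \lesssim \|u\|_{H^s(\R^d)}$, I would treat $\nabla_x \eE u$ and $\partial_{x_{d+1}}\eE u$ separately. Both are, after differentiating through the convolution, of the form $t^{-1}(\psi_t \star u)(x)$ with $t = x_{d+1}$ and $\psi$ a fixed Schwartz function with $\int \psi = 0$ (for $\nabla_x$, $\psi = \nabla \rho$; for $\partial_{x_{d+1}}$, $\psi(y) = -d\rho(y) - y\cdot\nabla\rho(y)$, which integrates to zero). The cancellation $\int \psi = 0$ lets me write $(\psi_t \star u)(x) = \int \psi_t(x-z)(u(z) - u(x))\, dz$, and then a Cauchy–Schwarz/Minkowski argument bounds $\int_0^\infty t^\alpha t^{-2}\|\psi_t \star u\|_{L^2}^2\, dt$ by the Gagliardo seminorm; equivalently, one recognizes this as a Littlewood–Paley / Lizorkin–Triebel square-function characterization of $\dot H^s(\R^d)$ with the index arithmetic $\alpha - 2 = -1 - 2s$ matching the weight $t^{2s-1}$ against the scaling of $\|\psi_t\star u\|_{L^2}$. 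I expect this gradient estimate — getting the $H^s$-seminorm cleanly out of the $t$-integral of the mollified differences — to be the main obstacle, though it is entirely standard harmonic analysis; the cleanest route is probably the direct computation writing everything in terms of $u(z) - u(x)$ and integrating in $t$ first using Fubini, exactly as in the proof of Lemma~\ref{lem:trace} above but in reverse.

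Finally, boundedness $\eE: H^s(\R^d) \to H^1_\alpha(\R^{d+1}_+)$ is the combination of the two displays (the $L^2_\alpha$ bound with $h$ fixed, say $h = 1$ given the cutoff, controls the zeroth-order part, and the gradient bound controls $\|\nabla \eE u\|_{L^2_\alpha}$), and linearity of $\eE$ is immediate from \eqref{eq:ext-def}. I would present the construction first, then verify the trace property, then prove the gradient estimate, and close with the slab estimate, which is the one explicitly needed later (in the splitting $\lL u = \eE u + \fu$).
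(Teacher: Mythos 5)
Your proposal is correct and takes essentially the same approach as the paper's proof: the same Gagliardo-type extension $\eE u(x,t) = \eta(t)\,\rho_t\star u(x)$, the same Young's-inequality slab estimate, and the same key observation that $\nabla_x$ and $\partial_t$ turn $\rho_t$ into $t^{-1}$ times a zero-mean kernel, which lets one insert $u(z)-u(x)$ and control the $t$-integral by the Gagliardo seminorm. The paper carries out that last step via polar coordinates and Hardy's inequality, which is exactly the ``direct computation'' route you sketch.
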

\begin{proof}
  Let $\rho\in C^\infty_0(\R^d)$ and $\eta\in C^\infty(\R)$ 
  with $\operatorname*{supp} \eta \subset (-1,1)$ and $\eta\equiv 1$ in $(-1/2,1/2)$.
  We denote a point in $\R^{d+1}_+$ by $(x,t)$ with $x\in\R^{d}$.
  Define the extension operator as the mollification $\eE u (x,t) := \eta(t) \rho_t \star u(x)$, where
  $\rho_t(y) := t^{-d} \rho(y/t)$. Since 
  $\norm{\rho_t\star u}{L^2(\R^d)}\lesssim \norm{u}{L^2(\R^d)}$ uniformly in $t > 0$ (cf., e.g., 
  \cite[Thm.~{2.29}]{adams-fournier03}), 
we immediately obtain the postulated estimate
  \begin{align*}
    \norm{\eE u}{L^2_\alpha(\R^d\times (0,h))}^2 \leq \norm{\eta}{L^{\infty}}^2
    \int_0^h t^\alpha \norm{\rho_t\star u}{L^2(\R^d)}^2\,dt
    \lesssim
    h^{2(1-s)}
    \norm{u}{L^2(\R^d)}^2.
  \end{align*}
  Since $\eta$ is compactly supported, this also shows $\norm{\eE u}{L^2_\alpha(\R^{d+1}_+)} \lesssim \norm{u}{L^2(\R^d)}$.
  For the desired statement that $\eE:H^s(\R^d)\rightarrow H^1_\alpha(\R^{d+1}_+)$ is bounded it is sufficient to prove
  \begin{align*}
    \norm{\nabla_{x}(\rho_t\star u)}{L^2_\alpha(\R^{d+1}_+)}^2 + \norm{\partial_t (\rho_t\star u)}{L^2_\alpha(\R^{d+1}_+)}^2
    \lesssim \snorm{u}{H^s(\R^d)}^2. 
  \end{align*}
  To that end, we calculate
  \begin{align*}
    \partial_t (\rho_t\star u)(x)
    &= -d t^{-d-1}\int_{\R^d} u(y) \rho\left( \frac{x-y}{t} \right)\,dy
    - t^{-d-2} \int_{\R^d} u(y) \nabla\rho \left( \frac{x-y}{t} \right)\cdot(x-y)\,dy.
  \end{align*}
  Integration by parts shows $\int_{\R^d} \nabla \rho(z)\cdot z\,dz = -d$, which yields
  \begin{align*}
    -d t^{-d-1}\int_{\R^d} \rho\left( \frac{x-y}{t} \right)\,dy
    = -d t^{-1}
    &= t^{-d-2} \int_{\R^d} \nabla\rho \left( \frac{x-y}{t} \right) \cdot (x-y)\,dy.
  \end{align*}
  Hence, we can write
  \begin{align*}
    \begin{split}
    \partial_t (\rho_t \star u)(x)
      &= -d t^{-d-1}\int_{\R^d} [u(y)-u(x)] \rho\left( \frac{x-y}{t} \right)\,dy\\
      &\qquad- t^{-d-2} \int_{\R^d} [u(y)-u(x)] \nabla\rho \left( \frac{x-y}{t} \right)\cdot(x-y)\,dy.
    \end{split}
  \end{align*}
  Next, we calculate for $1\leq j\leq d$
  \begin{align*}
    \partial_{x_j} (\rho_t\star u)(x) = t^{-d-1} \int_{\R^d} u(y) \left( \partial_{x_j} \rho \right)\left( \frac{x-y}{t} \right)\,dy.
  \end{align*}
  Integration by parts also shows that $\int (\partial_{x_j}\rho)(z)\,dz=0$, which yields
  \begin{align*}
    \partial_{x_j} (\rho_t\star u)(x) = t^{-d-1} \int_{\R^d} [u(y)-u(x)] \left( \partial_{x_j} \rho \right)\left( \frac{x-y}{t} \right)\,dy. 
  \end{align*}
  Due to the support properties of $\rho$, we conclude
  \begin{align*}
    \abs{\partial_t (\rho_t\star u)(x)} + \abs{\nabla_x (\rho_t\star u)(x)} \lesssim t^{-d-1} \int_{B_t(x)} \abs{u(x)-u(y)}\,dy,
  \end{align*}
  where $B_r(x)\subset \R^d$ denotes the ball of radius $t$ centered at $x$.
  Using polar coordinates and Hardy's inequality gives 
  \begin{align*}
    & \int_{0}^\infty t^\alpha \left( \abs{\partial_t (\rho_t\star u)(x)}^2 + \abs{\nabla_x (\rho_t\star u)(x)}^2 \right)
    \,dt
    \lesssim \int_{0}^\infty t^\alpha \left( t^{-d-1}\int_{B_t(x)} \abs{u(y)-u(x)}\,dy
    \right)^2\,dt\\
    &\quad \leq \int_0^\infty \left( t^{-1}
    \int_{B_t(x)} \frac{\abs{u(y)-u(x)}}{\abs{x-y}^{d-\alpha/2}}\,dy  \right)^2\,dt
= \int_0^\infty \left( t^{-1}
    \int_{B_t(0)} \frac{\abs{u(x)-u(x-z)}}{\abs{z}^{d-\alpha/2}}\,dz \right)^2\,dt
\\ &\quad 
= \int_0^\infty \left( t^{-1}
      \int_{r=0}^t \int_{\phi \in S^{s-1}} \frac{\abs{u(x)-u(x-r\phi))}}{r^{1-\alpha/2}}\,d\phi\,dr \right)^2\,dt
\\ &
\quad \lesssim 
    \int_{t=0}^\infty \left( \int_{\phi\in S^{d-1}} \frac{\abs{u(x)-u(x-t\phi))}}{t^{1-\alpha/2}}\,d\phi\right)^2\,dt
    \leq  \int_{\R^d} \frac{\abs{u(x)-u(y)}^2}{\abs{x-y}^{d+2s}}\,dy.
  \end{align*}
  Integrating this estimate over $x\in\R^d$ concludes the proof.
\end{proof}
We are in position to prove Lemma~\ref{thm:bl}.
\begin{proof}[Proof of Lemma~\ref{thm:bl}]
The proof follows a standard procedure. Since it involves functions 
in a half-space, we present some details. 

\emph{Step 1:} 
Let $\rho \in C^\infty_0(\R^{d+1})$ be a symmetric, non-negative function
with $\operatorname{supp} \rho \subset B_1(0)$ and 
set $\rho_\varepsilon(x):= \varepsilon^{-d} \rho(x/\varepsilon)$. 
Introduce the translation operator $\tau_h$ by $\tau_h \varphi(x):= 
\varphi(x - h e_{d+1})$ with $e_{d+1} = (0,0,\ldots,0,1) \in \R^{d+1}$. 
Define for $\varepsilon > 0$ the smoothing operator $\cA_\varepsilon$ by 
$
\cA_\varepsilon \varphi = \rho_\varepsilon \star (\tau_{2\varepsilon} \varphi)
$
and the regularized distribution 
$\fu_\varepsilon$ by 
$$
\langle \fu_\varepsilon,\varphi\rangle:= 
\langle \fu, \cA_\varepsilon \varphi\rangle = 
\langle \fu, \rho_\varepsilon \star (\tau_{2\varepsilon} \varphi)\rangle, 
$$
where we view $\varphi \in \dD(\R^{d+1}_+)$ as an element of 
$\varphi \in \dD(\R^{d+1})$ in the canonical way. 
Note that 
$\fu_\varepsilon  \in C^\infty(\R^{d+1}_+)$ by standard arguments and 
$\operatorname{supp} \fu_\varepsilon \subset 
\R^{d} \times (\varepsilon,\infty)$. We also note that 
\begin{equation}
\lim_{\varepsilon \rightarrow 0}
\langle \fu_\varepsilon ,\varphi\rangle 
= \langle \fu ,\varphi\rangle 
\qquad \forall \varphi \in \dD(\R^{d+1}_+).
\end{equation}
\emph{Step 2:} For $\alpha \in [0,1)$, we claim 
\begin{equation}
\|x_{d+1}^{-\alpha/2} \cA_\varepsilon(x_{d+1}^{\alpha/2} \varphi)\|_{L^2(\R^{d+1}_+)} \leq 
\|\varphi\|_{L^2(\R^{d+1}_+)} 
\qquad 
\forall \varphi \in \dD(\R^{d+1}_+). 
\end{equation}
To see this, we start by noting 
\begin{equation}
\label{eq:supp}
\sup_{\substack{(x,z)\\  x > \varepsilon,\, -\varepsilon <z <\varepsilon,\, x - 2 \varepsilon -z > 0}}
x^{-\alpha/2} (x-2\varepsilon - z)^{\alpha/2} \leq 1. 
\end{equation}
We observe $x_{d+1}^{\alpha/2} \varphi \in \dD(\R^{d+1}_+)$
and $\operatorname*{supp} \cA_\varepsilon( x_{d+1}^{\alpha/2} \varphi) 
\subset \R^d \times (\varepsilon,\infty)$ and write 
\begin{align*}
x_{d+1}^{-\alpha/2} \cA_\varepsilon(x_{d+1}^{\alpha/2} \varphi)(x) = 
x_{d+1}^{-\alpha/2} \int_{z \in B_\varepsilon(0)} \rho_\varepsilon(z) 
(x_{d+1} - 2\varepsilon - z_{d+1})^{\alpha/2} \varphi(x-2\varepsilon e_{d+1}-z)\,dz. 
\end{align*}
{}From \eqref{eq:supp} and $\rho_\varepsilon \ge 0$ we get 
\begin{align*}
\|x_{d+1}^{-\alpha/2} \cA_\varepsilon( x_{d+1}^{\alpha/2} \varphi)\|_{L^2(\R^{d+1}_+)} &\leq \|\cA_\varepsilon \varphi\|_{L^2(\R^{d+1}_+)} 
\leq \|\varphi\|_{L^2(\R^{d+1}_+)}.  
\end{align*}

\emph{Step 3:} For $\alpha \in [0,1)$ we have for every $\varepsilon >0$
\begin{equation}
\label{eq:grad-uepsilon}
\|\nabla \fu_\varepsilon\|_{L^2_\alpha(\R^{d+1}_+)} \leq C \|\nabla \fu\|_{L^2_\alpha(\R^{d+1}_+)}. 
\end{equation}
To see \eqref{eq:grad-uepsilon}, fix a bounded open  
$\omega \subset \R^{d+1}_+$. 
We compute for $\varphi \in \dD(\omega)$ and $\varepsilon > 0$, noting that 
$x_{d+1}^{\alpha/2} \varphi \in \dD(\omega)$,  
\begin{align*}
\left| \langle x_{d+1}^{\alpha/2}\nabla \fu_\varepsilon , \varphi\rangle
\right| & = 
\left| \langle \nabla \fu_\varepsilon , x_{d+1}^{\alpha/2}\varphi\rangle\right|
= 
\left| - \langle \fu_\varepsilon , \nabla( x_{d+1}^{\alpha/2}\varphi)\rangle\right| \\
&=
\left| - \langle \fu, \cA_\varepsilon \nabla( x_{d+1}^{\alpha/2}\varphi)\rangle\right| = 
\left| - \langle \fu, \nabla( \cA_\varepsilon( x_{d+1}^{\alpha/2}\varphi))\rangle\right| = 
\left| \langle \nabla \fu, \cA_\varepsilon( x_{d+1}^{\alpha/2}\varphi)\rangle\right| \\
&
\leq \|\nabla \fu \|_{L^2_\alpha(\R^{d+1}_+)} 
\|x_{d+1}^{-\alpha/2} \cA_\varepsilon (x_{d+1}^{\alpha/2} \varphi)\|_{L^2(\R^{d+1}_+)} 
\stackrel{\text{Step 2}}{\leq} 
\|\nabla \fu\|_{L^2_\alpha(\R^{d+1}_+)} \|\varphi\|_{L^2(\omega)}. 
\end{align*}
Combining this with the observation 
\begin{equation}
\label{eq:grad-uepsilon-10}
\|x_{d+1}^{\alpha/2} \nabla \fu_\varepsilon\|_{L^2(\omega)} = 
\sup_{\varphi \in \dD(\omega)} 
\frac{\langle x_{d+1}^{\alpha/2} \nabla \fu_\varepsilon,\varphi\rangle}{\|\varphi\|_{L^2(\omega)}} 
\end{equation}
gives us  $\|\nabla \fu_\varepsilon\|_{L^2_\alpha(\omega)} 
\leq \|\nabla \fu\|_{L^2_\alpha(\R^{d+1}_+)}$. The 
claim \eqref{eq:grad-uepsilon} now follows since $\omega$ is arbitrary. 

\emph{Step 4:} For $\alpha \in (-1,0]$ we have for 
every bounded open $\omega \subset \R^{d+1}_+$
the existence of $C_\omega > 0$ such that for $\varepsilon \in (0,1]$ 
\begin{align*}
\|\nabla \fu_\varepsilon\|_{L^2(\omega)} 
\leq C_\omega \|\nabla \fu\|_{L^2_\alpha(\R^{d+1}_+)}. 
\end{align*}
The proof follows by inspecting the procedure of Step~2 and essentially
using Step~2 with $\alpha = 0$ there. 

\emph{Step 5:} Steps~3 and 4 show that $\fu \in H^1_{\rm loc}(\R^{d+1}_+)$: 
Fix a bounded, open and connected $\omega \subset \R^{d+1}_+$. Fix
a $\varphi \in \dD(\omega)$ with  $(\varphi,1)_{L^2(\omega)} \ne 0$. Exploiting 
the norm equivalence 
$$
\|v\|_{H^1(\omega)} \sim \|\nabla v\|_{L^2(\omega)} 
+ |(\varphi,v)_{L^2(\omega)}| \qquad \forall v \in H^1(\omega)
$$
we infer from Steps~3, 4, and the observation 
$\lim_{\varepsilon\rightarrow 0} (\fu_\varepsilon,\varphi)_{L^2(\omega)}
 = \langle \fu,\varphi\rangle$
that $(\fu_\varepsilon)_{\varepsilon \in (0,1]}$ 
is uniformly bounded in $H^1(\omega)$. Thus, a subsequence converges
weakly in $H^1(\omega)$ and strongly in $L^2(\omega)$ 
to a limit, 
which is the representation of the distribution $\fu$ on $\omega$. 

\emph{Step 6:} Claim: For any bounded open $\omega \subset \R^{d+1}_+$
we have $\fu \in L^2_\alpha(\omega)$. It suffices to show norm bounds 
for bounded open sets of the form $\omega = \omega^0 \times (0,1)$ 
with $\omega^0 \subset \R^d$. 
For that, consider again the regularized
functions $\fu_\varepsilon$ and assume, additionally (with the aid of 
a cut-off function), that 
$\fu_\varepsilon(x,x_{d+1}) = 0$ for $x_{d+1} \ge 1$ and $x \in \R^d$. 
Then for $x_{d +1} \in (0,1)$ we have  
\begin{align}
\label{eq:lemma-BL1-fct-10}
\fu_\varepsilon(x,x_{d+1}) = - \int_{x_{d+1}}^1 \partial_{d+1} \fu_\varepsilon(x,t)\,dt. 
\end{align}
For $\alpha \in (-1,0]$, we square, multiply by $x_{d+1}^\alpha$, and integrate
to get 
\begin{align*}
\|\fu_\varepsilon\|^2_{L^2_\alpha(\omega)} 
\leq C \|\partial_{d+1} \fu_\varepsilon \|^2_{L^2(\omega)}. 
\end{align*}
Since 
$\|\partial_{d+1} \fu_\varepsilon \|_{L^2(\omega)}$ can be controlled
uniformly in $\varepsilon \in (0,1]$ by Steps~4, 5 the proof is complete
for $\alpha \in (-1,0]$. 
For $\alpha \in [0,1)$, we square \eqref{eq:lemma-BL1-fct-10}, 
use a Cauchy-Schwarz inequality on the right-hand side and integrate to get 
\begin{align*}
\|\fu_\varepsilon\|^2_{L^2(\omega)} 
\leq C_\alpha  \|\partial_{d+1} \fu_\varepsilon\|^2_{L^2(\omega)}. 
\end{align*}
Again, Steps~3, 5 allows us to control the right-hand side uniformly
in $\varepsilon$. 
\end{proof}

\begin{proof}[Proof of Lemma~\ref{lem:bl:dense}]
  We choose an open cover $\left( U_j \right)_{j\in\N}$ of $\R^{d+1}_+$ by bounded sets and a partition of unity
  $\left( \psi_j \right)_{j\in\N}$ subordinate to this cover. For $\fu\in\BL^1_\alpha(\R^{d+1}_+)$ we have
  $\fu\psi_j\in H^1_\alpha(\R^{d+1}_+)$, and according to Lemma~\ref{lem:density}, $\fu\psi_j$ can be approximated to arbitrary accuracy
  by a function $\varphi_j\in C^\infty(\overline{\R^{d+1}_+})\cap H^1_\alpha(\R^{d+1}_+)$ in the norm  of $H^1_\alpha(\R^{d+1}_+)$
  and hence also in the norm of $\BL^1_\alpha(\R^{d+1}_+)$. By construction, only a finite number of $\varphi_j$ overlap, and hence
  $\sum_{j=0}^\infty \varphi_j \in C^\infty(\overline{\R^{d+1}_+})\cap \BL^1_\alpha(\R^{d+1}_+)$.
\end{proof}
\begin{proof}[Proof of Lemma~\ref{lem:bl:trace}]
  Using an appropriate cut-off function, this is a simple consequence 
  of Lemmas~\ref{lem:bl:dense}, \ref{lem:mult:trace}, and~\ref{lem:trace}.
\end{proof}
\begin{proof}[Proof of Corollary~\ref{cor:poincare}]
  Due to the density result of Lemma~\ref{lem:bl:dense} and the definition of the trace operator, it suffices to show
  \begin{align*}
    \norm{\fu}{L^2_\alpha(K)} \lesssim \norm{\trace\,\fu}{L^2(K')} + \norm{\nabla \fu}{L^2_\alpha(K)}
  \end{align*}
  for all $\fu\in C^\infty(\overline{\R^{d+1}_+})$.
  Using the abbreviation $v(x) = \fu(x_1,\dots,x_d,x)$, we note that due to H\"older's inequality,
  \begin{align*}
    \abs{v(x)}^2 &\lesssim \abs{v(0)}^2 + \abs{\int_0^x v'(t)\,dt}^2
    \lesssim \abs{v(0)}^2 + x^{1-\alpha} \norm{v'}{L^2_\alpha(0,b_{d+1})}^2.
  \end{align*}
  Multiplying the last equation by $x^\alpha$ and integrating over $K$ finishes the proof.
\end{proof}
\section{$\mathcal{H}$-matrix approximability}
For any  subset $D\subset{\R^{d+1}_+}$ define the space
\begin{align*}
  \hH_h(D) := \{ \fu \in H^1_\alpha(D) \mid
    & \exists \wilde\fu\in \BL^1_\alpha(\R^{d+1}_+), \trace\,\wilde\fu \in\sS^1_0(\tT_h) \subset H^s_0(\R^d;\Omega)
    \text{ such that } \fu|_D = \wilde\fu|_D,\\
    &a(\wilde\fu,\vV) = 0 \text{ for all } \vV\in \BL^1_{\alpha,0}(\R^{d+1}_+) \}
\end{align*}
and the space with additional orthogonality
\begin{align*}
  \hH_{h,0}(D) := \{ \fu \in H^1_\alpha(D) \mid
    & \exists \wilde\fu\in \BL^1_\alpha(\R^{d+1}_+), \wilde\fu\in\sS^1_0(\tT_h) \subset H^s_0(\R^d;\Omega)
    \text{ such that } \fu|_D = \wilde\fu|_D,\\
    &a(\wilde\fu,\vV) = 0 \text{ for all } \vV\in \BL^1_{\alpha,0}(\R^{d+1}_+)\\
    &\text{ and all } \vV \in \BL^1_\alpha(\R^{d+1}_+)\text{ with } \trace\,\vV \in \sS^1_0(\tT_h) \subset H^s_0(\R^d;\Omega) 
  \text{ and } \supp(\trace\,\vV)\subset \overline D\cap\R^d \}.
\end{align*}
Define the cubes with side length $R$ (henceforth called ``box'') by 
\begin{align}\label{eq:dif:hbox}
  B_R:= B_R^0\times (0,R)\subset\R^{d+1}.
\end{align}
We say that two boxes $B_{R_1}$
and $B_{R_2}$ are concentric if their projections on $\R^d$, i.e., 
the corresponding cubes
$B_{R_1}^0$ and $B_{R_2}^0$, share the same barycenter and are concentric.
For $h>0$, we define on $H^1_\alpha(B_R)$ the norm
\begin{align*}
  \enorm{\fu}_{h,R}^2 := \left( \frac{h}{R} \right)^2 \norm{\nabla \fu}{L^2_\alpha(B_R)}^2
  + \frac{1}{R^2} \norm{\fu}{L^2_\alpha(B_R)}^2.
\end{align*}
We have the following Caccioppoli-type inequality.
\begin{lemma}\label{lem:caccioppoli}
  Let $\Omega\subset\R^d$ be a Lipschitz domain.
  Let $R\in (0,8\,\diam(\Omega))$, $\delta \in (0,1)$, and $h>0$ be such that $16h \leq \delta R$.
  Let $B_R$ and $B_{(1+\delta)R}$ be two concentric boxes. Then, there exists a constant
  $C>0$ depending only on $\Omega$, $d$, and the $\gamma$-shape regularity of $\tT_h$,
  such that for all $\fu \in \hH_{h,0}(B_{(1+\delta)R})$ it holds
  \begin{align}\label{eq:caccioppoli}
    \norm{\nabla \fu}{L^2_\alpha(B_R)} \leq C \frac{1+\delta}{\delta} \enorm{\fu}_{h,(1+\delta)R}.
  \end{align}
\end{lemma}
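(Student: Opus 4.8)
The plan is to prove this Caccioppoli-type inequality by the classical cut-off function argument, adapted to the degenerate weight $x_{d+1}^\alpha$ and to the discrete orthogonality built into $\hH_{h,0}$. First I would introduce a smooth cut-off function $\eta$ with $\eta\equiv 1$ on $B_R^0$, $\supp\eta\subset B_{(1+\delta/2)R}^0$ (say), and $\|\nabla\eta\|_{L^\infty}\lesssim \tfrac{1}{\delta R}$; since we work with boxes $B_R = B_R^0\times(0,R)$ that are extended only in the tangential directions when we enlarge $R$ (or, more precisely, I would use the concentric-box structure so that $\eta$ depends only on the $\R^d$-variables and is $1$ on a neighbourhood of $B_R$), the function $\eta^2\fu$ is an admissible test direction. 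The core identity is then $a(\wilde\fu,\eta^2\wilde\fu) = 0$ coming from the PDE-orthogonality $a(\wilde\fu,\vV)=0$ for $\vV\in\BL^1_{\alpha,0}$, provided $\trace(\eta^2\wilde\fu)$ either vanishes or is cancelled by the extra orthogonality. Here is where $\hH_{h,0}$ enters: $\trace\wilde\fu\in\sS^1_0(\tT_h)$, but $\eta^2\cdot\trace\wilde\fu$ is \emph{not} piecewise linear, so one replaces it by its nodal interpolant $I_h(\eta^2\trace\wilde\fu)$, which is a legitimate test function in $\sS^1_0(\tT_h)$ supported in $\overline{B_{(1+\delta)R}}\cap\R^d$, hence killed by the second orthogonality in the definition of $\hH_{h,0}(B_{(1+\delta)R})$. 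The interpolation error $\eta^2\trace\wilde\fu - I_h(\eta^2\trace\wilde\fu)$ is then controlled by standard FEM estimates in terms of $h$, $\|\nabla\eta\|_\infty$, and Sobolev norms of $\trace\wilde\fu$, and pushed back into the bulk via the trace/lifting results (Lemmas~\ref{lem:mult:trace}, \ref{lem:trace}) to produce the $h/R$ factor.

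The key steps, in order, are: (1) reduce to $\wilde\fu\in C^\infty(\overline{\R^{d+1}_+})\cap\BL^1_\alpha$ via the density Lemma~\ref{lem:bl:dense} so all manipulations are licit; (2) choose $\eta$ as above and expand $\nabla(\eta^2\wilde\fu) = \eta^2\nabla\wilde\fu + 2\eta\wilde\fu\nabla\eta$; (3) write $0 = a(\wilde\fu,\eta^2\wilde\fu) - a(\wilde\fu, \text{(trace correction)})$, i.e. test with $\vV = \eta^2\wilde\fu - \mathcal{E}(I_h(\eta^2\trace\wilde\fu))$ which lies in $\BL^1_{\alpha,0}$, so that $a(\wilde\fu,\eta^2\wilde\fu) = a(\wilde\fu, \mathcal{E}(I_h(\cdots)))$ and the right-hand side is handled by the extra $\hH_{h,0}$-orthogonality up to the interpolation defect; (4) from the resulting identity $\int_{B_{(1+\delta)R}} x_{d+1}^\alpha \eta^2|\nabla\wilde\fu|^2 = -2\int x_{d+1}^\alpha \eta\wilde\fu\,\nabla\eta\cdot\nabla\wilde\fu + (\text{interpolation terms})$, apply Cauchy--Schwarz and Young's inequality to absorb $\tfrac12\int x_{d+1}^\alpha\eta^2|\nabla\wilde\fu|^2$ to the left; (5) collect the remaining terms, which are bounded by $\|\nabla\eta\|_\infty^2\|\wilde\fu\|_{L^2_\alpha(B_{(1+\delta)R})}^2 \lesssim \tfrac{1}{\delta^2 R^2}\|\wilde\fu\|_{L^2_\alpha}^2$ plus an $(h/(\delta R))^2\|\nabla\wilde\fu\|_{L^2_\alpha}^2$-type contribution from interpolation, and recognize the right-hand side as $\lesssim \big(\tfrac{1+\delta}{\delta}\big)^2\enorm{\fu}_{h,(1+\delta)R}^2$ using the definition of $\enorm{\cdot}_{h,R}$ and $16h\le\delta R$.

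The main obstacle I anticipate is the trace-correction bookkeeping in step (3)--(4): one must carefully verify that $\vV := \eta^2\wilde\fu - \mathcal{E}(I_h(\eta^2\trace\wilde\fu))$ genuinely belongs to $\BL^1_{\alpha,0}(\R^{d+1}_+)$ (its trace is $\eta^2\trace\wilde\fu - I_h(\eta^2\trace\wilde\fu)$, which is \emph{not} zero, so in fact one needs a two-part test: $\eta^2\wilde\fu - \mathcal{E}(I_h(\eta^2\trace\wilde\fu))\in\BL^1_{\alpha,0}$ is false unless one is more careful). The correct route is: test the first orthogonality with $\vV_0 := \eta^2\wilde\fu - \mathcal{E}_h$, where $\mathcal{E}_h$ is \emph{any} lifting of $I_h(\eta^2\trace\wilde\fu)\in\sS^1_0(\tT_h)$ — then $\vV_0\in\BL^1_\alpha$ with $\trace\vV_0 = \eta^2\trace\wilde\fu - I_h(\eta^2\trace\wilde\fu)$; split $\vV_0 = (\vV_0 - \mathcal{E}(\trace\vV_0)) + \mathcal{E}(\trace\vV_0)$ where the first summand is in $\BL^1_{\alpha,0}$ and the second has a controllably small trace. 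Getting the constants to come out as $C\tfrac{1+\delta}{\delta}$ rather than something worse, and ensuring the interpolation estimate for $\eta^2 v$ with $v\in\sS^1_0(\tT_h)$ on a quasi-uniform mesh correctly produces the factor $h/(\delta R)$ (using $\|\nabla\eta\|_\infty\lesssim 1/(\delta R)$ and an inverse estimate on $v$), is the delicate part; everything else is a routine Caccioppoli computation with the weight $x_{d+1}^\alpha$ carried along passively since $\eta$ does not depend on $x_{d+1}$.
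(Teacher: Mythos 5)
Your proposal follows essentially the same route as the paper's proof: a weighted Caccioppoli argument with a cut-off $\eta$, in which $\int x_{d+1}^\alpha\nabla\fu\cdot\nabla(\eta^2\fu)$ is reduced --- via the two orthogonalities built into $\hH_{h,0}$ --- to $a(\fu,\,\cdot\,)$ applied to a lifting of the interpolation defect $\trace(\eta^2\fu)-I_h\trace(\eta^2\fu)$, which is then estimated by interpolation, inverse, and multiplicative trace inequalities and finally absorbed by Young's inequality. Your ``obstacle'' paragraph correctly identifies and resolves the one genuinely delicate point (that $\eta^2\fu-\eE(I_h(\eta^2\trace\fu))$ does \emph{not} have vanishing trace); the paper's decomposition is precisely your corrected one, with the extension of the defect additionally multiplied by a second cut-off $\wilde\eta$ to keep it supported near the box. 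The only slip is your parenthetical suggestion that $\eta$ could be taken independent of $x_{d+1}$: since $B_R=B_R^0\times(0,R)$ is bounded in the vertical direction as well, an $x_{d+1}$-independent cut-off would make $\supp(\eta\fu)$ an unbounded vertical strip, and the resulting right-hand side could not be controlled by $\enorm{\fu}_{h,(1+\delta)R}$; the cut-off must, as in the paper, be supported in $B_{(1+\delta/4)R}$ and hence decay in $x_{d+1}$ too.
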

\begin{proof}
In the proof, various boxes will appear. They will always be assumed to be concentric to $B_R$.
  Choose a function $\eta \in W^{1,\infty}(\R^{d+1})$ 
with $(\trace\eta)|_\Omega\in \sS^1_0(\tT_h)$,
  $\eta \equiv 1$ on $B_R$, $\supp(\eta) \subset B_{(1+\delta/4)R}$,
  $0\leq\eta\leq 1$, and $\norm{\nabla \eta}{\infty} \lesssim \left( \delta R \right)^{-1}$. We calculate
  \begin{align*}
    \norm{\nabla \fu}{L^2_\alpha(B_R)}^2
    \leq \norm{\nabla (\eta \fu)}{L^2_\alpha(B_{(1+\delta)R})}^2
    &= \int_{B_{(1+\delta)R}}x_{d+1}^\alpha \nabla (\eta\fu)\cdot\nabla(\eta\fu)\,dx\\
    &= \int_{B_{(1+\delta)R}} x_{d+1}^\alpha \fu^2 \left( \nabla\eta \right)^2\,dx
    + \int_{B_{(1+\delta)R}} x_{d+1}^\alpha \nabla\fu \cdot\nabla(\eta^2\fu)\,dx.
  \end{align*}
  We first deal with the last integral on the right-hand side. Due to the support properties
  of $\eta$ and the orthogonality properties of space $\hH_{h,0}(B_{(1+\delta)R})$, we see
  \begin{align*}
    \int_{B_{(1+\delta)R}} x_{d+1}^\alpha \nabla\fu \cdot\nabla(\eta^2\fu)\,dx
    &= \int_{(1+\delta)R} x_{d+1}^\alpha \nabla\fu
    \cdot\nabla \left( \wilde\eta\eE(\trace(\eta^2\fu) - I_h\trace(\eta^2\fu))\right)\,dx\\
    &\leq \norm{\nabla \fu}{L^2_\alpha(B_{(1+\delta)R})} \cdot
    \norm{\nabla \left( \wilde\eta\eE(\trace(\eta^2\fu)
    - I_h\trace(\eta^2\fu))\right)}{L^2_\alpha(B_{(1+\delta)R})}
  \end{align*}
  where $\wilde\eta$ is a cut-off function
  with support contained $B_{(1+3\delta/4)R}^0 \times (0,3\delta R/4)$
  and $\wilde\eta\equiv 1$ on $B_{(1+\delta/2)R}^0\times (0,\delta R/2)$,
  such that $\norm{\nabla \wilde\eta}{\infty} \lesssim (\delta R)^{-1}$.
  Furthermore, $I_h:C(\overline{\Omega})\cap H^1_0(\Omega) \rightarrow S^{1}_0(\tT_h) $ is the usual nodal interpolation operator (extended by zero outside 
  $\Omega$).
  Then, using Lemma~\ref{lem:ext}, we obtain
  \begin{align*}
    & \norm{\nabla \left( \wilde\eta\eE(\trace(\eta^2\fu)
    - I_h\trace(\eta^2\fu))\right)}{L^2_\alpha(B_{(1+\delta)R})}
     \\ 
    & \quad \leq \norm{\nabla \left( \eE(\trace(\eta^2\fu)
    - I_h\trace(\eta^2\fu))\right)}{L^2_\alpha(B_{(1+\delta)R})}
     + \left( \delta R \right)^{-1} 
    \norm{\eE(\trace(\eta^2\fu)
    - I_h\trace(\eta^2\fu))}{L^2_\alpha(B_{(1+3\delta/4)R})}\\
    &\quad \lesssim \norm{\trace(\eta^2\fu)- I_h\trace(\eta^2\fu)}{H^s(\R^d)}
    + \left( \delta R \right)^{-s} \norm{\trace(\eta^2\fu)- I_h\trace(\eta^2\fu)}{L^2(\R^d)}.
  \end{align*}
  For $r\in[0,1]$ it holds
  \begin{align}\label{lem:cacc:eq4}
    \norm{\trace(\eta^2\fu)- I_h\trace(\eta^2\fu)}{H^r(\R^d)}^2
    \lesssim h^{4-2r} \sum_{K\in\tT_h} \snorm{\trace(\eta^2\fu)}{H^2(K)}^2,
  \end{align}
  and a short calculation, cf.~\cite{FMP_NumerMath_15}, and an inverse estimate show that
  \begin{align*}
    \snorm{\trace(\eta^2\fu)}{H^2(K)}^2
    &\lesssim \frac{1}{(\delta R)^2} \norm{\nabla\trace(\eta \fu)}{L^2(K)}^2
    + \frac{1}{(\delta R)^4} \norm{\trace \fu}{L^2(K)}^2\\
    &\lesssim \frac{h^{2s-2}}{(\delta R)^2} \norm{\trace (\eta \fu)}{H^s(K)}^2
    + \frac{1}{(\delta R)^4} \norm{\trace \fu}{L^2(K)}^2.
  \end{align*}
  By the support properties of $\eta$, the sum in~\eqref{lem:cacc:eq4} extends over
  elements $K\cap B_{(1+\delta/4)R}\neq\emptyset$. As $h\leq (\delta R)/16$, it holds
  $\bigcup_{K\cap B_{(1+\delta/4)R}^0\neq\emptyset}K \subset B_{(1+\delta/2)R}^0$.
  Then, using $h/(\delta R)\leq 1$, we conclude that
  \begin{align*}
    \norm{\nabla \left( \wilde\eta\eE(\trace(\eta^2\fu)
    - I_h\trace(\eta^2\fu))\right)}{L^2_\alpha(B_{(1+\delta)R})}
    \lesssim \frac{h}{\left( \delta R \right)}\norm{\trace (\eta \fu)}{H^s(\R^d)}
    + \frac{h^{2-s}}{(\delta R)^2}\norm{\trace \fu}{L^2(B_{(1+\delta/2)R}^0)}.
  \end{align*}
  Choosing a cut-off function $\eta_2$ with $\eta_2\equiv 1$ on $B_{(1+\delta/2)R}$ and
  support contained in $B_{(1+3\delta/4)R}$ and employing
  the multiplicative trace inquality from Lemma~\ref{lem:mult:trace} we see
  \begin{align*}
    \norm{\trace \fu}{L^2(B_{(1+\delta/2)R}^0)} &\leq \norm{\trace(\eta_2\fu)}{L^2(B_{(1+3\delta/4)R}^0)}
    \leq C_{\rm tr} \norm{\eta_2\fu}{L^2_\alpha(\R^{d+1})}^{s}
    \norm{\nabla (\eta_2\fu)}{L^2_\alpha(\R^{d+1})}^{1-s}\\
    &\lesssim \frac{1}{(\delta R)^{1-s}}\norm{\fu}{L^2_\alpha(B_{(1+\delta)R})}
    + \norm{\fu}{L^2_\alpha(B_{(1+\delta)R})}^s\norm{\nabla\fu}{L^2_\alpha(B_{(1+\delta)R})}^{1-s}. 
  \end{align*}
  Together with the boundedness of the trace operator asserted in 
  Lemma~\ref{lem:trace}, i.e.,  
  $\norm{\trace( \eta\fu)}{H^s(\R^d)} \lesssim \norm{\fu}{L^2_\alpha(B_{(1+\delta)R})} +
  \norm{\nabla(\eta\fu)}{L^2_\alpha(B_{(1+\delta)R})}$,
  this implies
  \begin{align*}
    \int_{B_{(1+\delta)R}} x_{d+1}^\alpha \nabla\fu \cdot\nabla(\eta^2\fu)\,dx
    &\lesssim \norm{\nabla \fu}{L^2_\alpha(B_{(1+\delta)R})} \cdot
    \Bigl( 
    \frac{h}{\delta R}\norm{\fu}{L^2_\alpha(B_{(1+\delta)R})} +
    \frac{h}{\delta R}\norm{\nabla(\eta\fu)}{L^2_\alpha(B_{(1+\delta)R})}\\
    &+ \frac{h^{2-s}}{(\delta R)^{2+(1-s)}}\norm{\fu}{L^2_\alpha(B_{(1+\delta)R})}
    + \frac{h^{2-s}}{(\delta R)^2}\norm{\fu}{L^2_\alpha(B_{(1+\delta)R})}^s\norm{\nabla\fu}{L^2_\alpha(B_{(1+\delta)R})}^{1-s}
    \Bigr). 
  \end{align*}
  The four products on the right-hand side are estimated with Young's inequality: the first
  three ones using the form $ab \leq \varepsilon a^2 + \frac{1}{4\varepsilon}b^2$, and
  the last one with exponents $\frac{2}{2-s}$ and $\frac{2}{s}$. We conclude
  that there are positive constants $C_1$ and $C_2$ such that
  \begin{align*}
   \norm{\nabla (\eta \fu)}{L^2_\alpha(B_{(1+\delta)R})}^2
   \leq C_1\frac{1}{(\delta R)^2}\norm{\fu}{L^2_\alpha(B_{(1+\delta)R})}^2
   + C_2\frac{h^2}{(\delta R)^2} \norm{\nabla\fu}{L^2_\alpha(B_{(1+\delta)R})}^2
   + \frac{1}{2}\norm{\nabla(\eta\fu)}{L^2_\alpha(B_{(1+\delta)R})}^2.
  \end{align*}
  Subtracting the last term from the left-hand side finishes the proof.
\end{proof}
Denote by $\Pi_{h,R}:\left( H^1_\alpha(B_R),\enorm{\cdot}_{h,R} \right)
\rightarrow\left( \hH_{h,0}(B_R),\enorm{\cdot}_{h,R} \right)$
the orthogonal projection. For $\kK_H$ a shape-regular triangulation
of $\R^{d+1}_+$ of mesh width $H$, denote by
$\Pi_H:H^1_\alpha(\R^{d+1}_+)\rightarrow\sS^1(\kK_H)$ the
operator from~\cite{NochettoOS_16}.
\begin{lemma}\label{lem:aprox:1}
  Let $\delta \in (0,1)$ and $R\in(0,4\,\diam(\Omega))$ be such that $16h \leq \delta R$.
  Let $B_R$, $B_{(1+\delta)R}$, and $B_{(1+2\delta)R}$ 
  be three concentric boxes. Let $\fu\in \hH_{h,0}(B_{(1+2\delta)R})$ 
  and suppose that $16H\leq \delta R$.
  Let $\eta \in C^\infty_0(\R^{d+1})$ with $\supp(\eta) \subset B_{(1+\delta)R}$
  and $\eta= 1$ on $B_R$.
  Then it holds
  \begin{enumerate}[(i)]
    \item
\label{item:lem:approx:1-i}
$(\fu - \Pi_{h,R}\Pi_H (\eta \fu))|_{B_R}\in \hH_{h,0}(B_R)$;
    \item
\label{item:lem:approx:1-ii}
$\enorm{\fu-\Pi_{h,R}\Pi_H(\eta\fu)}_{h,R} \leq
      C_{\rm app}\frac{1+2\delta}{\delta}\left( \frac{h}{R}+\frac{H}{R} \right)\enorm{u}_{h,(1+2\delta)R}$;
    \item
\label{item:lem:approx:1-iii}
$\dim W \leq C_{\rm app} \left( \frac{(1+2\delta)R}{H} \right)^{d+1}$,
      where $W := \Pi_{h,R}\Pi_H\eta \hH_{h,0}(B_{(1+2\delta)R})$.
  \end{enumerate}
\end{lemma}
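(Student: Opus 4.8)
The strategy is to exploit the two-level structure already set up: first use the global quasi-interpolant $\Pi_H$ from \cite{NochettoOS_16} to approximate $\fu$ (cut off by $\eta$) by a finite element function on the coarse mesh $\kK_H$, and then apply the local Galerkin-type projection $\Pi_{h,R}$ onto $\hH_{h,0}(B_R)$ to restore the discrete-harmonicity structure. Step \eqref{item:lem:approx:1-i} is essentially a definitional matter: since $\eta\equiv 1$ on $B_R$, on the smaller box $B_R$ one has $\eta\fu=\fu$, and $\Pi_{h,R}$ maps into $\hH_{h,0}(B_R)$ by construction; more precisely, $\fu\in\hH_{h,0}(B_{(1+2\delta)R})$ restricts to an element of $\hH_{h,0}(B_R)$, and $\Pi_{h,R}\Pi_H(\eta\fu)\in\hH_{h,0}(B_R)$ by the mapping property of the projection, so the difference lies in $\hH_{h,0}(B_R)$ because this is a linear space. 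One subtlety to check is that $\Pi_H(\eta\fu)$ is an admissible argument for $\Pi_{h,R}$ — it lies in $H^1_\alpha(B_R)$ — which follows from the $H^1_\alpha$-stability of $\Pi_H$.

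For \eqref{item:lem:approx:1-ii} I would insert the intermediate quantity $\Pi_{h,R}(\eta\fu)$ and use the triangle inequality in the $\enorm{\cdot}_{h,R}$ norm:
\begin{align*}
\enorm{\fu-\Pi_{h,R}\Pi_H(\eta\fu)}_{h,R}
\le \enorm{\fu-\Pi_{h,R}(\eta\fu)}_{h,R}
+ \enorm{\Pi_{h,R}(\eta\fu-\Pi_H(\eta\fu))}_{h,R}.
\end{align*}
Because $\eta\fu=\fu$ on $B_R$ and $\fu|_{B_R}\in\hH_{h,0}(B_R)$, one has $\Pi_{h,R}(\eta\fu)=\fu$ on $B_R$ up to the part of $\eta\fu$ not already in the space; in fact $\Pi_{h,R}$ reproduces elements of $\hH_{h,0}(B_R)$, so the first term vanishes (or is controlled by the difference $\eta\fu-\fu$ which is supported away from $B_R$ — I would phrase the argument so this term is exactly zero by the reproduction property). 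For the second term, the $\enorm{\cdot}_{h,R}$-stability of $\Pi_{h,R}$ (it is an orthogonal projection in that norm, hence norm-nonincreasing) reduces matters to $\enorm{\eta\fu-\Pi_H(\eta\fu)}_{h,R}$. Here the approximation property of $\Pi_H$ on the coarse mesh of width $H$ gives an $O(H)$ gain in $L^2_\alpha$ against $\|\nabla(\eta\fu)\|_{L^2_\alpha}$, while the gradient term picks up the factor $h/R$ from the definition of $\enorm{\cdot}_{h,R}$; combining with the product rule estimate $\|\nabla(\eta\fu)\|_{L^2_\alpha(B_{(1+\delta)R})}\lesssim \|\nabla\fu\|_{L^2_\alpha}+(\delta R)^{-1}\|\fu\|_{L^2_\alpha}$ and then invoking the Caccioppoli inequality of Lemma~\ref{lem:caccioppoli} (applicable because $16h\le\delta R$ and $\fu\in\hH_{h,0}(B_{(1+2\delta)R})$) to absorb $\|\nabla\fu\|_{L^2_\alpha(B_{(1+\delta)R})}$ in terms of $\enorm{\fu}_{h,(1+2\delta)R}$ yields the claimed bound with the factor $\tfrac{1+2\delta}{\delta}(\tfrac hR+\tfrac HR)$.

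For the dimension bound \eqref{item:lem:approx:1-iii}, the point is simply that $W\subset\operatorname{range}\Pi_H$ restricted to functions supported (via $\eta$) in $B_{(1+\delta)R}\subset B_{(1+2\delta)R}$; the number of coarse-mesh degrees of freedom of $\sS^1(\kK_H)$ whose basis functions meet a box of side length $O((1+2\delta)R)$ is $O(((1+2\delta)R/H)^{d+1})$ by shape-regularity of $\kK_H$, and $\dim W$ cannot exceed this since $\Pi_{h,R}$ only decreases dimension. The main obstacle, and the step deserving the most care, is the first term in \eqref{item:lem:approx:1-ii}: one must argue precisely why $\enorm{\fu-\Pi_{h,R}(\eta\fu)}_{h,R}$ either vanishes or is harmless, which hinges on the reproduction property of the orthogonal projection $\Pi_{h,R}$ on $\hH_{h,0}(B_R)$ together with the fact that $\eta\fu$ and $\fu$ agree on $B_R$ — so that, restricted to $B_R$, $\eta\fu$ already lies in the target space. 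The interplay of the two mesh scales $h$ (fine, defining $\hH_{h,0}$ and $\enorm{\cdot}_{h,R}$) and $H$ (coarse, defining $\Pi_H$) also requires bookkeeping care, but is routine given the hypothesis $16H\le\delta R$.
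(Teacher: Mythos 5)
Your proof is correct and follows essentially the same route as the paper: the identity $\fu - \Pi_{h,R}\Pi_H(\eta\fu) = \Pi_{h,R}(\eta\fu - \Pi_H(\eta\fu))$ on $B_R$ (your ``first triangle-inequality term vanishes''), the norm-nonincreasing property of the orthogonal projection, the $\Pi_H$ approximation estimate combined with a product rule, and Lemma~\ref{lem:caccioppoli} to close the gradient term. The only cosmetic difference is that the paper writes the identity directly rather than via a triangle inequality, and disposes of (iii) by simply citing the local construction of $\Pi_H$, but the underlying arguments are identical.
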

\begin{proof}
To see (\ref{item:lem:approx:1-i}), 
note that if $\fu\in \hH_{h,0}(B_{(1+2\delta)R})$, then
  $\fu\in \hH_{h,0}(B_{R})$, and $\Pi_{h,R}$ maps into $\hH_{h,0}(B_{R})$.
To see (\ref{item:lem:approx:1-ii}), 
first note that due to the support properties of $\eta$ and the fact
  that $\Pi_{h,R}$ is the orthogonal projection, it holds
  \begin{align*}
    \enorm{\fu - \Pi_{h,R}\Pi_H(\eta\fu)}_{h,R}^2
    = \enorm{\Pi_{h,R}(\eta\fu - \Pi_H(\eta\fu))}_{h,R}^2 \leq
    \enorm{\eta\fu - \Pi_H(\eta\fu)}_{h,R}^2.
  \end{align*}
  Furthermore, due to the approximation properties of $\Pi_H$ 
  given in~\cite[Thm.~5.4]{NochettoOS_16}, we obtain
  \begin{align*}
    \enorm{\eta\fu - \Pi_H(\eta\fu)}_{h,R}^2
    &= \frac{h^2}{R^2} \norm{\nabla (\eta\fu-\Pi_H(\eta\fu))}{L^2_\alpha(B_R)}^2 + \frac{1}{R^2} \norm{\eta\fu-\Pi_H(\eta\fu)}{L^2_\alpha(B_R)}^2\\
    &\lesssim \left( \frac{h^2}{R^2} + \frac{H^2}{R^2} \right) \norm{\nabla(\eta\fu)}{L^2_\alpha(B_{(1+\delta)R})}^2\\
    &\lesssim \left( \frac{h^2}{R^2} + \frac{H^2}{R^2} \right) \left( \frac{1}{\delta^2R^2} \norm{\fu}{L^2_\alpha(B_{(1+\delta)R})}^2
    + \norm{\nabla\fu}{L^2_\alpha(B_{(1+\delta)R})}^2 \right). 
  \end{align*}
  Applying Lemma~\ref{lem:caccioppoli} with $\wilde\delta = \delta/(1+\delta)$ and $\wilde R = (1+\delta)R$, i.e.,
  $(1+\wilde\delta)\wilde R=(1+2\delta)R$, shows
  \begin{align*}
    \norm{\nabla\fu}{L^2_\alpha(B_{(1+\delta)R})}^2 \lesssim \frac{(1+2\delta)^2}{\delta^2} \enorm{\fu}_{h,(1+2\delta)R}^2. 
  \end{align*}
  Together with the trivial estimate $\norm{\fu}{L^2_\alpha(B_{(1+\delta)R})} \leq (1+2\delta)R \enorm{\fu}_{h,(1+2\delta)R}$
  we conclude  (\ref{item:lem:approx:1-ii}). 
Statement (\ref{item:lem:approx:1-iii})
follows from the local definition of the operator $\Pi_H$.
\end{proof}
\begin{lemma}\label{lem:aprox:2}
  Let $q,\kappa\in (0,1)$, $R\in(0,2\,\diam(\Omega))$, and $k\in\N$.
  Assume
  \begin{align}\label{lem:aprox:2:condition}
    h \leq \frac{\kappa qR}{64 k\max\left\{ 1,C_{\rm app} \right\}},
  \end{align}
  where $C_{\rm app}$ is the constant from Lemma~\ref{lem:aprox:1}.
  Then, there exists a finite dimensional subspace $\wat W_k$ of $\hH_h(B_{(1+\kappa)R})$ with dimension
  $\dim \wat W_k \leq C_{\rm dim} \left( \frac{1+\kappa^{-1}}{q} \right)^{d+1} k^{d+2}$ such that
  for every $\fu\in\hH_{h,0}(B_{(1+\kappa)R})$ it holds
  \begin{align*}
    \min_{\vV\in \wat W_k} \enorm{\fu-\vV}_{h,R} \leq q^k \enorm{\fu}_{h,(1+\kappa)R}.
  \end{align*}
\end{lemma}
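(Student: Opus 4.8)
The plan is to iterate the single-step approximation of Lemma~\ref{lem:aprox:1} across a telescoping family of concentric boxes shrinking from $B_{(1+\kappa)R}$ down to $B_R$ in $k$ equal increments. Each application peels off an approximant from a low-dimensional space and, by part~(i) of that lemma, leaves a residual that again lies in an $\hH_{h,0}$-space on the next smaller box, while the relevant $\enorm{\cdot}$-norm is contracted; after $k$ steps the residual has been multiplied by $q^k$ and the $k$ approximation spaces add up to the asserted dimension.

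Concretely, I would set $R_j := (1+\tfrac{j}{k}\kappa)R$ for $j=0,\dots,k$, so that $R_0 = R$, $R_k = (1+\kappa)R$, $R_j - R_{j-1} = \tfrac{\kappa}{k}R$, and $R \le R_j \le 2R < 4\,\diam(\Omega)$. Between $B_{R_{j-1}}$ and $B_{R_j}$ the natural gap parameter is $\delta_j := (R_j-R_{j-1})/(2R_{j-1})$, for which $(1+2\delta_j)R_{j-1} = R_j$, $\delta_j R_{j-1} = \tfrac{\kappa R}{2k}$, and $\tfrac{\kappa}{4k} \le \delta_j \le \tfrac{\kappa}{2k} < 1$. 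For the auxiliary mesh I would fix $H := \tfrac{\kappa q R}{64\,k\,\max\{1,C_{\rm app}\}}$, i.e.\ the same quantity that bounds $h$ in \eqref{lem:aprox:2:condition}; in particular $h \le H$. Then all hypotheses of Lemma~\ref{lem:aprox:1} hold at each scale $R_{j-1}$: the three boxes $B_{R_{j-1}}\subset B_{(1+\delta_j)R_{j-1}}\subset B_{R_j}$ are concentric, $R_{j-1} < 4\,\diam(\Omega)$, $\delta_j \in (0,1)$, and $16h \le 16H \le \tfrac{\kappa R}{4k} \le \delta_j R_{j-1}$.

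With this in place I would run the iteration: set $\fu_0 := \fu \in \hH_{h,0}(B_{R_k})$ and, for $i=1,\dots,k$, apply Lemma~\ref{lem:aprox:1} at the inner box $B_{R_{k-i}}$ with gap $\delta_{k-i+1}$ and outer box $B_{R_{k-i+1}}$ to $\fu_{i-1} \in \hH_{h,0}(B_{R_{k-i+1}})$, the membership being guaranteed inductively by part~(i). This produces an approximant $\vV_i := \Pi_{h,R_{k-i}}\Pi_H(\eta^{(i)}\fu_{i-1})$ in the space $W_i := \Pi_{h,R_{k-i}}\Pi_H\eta^{(i)}\hH_{h,0}(B_{R_{k-i+1}})$ with $\dim W_i \le C_{\rm app}(R_{k-i+1}/H)^{d+1} \le C_{\rm app}\bigl(128\,k\,\max\{1,C_{\rm app}\}/(\kappa q)\bigr)^{d+1}$, together with a residual $\fu_i := (\fu_{i-1}-\vV_i)|_{B_{R_{k-i}}} \in \hH_{h,0}(B_{R_{k-i}})$ obeying
\begin{align*}
  \enorm{\fu_{i-1}-\vV_i}_{h,R_{k-i}}
  &\le C_{\rm app}\frac{1+2\delta_{k-i+1}}{\delta_{k-i+1}}\Bigl(\frac{h}{R_{k-i}}+\frac{H}{R_{k-i}}\Bigr)\enorm{\fu_{i-1}}_{h,R_{k-i+1}}\\
  &\le \frac{16\,k\,C_{\rm app}\,H}{\kappa R}\,\enorm{\fu_{i-1}}_{h,R_{k-i+1}}
  \le q\,\enorm{\fu_{i-1}}_{h,R_{k-i+1}},
\end{align*}
where I used $\tfrac{1+2\delta_{k-i+1}}{\delta_{k-i+1}} \le 8k/\kappa$, $\tfrac{h}{R_{k-i}}+\tfrac{H}{R_{k-i}} \le 2H/R$, and the definition of $H$. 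Since $B_R \subset B_{R_{k-i}}$ for every $i$, the residuals telescope to $\fu_k = \fu - \sum_{i=1}^k \vV_i$ on $B_R$, and chaining the estimate above (each step lowers the box index and contracts by $q$) gives $\enorm{\fu_k}_{h,R} \le q^k\enorm{\fu_0}_{h,R_k} = q^k\enorm{\fu}_{h,(1+\kappa)R}$. I would then take $\wat W_k$ to be the sum of $W_1,\dots,W_k$, extended to $B_{(1+\kappa)R}$ canonically — every element of $\hH_{h,0}(B_{R_{k-i}})$ being the restriction of a global $\BL^1_\alpha(\R^{d+1}_+)$-function that is discretely harmonic with finite-element trace, hence of an element of $\hH_h(B_{(1+\kappa)R})$ — so that $\wat W_k \subset \hH_h(B_{(1+\kappa)R})$, $\dim\wat W_k \le \sum_{i=1}^k\dim W_i \le C_{\rm dim}\,k^{d+2}\bigl(\tfrac{1+\kappa^{-1}}{q}\bigr)^{d+1}$ (using $(\kappa q)^{-1} \le (1+\kappa^{-1})q^{-1}$), while $\fu - \sum_i\vV_i \in \wat W_k$ realizes the claimed bound.

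The genuinely delicate point is the simultaneous choice of the two scale parameters: the box spacing must be fine enough that the per-step contraction factor $C_{\rm app}\tfrac{1+2\delta}{\delta}(h+H)/R$ is $\le q$, yet $H$ must be large enough that the per-step space dimension $\sim(R/H)^{d+1}$ stays $\lesssim (k/(\kappa q))^{d+1}$; together these force $H$ to be taken independent of $h$, of size $\sim\kappa q R/k$, and hypothesis \eqref{lem:aprox:2:condition} is precisely what is then needed to secure $16h \le \delta_j R_{j-1}$ at every scale. Everything else — unwinding Lemma~\ref{lem:aprox:1} and the telescoping algebra — is routine, apart from the minor bookkeeping of identifying each local space $W_i$ with a subspace of $\hH_h(B_{(1+\kappa)R})$, which is harmless since $\enorm{\cdot}_{h,R}$ depends only on the restriction to $B_R$.
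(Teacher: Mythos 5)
Your proof is correct and takes essentially the same multi-scale telescoping route as the paper: the auxiliary scale $H$ is chosen identically, Lemma~\ref{lem:aprox:1} is iterated $k$ times across a nested family of concentric boxes interpolating between $B_R$ and $B_{(1+\kappa)R}$, and the per-step contraction factor $q$ and per-step dimension bound $\lesssim (k/(\kappa q))^{d+1}$ are chained. Your parametrization of the intermediate radii (choosing $\delta_j$ so that $(1+2\delta_j)R_{j-1}=R_j$ holds exactly) is a slightly cleaner variant of the paper's bookkeeping, and you also address the minor technicality of canonically embedding $\wat W_k$ into $\hH_h(B_{(1+\kappa)R})$ rather than just $\hH_h(B_R)$, but these are presentational differences, not a different argument.
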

\begin{proof}
  Define $H:=\frac{\kappa qR}{64k\max\left\{ 1,C_{\rm app} \right\}}$,
  then $h\leq H$.
  Define $\delta_j:=\kappa \frac{k-j}{k}$ for $j=0,\dots, k$. This yields $\kappa=\delta_0>\delta_1>\dots>\delta_k=0$.
  Now we will repeatedly apply Lemma~\ref{lem:aprox:1} $k$ times, with $\wilde R_j = (1+\delta_j)R$ and
  $\wilde\delta_j = \frac{1}{2k(1+\delta_j)}$. This can be done, as $\wilde R_j\leq 4\,\diam(\Omega)$, $\wilde\delta_j<1/2$, and
  \begin{align*}
    16 H \leq \frac{R}{4 k\max\left\{ 1,C_{\rm app} \right\}} \leq \frac{R}{2 k (1+\delta_j)} = \wilde\delta_j R\leq \wilde\delta_j \wilde R_j.
  \end{align*}
  Note that $(1+2\wilde\delta_j)\wilde R_j = (1+\delta_{j-1})R$. The first application of Lemma~\ref{lem:aprox:1}
  yields a function $\fw_1$ in a subspace $\wat W_1$ of $\hH_h(B_{(1+\delta_1)R})$ with
  $\dim W_1 \leq C_{\rm app}\left( \frac{(1+\kappa)R}{H} \right)^{d+1}$ such that
  \begin{align*}
    \enorm{\fu-\fw_1}_{h,(1+\delta_1)R}
    \leq 2C_{\rm app} \frac{1+2\wilde\delta_1}{\wilde\delta_1}\frac{H}{\wilde R_1} \enorm{\fu}_{h,(1+2\wilde\delta_1)\wilde R_1}
    = 8C_{\rm app} \frac{kH}{R} \enorm{\fu}_{h,(1+\delta_0)R}
    \leq q \enorm{\fu}_{h,(1+\delta_0)R}.
  \end{align*}
  As $\fu-\fw_1\in \hH_h(B_{(1+\delta_1)R})$, a second application of Lemma~\ref{lem:aprox:1} yields a function
  $\fw_2$ in a subspace $W_2$ of $\hH_h(B_{(1+\delta_2)R})$ such that
  \begin{align*}
    \enorm{\fu-\fw_1-\fw_2}_{h,(1+\delta_2)R}
    \leq q \enorm{\fu-\fw_1}_{h,(1+\delta_1)R} \leq q^2 \enorm{\fu}_{h,(1+\delta_0)R}.
  \end{align*}
  Applying $k$ times Lemma~\ref{lem:aprox:1}, we obtain a function $\vV=\sum_{j=1}^k \fw_j$
  that is an element of the subspace $V_k :=\sum_{j=1}^k W_j$ of $\hH_h(B_R)$ and fulfills
  $\enorm{\fu-\vV}_{h,R} \leq q^k \enorm{\fu}_{h,(1+\kappa)R}$.
\end{proof}
\begin{proposition}\label{prop:main}
  Let $\eta>0$ be a fixed admissibility parameter and $q\in (0,1)$.
  Let $(\tau,\sigma)$ be a cluster pair with admissible bounding boxes $B_{R_\tau}^0$ and $B_{R_\sigma}^0$,
  that is, $\eta\,\dist \left( B_{R_\tau}^0, B_{R_\sigma}^0 \right)\geq \diam\left( B_{R_\tau}^0 \right)$.
  Then, for each $k\in\N$ there exists a space $V_k\subset \sS^1_0(\tT_h)$ with
  $\dim V_k\leq C_{\rm dim}(2+\eta)^{d+1}q^{-(d+1)}k^{d+2}$ such that if $f\in L^2(\Omega)$ with $\supp(f)\subset B_{R_\sigma}^0\cap\Omega$,
  the solution $u_h$ of~\eqref{eq:discrete} satisfies
  \begin{align}\label{prop:main:eq}
    \min_{v \in V_k} \norm{u_h - v}{L^2(B_{R_\tau}^0)} \leq C_{\rm box} h^{-1}q^k \norm{f}{L^2(B_{R_\sigma}^0)}.
  \end{align}
\end{proposition}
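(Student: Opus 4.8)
The plan is to transport the problem to the Caffarelli--Silvestre extension, to apply the interior approximation result Lemma~\ref{lem:aprox:2} to $\lL u_h$ on a box sitting above $B_{R_\tau}^0$, and then to take traces.

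\emph{Geometry and the coarse-mesh case.} I would take $R:=R_\tau$ and $\kappa:=1/(1+\eta)\in(0,1)$, so that $1+\kappa^{-1}=2+\eta$, and let all boxes below be concentric with $B_{R_\tau}^0$. An elementary estimate for concentric cubes combined with the admissibility $\eta\,\dist(B_{R_\tau}^0,B_{R_\sigma}^0)\ge\diam(B_{R_\tau}^0)$ and $\kappa\le 1/\eta$ gives $\dist(B_{(1+\kappa)R}^0,B_{R_\sigma}^0)\ge\tfrac12\dist(B_{R_\tau}^0,B_{R_\sigma}^0)>0$, hence $\overline{B_{(1+\kappa)R}^0}$ is disjoint from $\supp f\subset B_{R_\sigma}^0$. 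Replacing $B_{R_\tau}^0$ by the minimal cube enclosing $\bigcup_{j\in\tau}\supp\psi_j\subset\overline\Omega$ only improves admissibility, so we may assume $R\le\diam(\Omega)$, as Lemma~\ref{lem:aprox:2} requires. If the mesh restriction~\eqref{lem:aprox:2:condition} of that lemma is violated, then $(R/h)^d\lesssim(k(1+\eta)/q)^d$ is already within the asserted rank bound, and I would simply take $V_k:=\operatorname{span}\{\psi_j:\supp\psi_j\cap B_{R_\tau}^0\ne\emptyset\}$, which contains $u_h$ on $B_{R_\tau}^0$, so the minimum in~\eqref{prop:main:eq} vanishes. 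Assume~\eqref{lem:aprox:2:condition} from now on.

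\emph{The key orthogonality.} The heart of the argument is to show $\lL u_h|_{B_{(1+\kappa)R}}\in\hH_{h,0}(B_{(1+\kappa)R})$, with $\lL u_h$ itself as ambient function. Membership of the restriction in $H^1_\alpha$ follows from Lemma~\ref{thm:bl}, $\trace\lL u_h=u_h\in\sS^1_0(\tT_h)$ holds by construction, and $a(\lL u_h,\VV)=0$ for $\VV\in\BL^1_{\alpha,0}(\R^{d+1}_+)$ is~\eqref{eq:minnorm:var}. For the additional orthogonality, let $\VV\in\BL^1_\alpha(\R^{d+1}_+)$ with $\trace\VV\in\sS^1_0(\tT_h)$ and $\supp(\trace\VV)\subset\overline{B_{(1+\kappa)R}^0}$; writing $\VV=\lL(\trace\VV)+\VV_0$ with $\VV_0\in\BL^1_{\alpha,0}(\R^{d+1}_+)$ and testing~\eqref{eq:discrete} with $v_h=\trace\VV$ yields $a(\lL u_h,\VV)=a(\lL u_h,\lL(\trace\VV))=\int_\Omega f\,\trace\VV=0$, the last equality since $\supp(\trace\VV)$ and $\supp f$ are disjoint. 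Thus it is precisely the interaction between the Galerkin orthogonality of $u_h$ and the extension representation of the bilinear form that makes $\lL u_h$ discrete-harmonic in the sense needed by Lemma~\ref{lem:aprox:2} over every box whose shadow avoids $\supp f$.

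\emph{Approximation and descent to $\R^d$.} Lemma~\ref{lem:aprox:2}, applied with these $R,\kappa$, produces $\wat W_k\subset\hH_h(B_{(1+\kappa)R})$ with $\dim\wat W_k\le C_{\rm dim}(2+\eta)^{d+1}q^{-(d+1)}k^{d+2}$ and $\VV^\star\in\wat W_k$ with $\enorm{\lL u_h-\VV^\star}_{h,R}\le q^k\enorm{\lL u_h}_{h,(1+\kappa)R}$. Each $\fw\in\wat W_k$ is the restriction to $B_{(1+\kappa)R}$ of some $\wilde\fw\in\BL^1_\alpha(\R^{d+1}_+)$ with $\trace\wilde\fw\in\sS^1_0(\tT_h)$; since the trace is local, $(\trace\wilde\fw)|_{B_{R_\tau}^0}$ depends only on $\fw$, so discarding the remaining nodal coefficients defines a linear map $\fw\mapsto v_\fw\in\sS^1_0(\tT_h)$ with $v_\fw|_{B_{R_\tau}^0}=(\trace\wilde\fw)|_{B_{R_\tau}^0}$; put $V_k:=\{v_\fw:\fw\in\wat W_k\}$ and $v:=v_{\VV^\star}$, so $\dim V_k\le\dim\wat W_k$. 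Writing $\ww:=\lL u_h-\wilde{\VV^\star}\in\BL^1_\alpha(\R^{d+1}_+)$ we have $\ww|_{B_R}=(\lL u_h-\VV^\star)|_{B_R}$ and $(u_h-v)|_{B_{R_\tau}^0}=(\trace\ww)|_{B_{R_\tau}^0}$. Cutting off $\ww$ by a function of $x_{d+1}$ equal to $1$ on $(0,R/2)$ and vanishing past $R$, extending by zero, and invoking the multiplicative trace inequality Lemma~\ref{lem:mult:trace} on $B_R^0$ (here $(1\pm\alpha)/2=s,\,1-s$) together with the definition of $\enorm{\cdot}_{h,R}$ and $h\le R$ gives $\norm{u_h-v}{L^2(B_{R_\tau}^0)}\le\norm{\trace\ww}{L^2(B_R^0)}\lesssim R\,h^{s-1}\enorm{\ww}_{h,R}\le R\,h^{s-1}q^k\enorm{\lL u_h}_{h,(1+\kappa)R}$. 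Finally, testing~\eqref{eq:discrete} with $u_h$ and using the Poincar\'e and trace bounds behind Lemma~\ref{lem:dirichlet:var} gives $\norm{\nabla\lL u_h}{L^2_\alpha(\R^{d+1}_+)}\lesssim\norm f{L^2(B_{R_\sigma}^0)}$, and a one-dimensional H\"older estimate as in the proof of Corollary~\ref{cor:poincare} bounds $\norm{\lL u_h}{L^2_\alpha(B_{(1+\kappa)R})}\lesssim R^{1-s}\norm f{L^2(B_{R_\sigma}^0)}$, whence $\enorm{\lL u_h}_{h,(1+\kappa)R}\lesssim R^{-s}\norm f{L^2(B_{R_\sigma}^0)}$. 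Since $R\le\diam(\Omega)$ and $R/h\ge 1$, this collapses to $\norm{u_h-v}{L^2(B_{R_\tau}^0)}\lesssim(R/h)^{1-s}q^k\norm f{L^2(B_{R_\sigma}^0)}\le C_{\rm box}h^{-1}q^k\norm f{L^2(B_{R_\sigma}^0)}$, i.e.~\eqref{prop:main:eq}.

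I expect the main obstacle to be the glue rather than any single ingredient: selecting $R$ and $\kappa$ so that the separation forced by admissibility, the constraint $R<2\diam(\Omega)$ in Lemma~\ref{lem:aprox:2}, and the target rank $C_{\rm dim}(2+\eta)^{d+1}q^{-(d+1)}k^{d+2}$ all hold at once; correctly dispatching the coarse-mesh regime where Lemma~\ref{lem:aprox:2} does not apply; and tracking the powers of $h$ and $R$ through the trace step, which is where the negative power of $h$ in~\eqref{prop:main:eq} originates.
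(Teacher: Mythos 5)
Your proposal is correct and follows essentially the same route as the paper's proof: the same choice $\kappa=(1+\eta)^{-1}$, the same case split on the mesh condition \eqref{lem:aprox:2:condition} (with the trivial full local space in the coarse regime), the application of Lemma~\ref{lem:aprox:2} to $\lL u_h$ on the box above $B_{R_\tau}^0$, and the descent to $\R^d$ via a cut-off and the multiplicative trace inequality of Lemma~\ref{lem:mult:trace} combined with the stability \eqref{eq:1} and Lemma~\ref{lem:dirichlet:var}. You additionally spell out the verification that $\lL u_h\in\hH_{h,0}(B_{(1+\kappa)R})$ via Galerkin orthogonality against traces supported away from $\supp f$, a step the paper asserts without detail.
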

\begin{proof}
  Set $\kappa := (1+\eta)^{-1}$. We distinguish two cases.
  \begin{enumerate}
    \item \textbf{Condition~\eqref{lem:aprox:2:condition} is satisfied with $R=R_\tau$:}
      As $\dist(B_{R_\tau},B_{R_\sigma}) \geq \eta^{-1}\diam(B_{R_\tau})=\eta{-1}\sqrt{d}R_\tau$, we conclude
      \begin{align*}
	\dist(B_{(1+\kappa)R_\tau},B_{R_\sigma}) \geq \dist(B_{R_\tau},B_{R_\sigma}) - \kappa R_\tau \sqrt{d}
	= \sqrt{d}R_\tau \frac{1}{\eta(1+\eta)}>0,
      \end{align*}
      hence $\lL u_h \in \hH_{h,0}(B_{(1+\kappa)R})$. Lemma~\ref{lem:aprox:2} implies that there is a
      space $\wat W_k$ with 
      \begin{align*}
        \min_{\vV\in \wat W_k} \enorm{\lL u_h-\vV}_{h,R_\tau} \leq q^k \enorm{\lL u_h}_{h,(1+\kappa)R_\tau}.
      \end{align*}
      Now
      \begin{align*}
	\enorm{\lL u_h}_{h,(1+\kappa)R_\tau} \lesssim (1+\frac{1}{R_\tau})\norm{\lL u_h}{\BL^1_\alpha(\R^{d+1}_+)}
	\lesssim (1+\frac{1}{R_\tau}) \norm{\Pi f}{L_2(\Omega)},
      \end{align*}
      where the last estimate follows from~\eqref{eq:1} and Lemma~\ref{lem:dirichlet:var}.
      On the other hand, employing an appropriate cut-off function 
      and the multiplicative trace estimate of Lemma~\ref{lem:mult:trace} shows
      \begin{align*}
	\norm{u_h - \trace\,\vV}{L^2(B_{R_\tau}^0)} \lesssim \frac{1}{R_\tau} \norm{\lL u_h - \vV}{L^2_\alpha(B_{R_\tau})} +
	\norm{\nabla ( \lL u_h - \vV)}{L^2_\alpha(B_{R_\tau})} \lesssim \frac{R_\tau}{h} \enorm{\lL u_h -\vV}_{h,R_\tau}.
      \end{align*}
      Combining the last three chains of estimates, we get the desired result if we set $V_k := \trace\,\wat W_k$.
    \item \textbf{Condition~\eqref{lem:aprox:2:condition} is not satisfied with $R=R_\tau$:}
      We select $V_k := \left\{ v|_{B_{R_\tau}^0} \mid v \in \sS^1_0(\tT_h) \right\}$. The minimum in~\eqref{prop:main:eq}
      is then zero and
      \begin{align*}
	\dim V_k \lesssim \left( \frac{R_\tau}{h} \right)^d \leq \left( \frac{64 k \max\left\{ 1,C_{\rm app} \right\}}{\kappa q} \right)^d
	\lesssim k^d(1+\eta)^d q^{-d}.
      \end{align*}
  \end{enumerate}
\end{proof}
\begin{proof}[Proof of Theorem~\ref{thm:approx:blocks}]
  Suppose first that $C_{\rm dim}(2+\eta)^{d+1}q^{-(d+1)}k^{d+2} \geq \min\left\{ \abs{\tau},\abs{\sigma} \right\}$.
  In the case $\min\left\{ \abs{\tau},\abs{\sigma} \right\}=\abs{\tau}$ we set
  $\Xf_{\tau\sigma}=\If\in\R^{\abs{\tau}\times\abs{\tau}}$ and $\Yf_{\tau\sigma} = \Af^{-1}|_{\tau\times\sigma}^\top$.
  If $\min\left\{ \abs{\tau},\abs{\sigma} \right\}=\abs{\sigma}$, we set
  $\Xf_{\tau\sigma} = \Af^{-1}|_{\tau\times\sigma}$ and $\Yf_{\tau\sigma} = \If\in\R^{\abs{\sigma}\times\sigma}$.
  Now suppose that  $C_{\rm dim}(2+\eta)^{d+1}q^{-(d+1)}k^{d+2} < \min\left\{ \abs{\tau},\abs{\sigma} \right\}$.
  For a cluster $\tau\subset\Ii$ we define $\R^\tau := \left\{ \xx\in\R^N\mid \xx_j = 0\; \forall j\notin \tau  \right\}$.
  According to~\cite{ScottZ_90}, there exist linear functionals $\lambda_i$ such that $\lambda_i(\psi_j)=\delta_{ij}$,
  and
  \begin{align}\label{thm:approx:blocks:eq1}
    \norm{\lambda_i(w)\psi_i}{L_2(\Omega)} \lesssim \norm{w}{L_2(\supp(\psi_i))},
  \end{align}
  where the hidden constant depends only on the shape-regularity of $\tT_h$. Define
  \begin{align*}
  \Phi_\tau:
  \begin{cases}
    \R^\tau \rightarrow \sS^1_0(\tT_h)\\
    \xx \mapsto \sum_{j\in\tau}\xx_j\psi_j
  \end{cases}
  \quad\text{ and }\quad
  \Lambda_\tau:
  \begin{cases}
    L_2(\Omega)\rightarrow \R^\tau\\
    w\mapsto \ww
  \end{cases},
  \end{align*}
  where $\ww_j = \lambda_j(w)$ for $j\in\tau$ and $\ww_j=0$ else.
  Note that $h^{d/2}\norm{\xx}{2}\sim\norm{\Phi_\tau(\xx)}{L_2(\Omega)}$ for $\xx\in\R^\tau$,
  and that $\Phi_\tau\circ\Lambda_\tau$ is bounded in $L_2(\Omega)$.
  For $\Lambda_\Ii^\star$ the adjoint of $\Lambda_\Ii$, this implies $\norm{\Lambda_\Ii^\star}{\R^N\rightarrow L_2(\Omega)}\lesssim h^{-d/2}$.
  Let $V_k$ be the space of Proposition~\ref{prop:main}.
  We define the columns of $\Xf_{\tau\sigma}$ to be a orthogonal basis of the space $\left\{ \Lambda_\tau w \mid w\in V_k \right\}$
  and $\Yf_{\tau\sigma} := \Af^{-1}|_{\tau\times\sigma}^\top \Xf_{\tau\sigma}$. The ranks of $\Xf_{\tau\sigma}$ and $\Yf_{\tau\sigma}$
  are then bounded by $C_{\rm dim}(2+\eta)^{d+1}q^{-(d+1)}k^{d+2}$.
  Now, for $\bb\in\R^\sigma$, set $f := \Lambda_\Ii^\star\left( \bb \right)$. This yields $b_i = \vdual{f}{\psi_i}_{\Omega}$ and
  $\supp(f)\subset \overline{B_{R_\sigma}\cap\Omega}$. According to Proposition~\ref{prop:main}, there exists
  an element $v\in V_k$ such that
  $\norm{u_h - v}{L_2(B_{R_\tau}^0\cap\Omega)} \lesssim h^{-1}q^k \norm{f}{L_2(B_{R_\sigma}^0)}$.
  This implies
  \begin{align*}
    \norm{\Lambda_\tau u_h - \Lambda_\tau v}{2}
    &\lesssim h^{-d/2}\norm{\Phi_\tau\circ\Lambda_\tau(u_h-v)}{L_2(\Omega)}
    \lesssim h^{-d/2}\norm{u_h - v}{L_2(B_{R_\tau}^0\cap\Omega)}\\
    &\lesssim h^{-1-d/2}q^k \norm{f}{L_2(B_{R_\sigma}^0)} \lesssim h^{-1-d}q^k\norm{\bb}{2}.
  \end{align*}
  For $\zz := \Xf_{\tau\sigma}\Xf_{\tau\sigma}^\top\Lambda_\tau u_h$ it holds
  \begin{align*}
    \norm{\Lambda_\tau u_h - \zz}{2} \leq \norm{\Lambda_\tau u_h - \Lambda_\tau v}{2} \lesssim h^{-1-d}q^k\norm{\bb}{2}.
  \end{align*}
  As $\Lambda_\tau u_h = \Af^{-1}|_{\tau\times\sigma}\bb|_\sigma$, we obtain
  \begin{align*}
    \norm{(\Af^{-1}|_{\tau\times\sigma} - \Xf_{\tau\sigma}\Yf_{\tau\sigma}^\top)\bb|_\sigma}{2} \lesssim N^{\frac{1+d}{d}} q^k \norm{\bb}{2}.
  \end{align*}
  As $\bb\in\R^\sigma$ was arbitrary, the result follows.
\end{proof}
\section{Numerical experiments}
We provide numerical experiments in two space dimensions, i.e.,  $d=2$, that confirm 
our theoretical findings. The indices $\Ii$ of the standard basis of the space $S^1_0(\tT_h)$ 
based on a quasiuniform triangulation of $\Omega$, are organized in a cluster tree
$\mathbb{T}_{\Ii}$ that is obtained by 
a geometric clustering, i.e., bounding boxes are split 
in half perpendicular to their longest edge until the corresponding clusters are smaller than $n_{\rm leaf}=20$. The block cluster tree is based on that cluster tree using 
the admissibility parameter $\eta=2$. 
In order to calculate a blockwise rank-$r$ approximation $\Bf_{\hH}^r$ of $\Af^{-1}$, we compute
the densely populated system matrix $\Af$ using the MATLAB code presented in~\cite{AcostaBB_17}.
On admissible cluster pairs, we compute a rank-$r$ approximation of the corresponding matrix block by singular
value decomposition. We carried out experiments for $s\in\left\{ 0.25, 0.5, 0.75 \right\}$ on a square and an L-shaped domain.
On the square, we use a coarse mesh of 2674 elements, resulting in 358 admissible and 591 non-admissible blocks,
and a fine mesh of 17130 elements, resulting in 5234 admissible and 5486 non-admissible blocks. 
On the L-shaped domain,
we use a mesh of 6560 elements, resulting in 640 admissible and 1332 non-admissible blocks.
Note that for a fixed mesh and cluster tree, Theorem~\ref{th:Happrox} predicts $\norm{\Af^{-1}-\Bf_{\hH}^r}{2}\lesssim e^{-br^{1/4}}$.
However, in our experiments we observe that the error behaves like $\norm{\Af^{-1}-\Bf_{\hH}^r}{2} \sim e^{-10 r^{1/3}}$.
Hence, we will plot the error logarithmically over the third root of the block rank $r$, and include
the reference curve $e^{-10 r^{1/3}}$.
\begin{figure}[htb]
  \centering
  \psfrag{err}{$\norm{\Af^{-1}-\Bf_{\hH}^r}{2}$}
  \psfrag{ref}{$\exp(-10r^{1/3})$}
  \includegraphics[width=\textwidth]{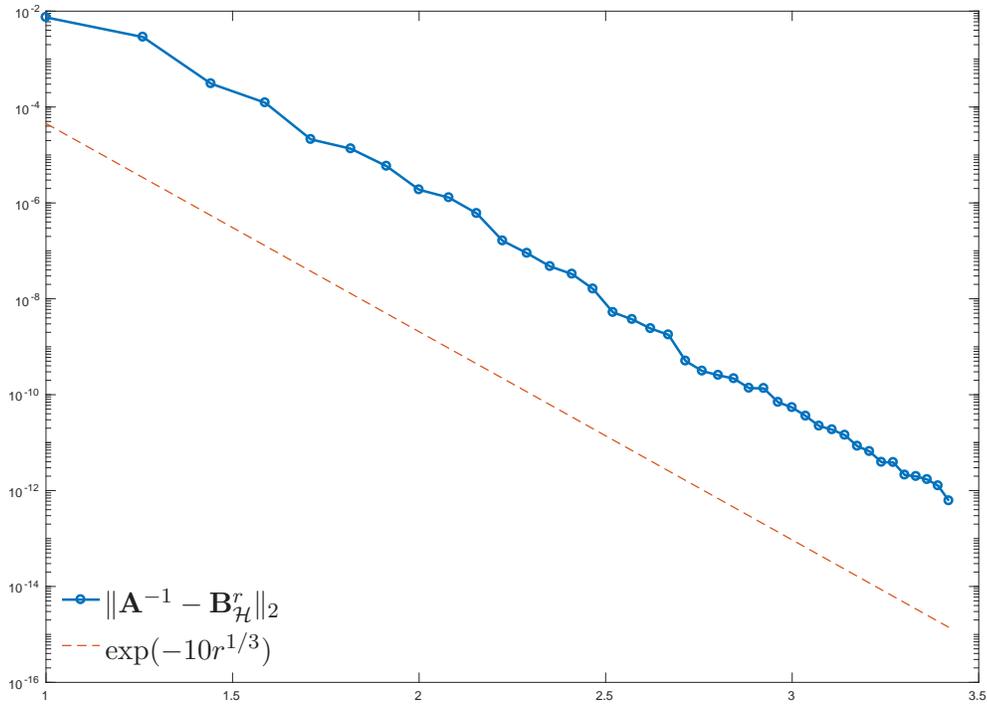}
  \caption{Square domain, $s=0.25$, 2674 elements.}
  \label{sc025}
\end{figure}
\begin{figure}[htb]
  \centering
  \psfrag{err}{$\norm{\Af^{-1}-\Bf_{\hH}^r}{2}$}
  \psfrag{ref}{$\exp(-10r^{1/3})$}
  \includegraphics[width=\textwidth]{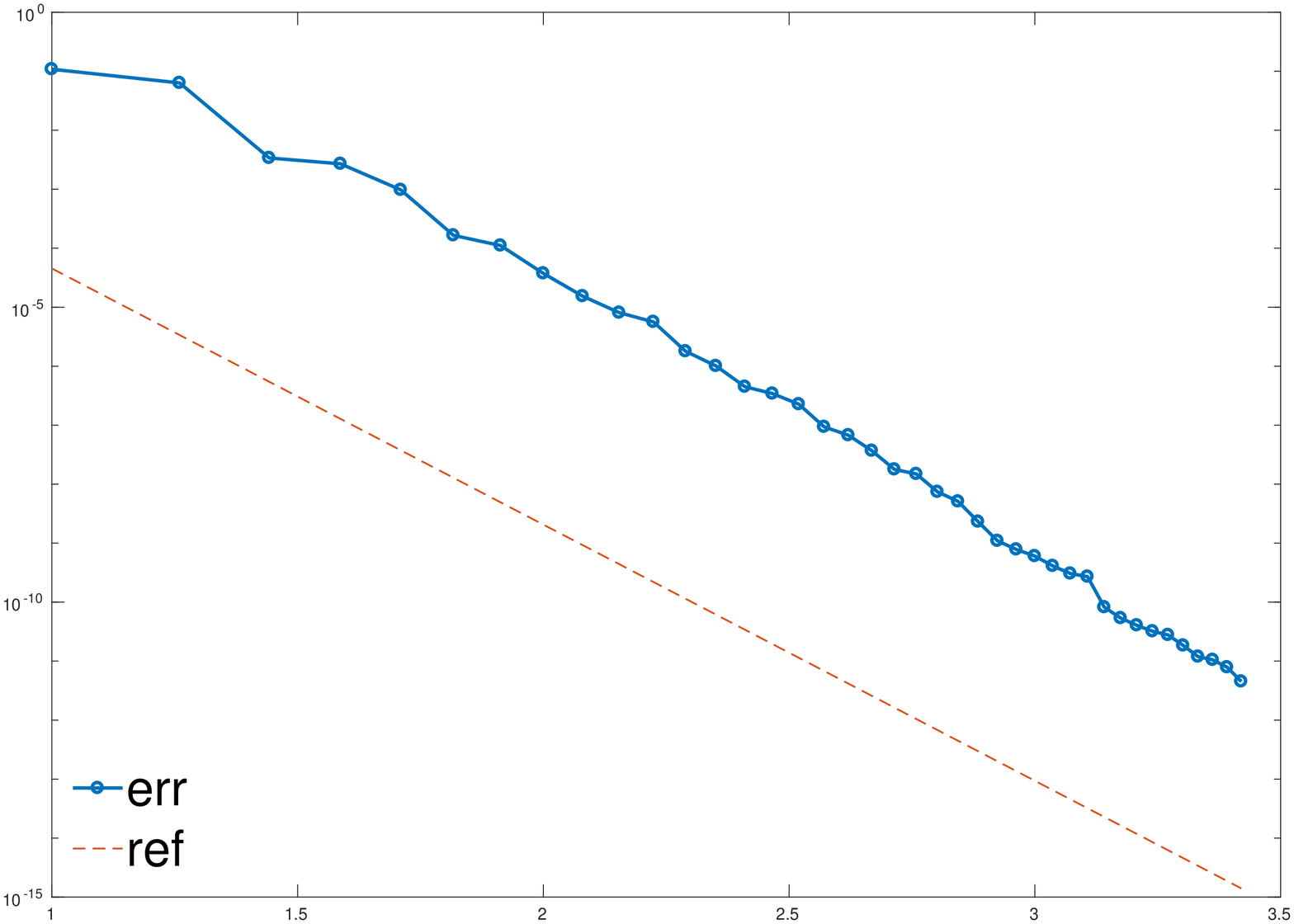}
  \caption{Square domain, $s=0.25$, 17130 elements.}
  \label{sf025}
\end{figure}
\begin{figure}[htb]
  \centering
  \psfrag{err}{$\norm{\Af^{-1}-\Bf_{\hH}^r}{2}$}
  \psfrag{ref}{$\exp(-10r^{1/3})$}
  \includegraphics[width=\textwidth]{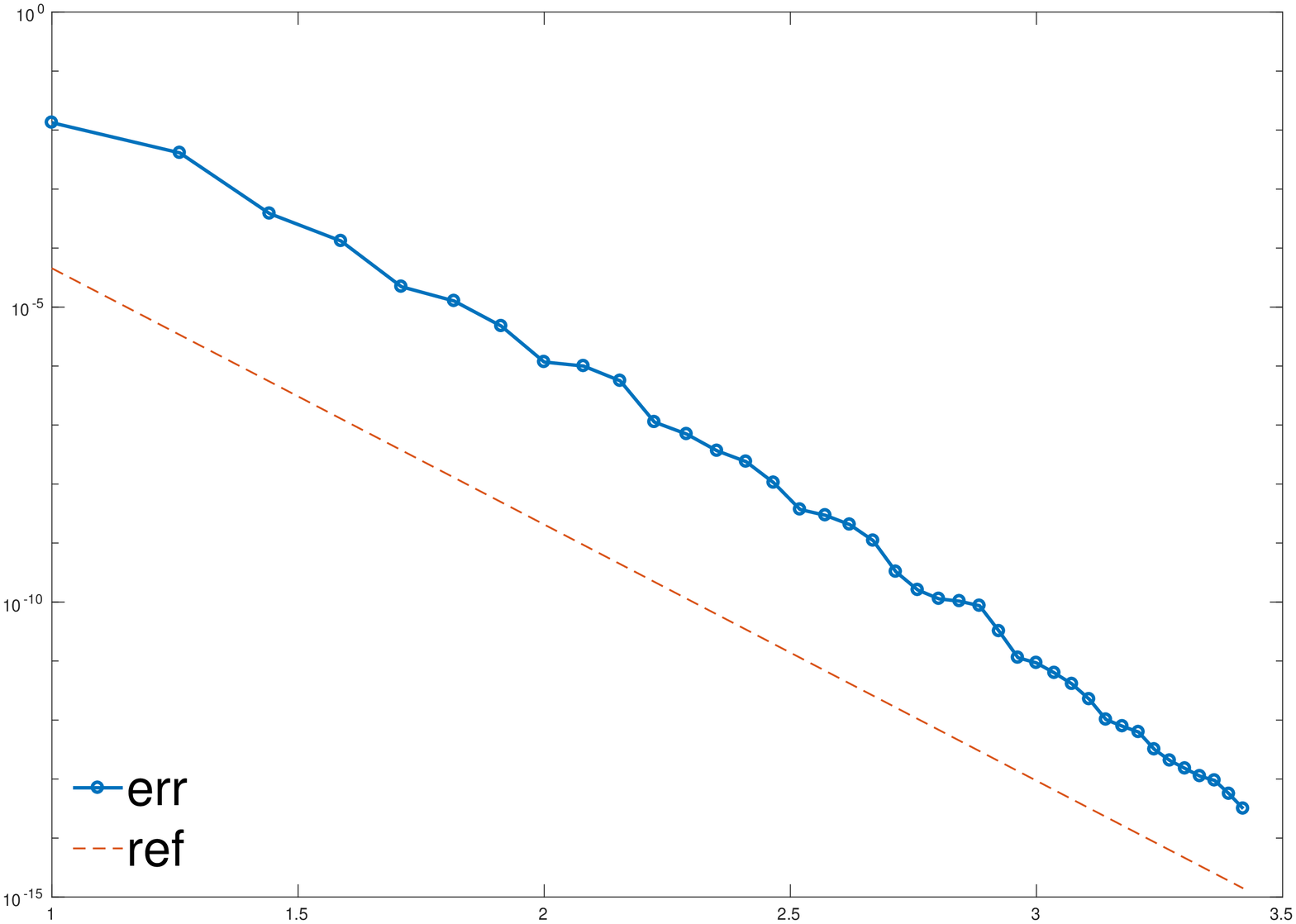}
  \caption{Square domain, $s=0.5$, 2674 elements.}
  \label{sc05}
\end{figure}
\begin{figure}[htb]
  \centering
  \psfrag{err}{$\norm{\Af^{-1}-\Bf_{\hH}^r}{2}$}
  \psfrag{ref}{$\exp(-10r^{1/3})$}
  \includegraphics[width=\textwidth]{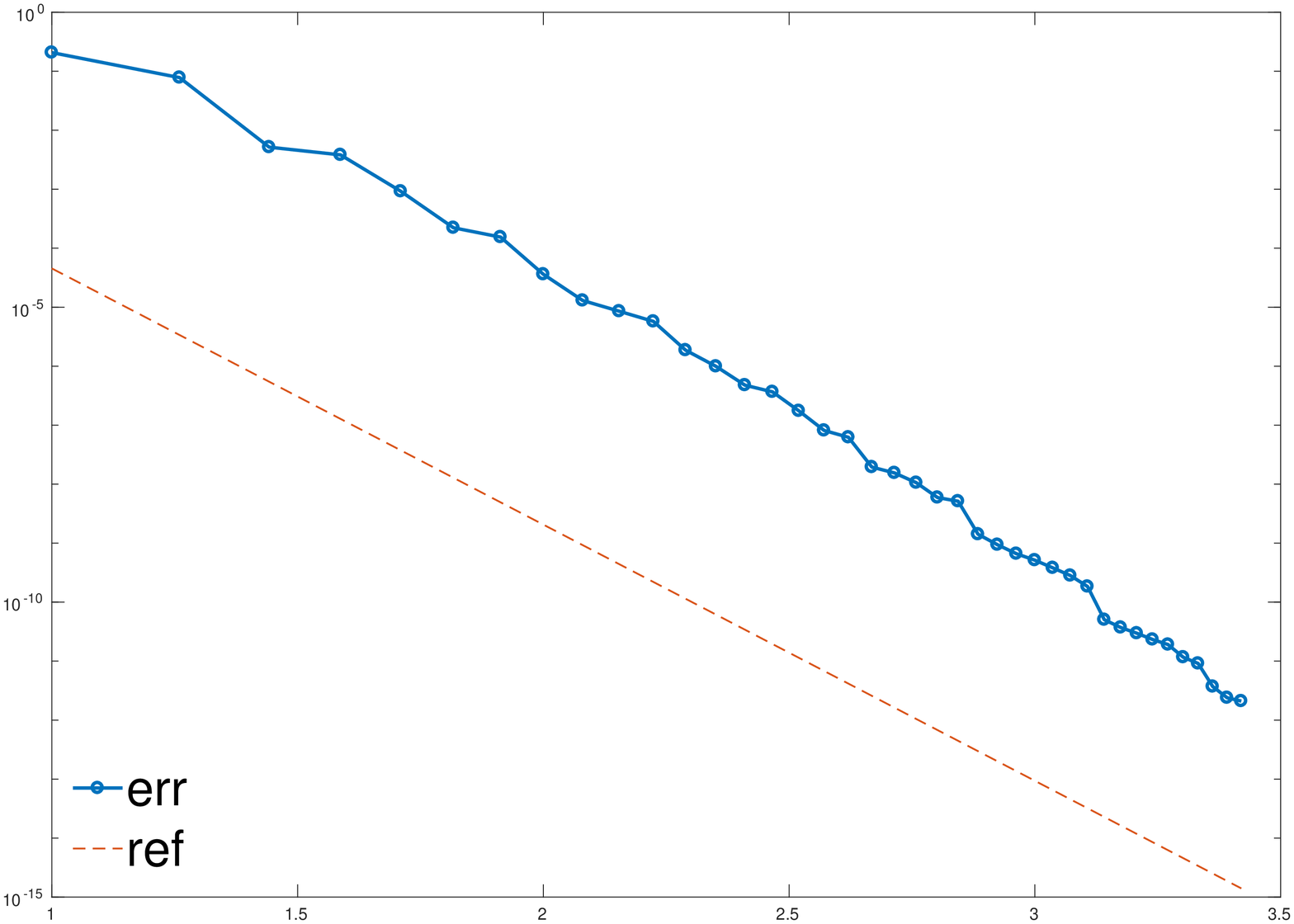}
  \caption{Square domain, $s=0.5$, 17130 elements.}
  \label{sf05}
\end{figure}
\begin{figure}[htb]
  \centering
  \psfrag{err}{$\norm{\Af^{-1}-\Bf_{\hH}^r}{2}$}
  \psfrag{ref}{$\exp(-10r^{1/3})$}
  \includegraphics[width=\textwidth]{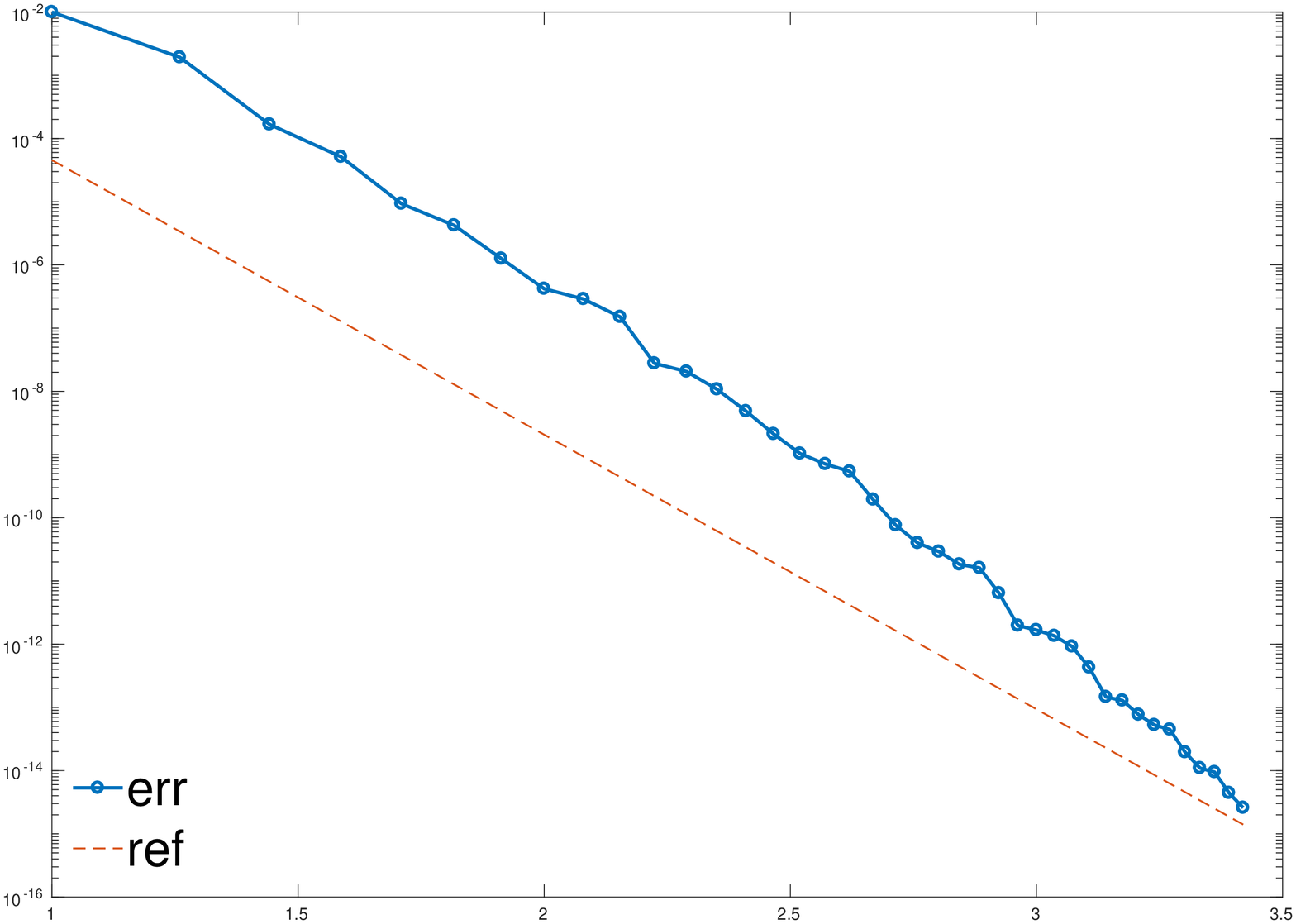}
  \caption{Square domain, $s=0.75$, 2674 elements.}
  \label{sc075}
\end{figure}
\begin{figure}[htb]
  \centering
  \psfrag{err}{$\norm{\Af^{-1}-\Bf_{\hH}^r}{2}$}
  \psfrag{ref}{$\exp(-10r^{1/3})$}
  \includegraphics[width=\textwidth]{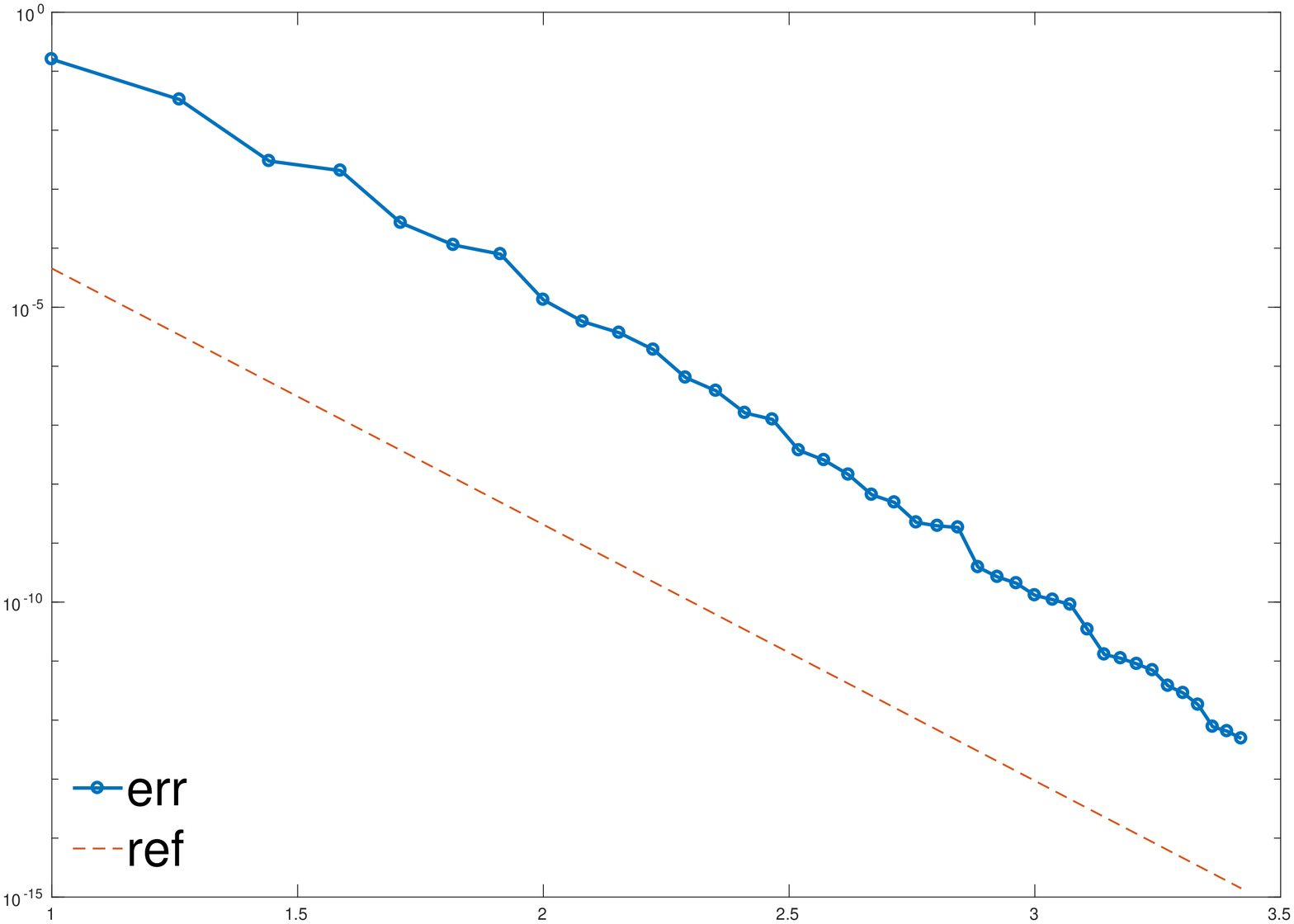}
  \caption{Square domain, $s=0.75$, 17130 elements.}
  \label{sf075}
\end{figure}
\begin{figure}[htb]
  \centering
  \psfrag{err}{$\norm{\Af^{-1}-\Bf_{\hH}^r}{2}$}
  \psfrag{ref}{$\exp(-10r^{1/3})$}
  \includegraphics[width=\textwidth]{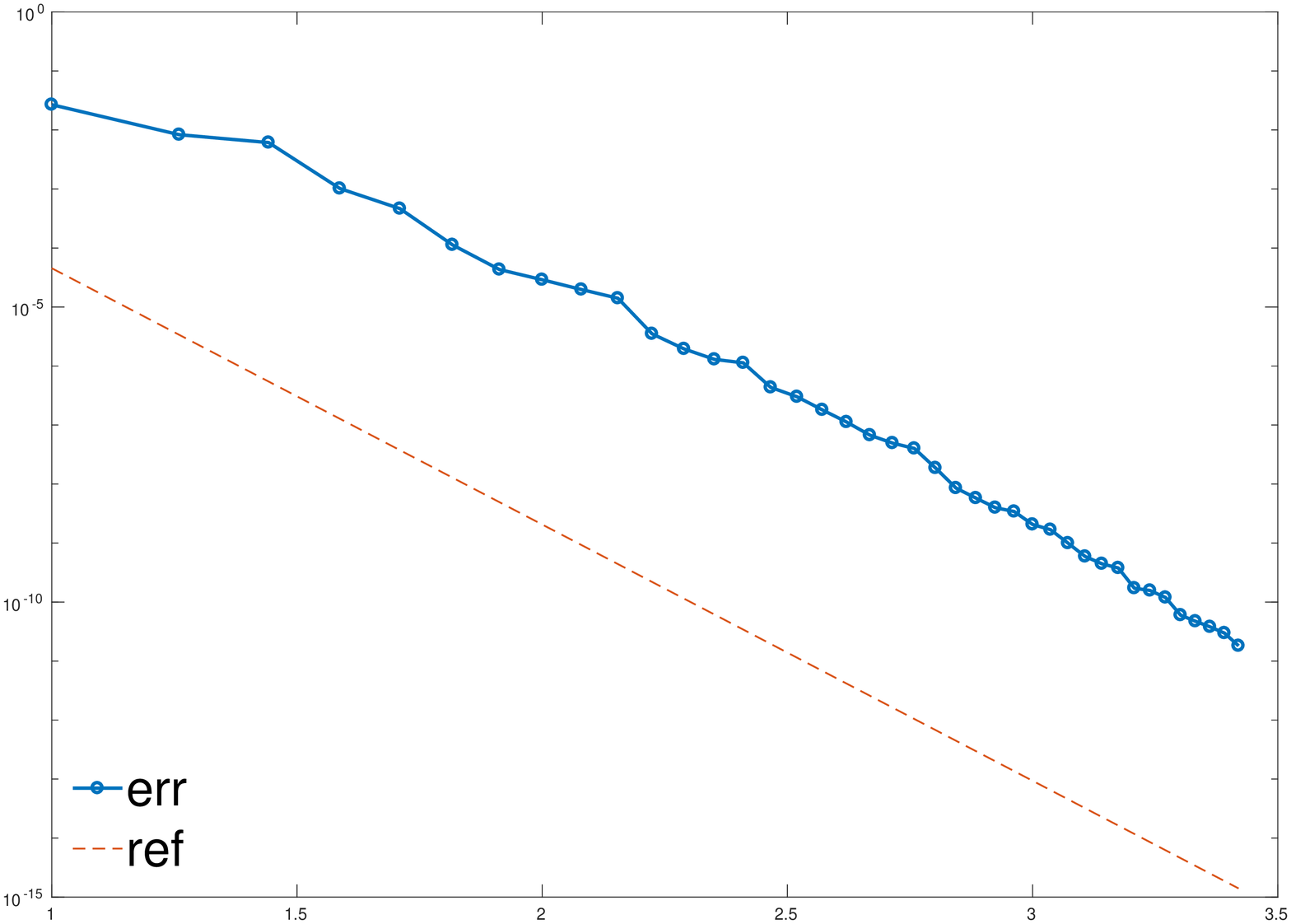}
  \caption{L-shaped domain, $s=0.25$, 6560 elements.}
  \label{l025}
\end{figure}
\begin{figure}[htb]
  \centering
  \psfrag{err}{$\norm{\Af^{-1}-\Bf_{\hH}^r}{2}$}
  \psfrag{ref}{$\exp(-10r^{1/3})$}
  \includegraphics[width=\textwidth]{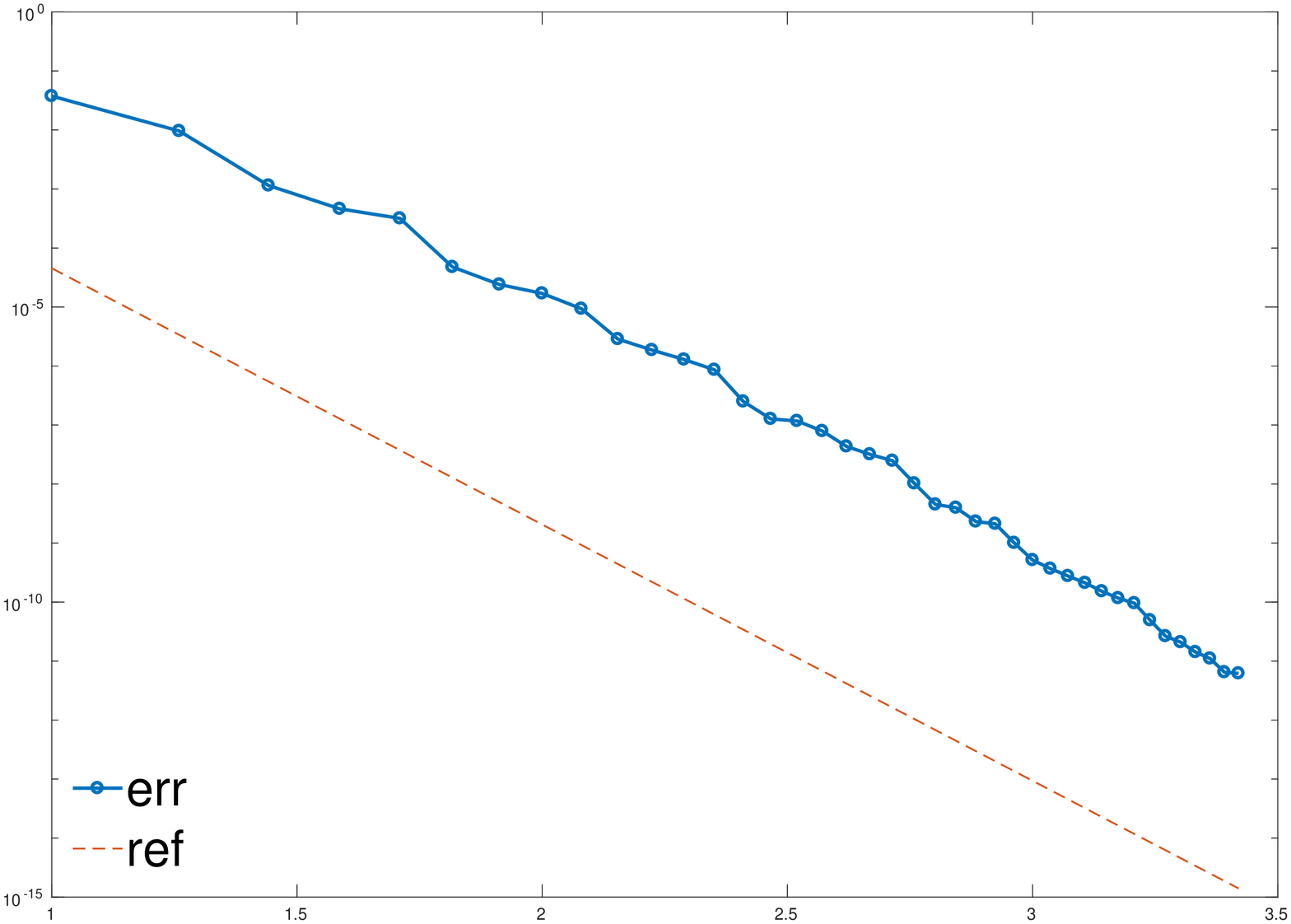}
  \caption{L-shaped domain, $s=0.5$, 6560 elements.}
  \label{l05}
\end{figure}
\begin{figure}[htb]
  \centering
  \psfrag{err}{$\norm{\Af^{-1}-\Bf_{\hH}^r}{2}$}
  \psfrag{ref}{$\exp(-10r^{1/3})$}
  \includegraphics[width=\textwidth]{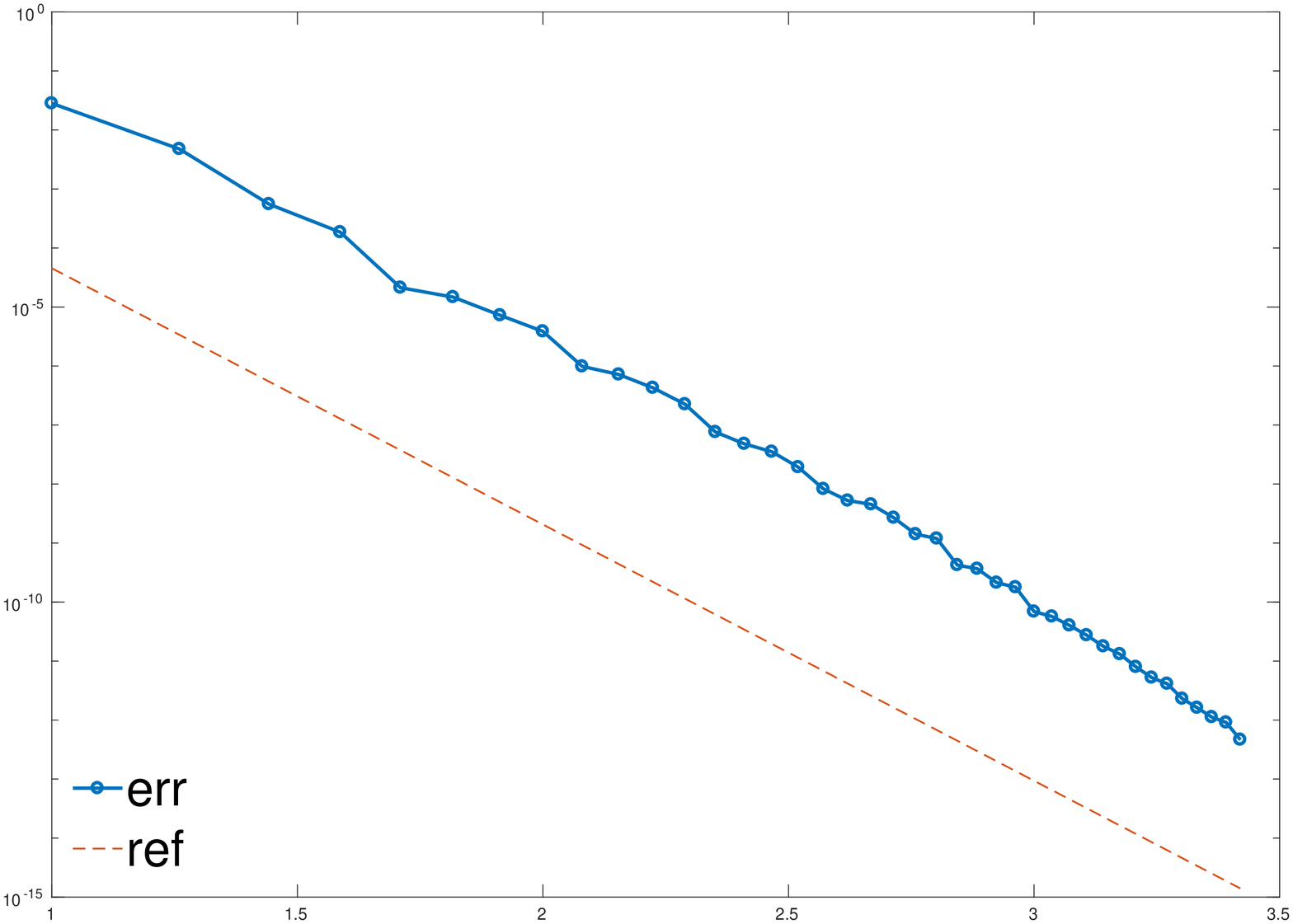}
  \caption{L-shaped domain, $s=0.75$, 6560 elements.}
  \label{l075}
\end{figure}

\section{Conclusions and extensions}
We have shown that the inverse $\Af^{-1}$ of the stiffness matrix $\Af$ of a 
Galerkin discretization of the fractional Laplacian can be approximated
at an exponential rate in the block rank by $\calH$-matrices, using the 
standard admissibility criterion \eqref{eq:admissiblity}. 
The following extensions are possible:
\begin{itemize}
\item We restricted our analysis to 
the discretization by piecewise linears. However, the analysis generalizes
to approximation by piecewise polynomials of fixed degree $p$. 
\item 
We focussed on the approximability of $\Af^{-1}$
in the $\calH$-format. Computationally attractive are also factorizations
such as $\calH$-LU or $\calH$-Cholesky factorizations. The ability to find
an approximate $\Af \approx L_{\calH} U_{\calH}$ has been shown 
for (classical) FEM discretizations in \cite{Bebendorf07,FMP_NumerMath_15}
and for non-local BEM matrices in \cite{faustmann-melenk-praetorius15b,faustmann-melenk-praetorius15c} with techniques that generalize to the present 
case of the fractional Laplacian. 
\item Related to $\calH$-matrices is the format
of $\calH^2$-matrices discussed in  
\cite{HackbuschKhoromskijSauter,BoermBuch,hackbusch15,Boerm}. 
Using the techniques employed 
in \cite{Boerm,FMP_NumerMath_15,faustmann-melenk-praetorius15b,faustmann-melenk-praetorius15c}, one may also show that $\Af^{-1}$ can be approximated 
by $\calH^2$-matrices   at an exponential rate in the block rank. 
\end{itemize}
\bibliographystyle{abbrv}
\bibliography{literature}
\end{document}